\newcommand{\ds}{\displaystyle}
\newtheorem{theorem}{Theorem}[section]
\newtheorem{lemma}[theorem]{Lemma}
\newtheorem{proposition}[theorem]{Proposition}
\newtheorem{corollary}[theorem]{Corollary}
\theoremstyle{definition}
\newtheorem{definition}[theorem]{Definition}
\newtheorem{remark}{Remark}
\numberwithin{equation}{section}
\newtheorem{example}{Example}
\DeclareMathOperator{\vol}{vol}
\DeclareMathOperator{\loc}{loc}
\DeclareMathOperator{\di}{div}
\DeclareMathOperator{\lo}{loc}
\DeclareMathOperator{\rr}{\mathfrak{r}}
\DeclareMathOperator{\ttt}{\mathfrak{t}}
\DeclareMathOperator{\yyy}{\mathfrak{y}}
\author{Wei Zhao}
\address{
Department of Mathematics\\
East China University of Science and Technology\\
200237 Shanghai, China}
\email{szhao\underline{ }wei@yahoo.com}
\keywords{Hardy inequality, best constant, Finsler manifold, Riemannian manifold, metric measure manifold, $p$-Laplacian, subharmonic function}
\subjclass[2010]{Primary  26D10, Secondary 53C60, 53C23}
\begin{document}

\title[Hardy inequalities with best constants on Finsler metric measure manifolds]{Hardy inequalities with best constants on Finsler metric measure manifolds}

\begin{abstract}
The paper is devoted to weighted $L^p$-Hardy inequalities with best constants on
Finsler metric measure manifolds. There are two major ingredients. The first, which is the main
part of this paper, is the Hardy inequalities concerned with distance functions in the Finsler setting. In
this case, we find that besides the flag
curvature, the Ricci curvature together with two non-Riemannian
quantities, i.e., reversibility and S-curvature, also play an important role. And we establish the optimal Hardy
inequalities  not only on noncompact manifolds, but also on closed manifolds. The second ingredient
is the Hardy inequalities for Finsler $p$-sub/superharmonic functions, in which we also investigate
the existence of extremals and the Brezis-V\'azquez improvement.
\end{abstract}
\maketitle

\section{Introduction} \label{sect1}

The classical Hardy inequality states that for any $p>1$,
\begin{align*}
\int_{\mathbb{R}^n} |\nabla u|^pdx\geq \left(\frac{|n-p|}{p}\right)^p\int_{\mathbb{R}^n} \frac{|u|^p}{|x|^p}dx,\ \forall\,u\in C^\infty_0(\mathbb{R}^n\backslash\{\mathbf{0}\}),\tag{1.1}\label{1.1newone}
\end{align*}
where
  $\left(\frac{|n-p|}{p}\right)^p$ is sharp (see for instance   Hardy et al. \cite{HPL}).
It is well-known that Hardy inequalities play a prominent role in the theory of linear and nonlinear partial differential equations. For example, they are useful to investigate the stability of solutions of semilinear elliptic and parabolic equations, the existence and asymptotic behavior of the heat equations and the stability of eigenvalues in elliptic problems. See e.g. \cite{BCC,BV,CM,DA,PV,V,VZ} and references therein.

 In recent years, a great deal of  effort  has been
devoted to the study of Hardy inequalities   in {\it curved spaces}.  As far as we know, Carron \cite{Ca} was the first who studied weighted $L^2$-Hardy inequalities  on complete, non-compact Riemannian manifolds. On one hand, inspired by \cite{Ca}, a systematic study of the Hardy inequality is carried out by Berchio,  Ganguly and  Grillo \cite{BGD}, D'Ambrosio and Dipierro \cite{DD},   Kombe and \"Ozaydin \cite{KO,KO2}, Yang, Su and Kong \cite{YSK} in the Riemannian setting (where the canonical Riemannian measure is used). On the other hand, Krist\'aly and Repov\v s  \cite{KR}, Krist\'aly and Szak\'al \cite{KS}   and Yuan, Zhao and Shen \cite{YZS} studied quantitative Hardy inequalities on Finsler manifolds with vanishing S-curvature while Mercaldo, Sano and Takahshi \cite{MST}  investigated $L^p$-Hardy inequalities in reversible Minkowski spaces (where the Busemann-Hausdorff measure is used).

In this paper, a {\it  Finsler metric measure manifold} is a Finsler manifold equipped with a smooth measure. Thus,
all the aforementioned spaces are special cases of Finsler metric measure manifolds.
However,  up to now,
limited work has been done in the study of Hardy inequalities on general Finsler metric measure manifolds. A key issue is that
 two \textit{non-Riemannian quantities}  have a great  effect on Hardy inequalities in such a setting, as
\begin{enumerate}
	\item[$\bullet$] reversibility;
		\item[$\bullet$] $S$-curvature induced by the given measure.
\end{enumerate}
 In order to emphasize the  influence, we
present a simple example.
   \begin{example}[\cite{HKZ,KRudas}]\label{Funkexam}
Consider {\it the Funk metric measure manifold} $(M,F, d\mathfrak{m}_{BH})$, where $M:=\mathbb{B}_{\mathbf{0}}(1)$ is the unit ball centered at $\mathbf{0}$ in $\mathbb{R}^n$,
\[
F(x,y):=\frac{\sqrt{|y|^2-(|x|^2|y|^2-\langle x,y\rangle^2)}}{1-|x|^2}+\frac{\langle x,y\rangle}{1-|x|^2},
\]
and $d\mathfrak{m}_{BH}$ is the Busemann-Hausdorff measure.

In this case, the Euclidean quantities $|\nabla u(x)|$, $|x|$ and $dx$ from (\ref{1.1newone}) are naturally replaced by the co-Finslerian norm of the differential $F^*(du)$, the Finsler distance function $d_F(\mathbf{0},x)$, and the measure $d\mathfrak{m}_{BH}$, respectively.
 In spite of the fact that $(M,F)$ is simply connected, forward complete and has constant flag curvature $-\frac{1}{4}$, the Hardy inequality {\it fails}:
\[
\inf_{u\in C^\infty_0(M)\backslash\{0\}}\frac{\ds \int_{M}F^{*2}{(du)}d\mathfrak{m}_{BH} }{\ds \int_{M}\frac{u^2}{d_F^2(\mathbf{0},x)}d\mathfrak{m}_{BH}}=0.\tag{1.2}\label{hard1.3funk}
\]
We remark that
$(M,F, d\mathfrak{m}_{BH})$ has \textit{infinite} reversibility and \textit{non-vanishing} $S$-curvature.
\end{example}

The purpose of this paper is  to investigate weighted $L^p$-Hardy inequalities with best constants on general Finsler metric measure manifolds.
In order to state our main results,  we introduce and recall some notations  (for details, see Section \ref{sect3}). A triple $(M,F,d\mathfrak{m})$ always denotes a  Finsler metric measure manifold, i.e.,  $(M,F)$ is a Finsler manifold endowed with a smooth measure $d\mathfrak{m}$.
Given a Finsler metric measure manifold $(M,F,d\mathfrak{m})$, the  {\it reversibility}, introduced by Rademacher \cite{R}, is defined as
\[
\lambda_F:=\sup_{x\in M} \lambda_F(x),\ \ {\rm where}\ \ \lambda_F(x)= \sup_{y\in T_xM\setminus\{0\}} \frac{F(x,-y)}{F(x,y)}.
\]
Obviously, $\lambda_F\geq 1$ with equality if and only if $F$ is {\it reversible} (i.e., symmetric). Riemannian metrics are always reversible, but there are infinitely many non-reversible Finsler metrics (e.g. see Example \ref{Funkexam}).   Furthermore, the distance function $d_F$ induced by $F$  is usually asymmetric (i.e., $d_F(x_1,x_2)\neq d_F(x_2,x_1)$) unless $\lambda_F=1$. Given a point $o\in M$,  we use the following notations throughout this paper:
\[
r_+(x):=d_F(o,x),\ r_-(x):=d_F(x,o).
\]

Since there is no canonical measure on a Finsler manifold,
 various measures can be introduced whose behavior may be genuinely different. 
A measure $d\mathfrak{m}$ induces
   two further  geometric quantities $\tau$ and $\mathbf{S}$,  see Shen \cite{Sh1}, which are the so-called  {\it distortion}  and {\it S-curvature}, respectively. More precisely, if $d\mathfrak{m}:=\sigma(x)dx^1\wedge...\wedge dx^n$ in some local coordinate $(x^i)$, for any $y\in T_xM\backslash\{0\}$, let
\begin{equation*}
\tau(y):=\log \frac{\sqrt{\det g_{ij}(x,y)}}{\sigma(x)},\ \ \ \  \mathbf{S}(y):=\left.\frac{d}{dt}\right|_{t=0}[\tau(\dot{\gamma}_y(t))],
\end{equation*}
where $g_y=(g_{ij}(x,y))$ is the fundamental tensor induced by $F$ and $t\mapsto \gamma_y(t)$ is the geodesic starting at $x\in M$ with $\dot{\gamma}_y(0)=y\in T_xM$. Although both the distortion and the S-curvature vanish on every Riemannian manifold   endowed with the canonical Riemannian measure,
these two quantities have already appeared in {\it Riemannian metric measure manifolds}, just in different forms.
\begin{example}\label{seconex}
Let $(M,g,e^{-f}d\vol_g)$ denote a Riemannian metric measure manifold, i.e., $(M,g)$ is a Riemannian manifold,  $f\in C^\infty(M)$ and $d\vol_g$ is the canonical Riemannian measure. Note that  $(M,g,e^{-f}d\vol_g)$ can be viewed as a Finsler metric measure space $(M,\sqrt{g},e^{-f}d\vol_g)$. Therefore,
for any $x\in M$, one has
\[
\tau(y)=f(x),\ \mathbf{S}(y)=g(y,\nabla f),\ \forall\,y\in T_xM.
\]
Thus, the S-curvature does not vanish unless $f$ is a constant.

Let $r(\cdot)$ be the distance function from a fixed point $o\in M$ induced by $g$. Then
the assumption  that {\it $\partial_rf\geq 0$ along all the minimal geodesics from $o$} is equivalent to  that {\it the S-curvature is nonnegative along all minimal geodesics  from  $o$}.
Clearly, the  Euclidean space equipped with the Gaussian measure $\left(\mathbb{R}^n,(\cdot,\cdot),\frac{1}{\sqrt{2\pi}^n}e^{-\frac12\|x\|^2}dx\right)$ satisfies this assumption if $o$ is the origin.
\end{example}

  The S-curvature  must vanish  if it is  non-positive (or nonnegative) on a reversible Finsler metric measure manifold. Hence, $\mathbf{S}\geq 0$ (or $\mathbf{S}\leq0$) is a strong condition.
   Inspired by Example \ref{seconex}, we introduce
a weaker assumption: given a point $o\in M$,
we say
 $\mathbf{S}^+_o\geq 0$ (resp., $\mathbf{S}^-_o\geq 0$) if the S-curvature is nonnegative   along all minimal geodesics {\it from} (resp., {\it to}) $o$.
And   $\mathbf{S}^\pm_o\leq 0$ are defined similarly. We remark that $\mathbf{S}^+_o\leq 0$ is not equivalent to $\mathbf{S}^-_o\geq 0$  in the irreversible case. For instance,
 the Funk metric measure manifold  in Example \ref{Funkexam} satisfies $\mathbf{S}^+_o=\mathbf{S}^-_o=\frac{n+1}2$ for every point $o\in M$.

In Finsler geometry
the \textit{flag curvature} is a geometric quantity analogous to the sectional curvature.
Let $P:=\text{Span}\{y,v\}\subset T_xM$ be a plane. The {flag curvature} is defined by
\[
\mathbf{K}(y,v):=\frac{g_y\left( R_y(v),v  \right)}{g_y(y,y)g_y(v,v)-g^2_y(y,v)},
\]
where $R_y$ is the Riemannian curvature  of $F$.   A Finsler metric measure manifold $(M,F,d\mathfrak{m})$ is called a \textit{Cartan-Hadamard measure manifold} if $(M,F)$ is a simply connected forward complete Finsler manifold  with $\mathbf{K}\leq 0.$

Our first result reads as follows.

\begin{theorem}\label{nonpositflag2}Let $(M,F,d\mathfrak{m})$ be an $n$-dimensional Cartan-Hadamard measure manifold.  Given $o\in M$, set $r_-(x):=d_F(x,o)$.  If $\mathbf{S}^-_o\geq 0$, then for any $p,\beta \in \mathbb{R}$ with $p\in (1,n)$ and $\beta>-n$,
we have
\begin{align*}
\int_M r_-^{\beta+p}\max\{F^{*p}(\pm du)\}d \mathfrak{m}\geq \left(\frac{n+\beta}{p}\right)^p\int_M r_-^\beta |u|^p d\mathfrak{m},\ \forall\,u\in C^\infty_0(M).
\end{align*}
 In particular, the constant $ \left( \frac{n+\beta}{p} \right)^p$  is sharp if $\lambda_F=1$.
\end{theorem}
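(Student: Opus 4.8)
The plan is to run the classical \emph{vector field (divergence) method}, adapted to the Finsler setting by passing to the reverse metric in order to accommodate $r_-$. Since $(M,F)$ is Cartan--Hadamard, so is its reverse metric $\overleftarrow{F}(x,y):=F(x,-y)$, and $r_-(x)=d_F(x,o)$ is precisely the \emph{forward} distance from $o$ measured by $\overleftarrow{F}$; in particular $r_-$ is smooth on $M\setminus\{o\}$ (no cut locus), one has $\overleftarrow{F}^*(dr_-)=F^*(-dr_-)=1$, and the $\overleftarrow{F}$-gradient $\overleftarrow{\nabla}r_-$ is the unit radial field. The first step is a \emph{Laplacian comparison}: using $\mathbf{K}\le 0$ together with $\mathbf{S}^-_o\ge 0$, I would show
\begin{equation*}
\overleftarrow{\Delta}\,r_-\ \ge\ \frac{n-1}{r_-}\quad\text{on }M\setminus\{o\},
\end{equation*}
where $\overleftarrow{\Delta}=\di(\overleftarrow{\nabla}\,\cdot\,)$ is the Finsler Laplacian of $\overleftarrow{F}$ with respect to $d\mathfrak{m}$. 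The flag-curvature bound forces the metric (Busemann--Hausdorff) part of the mean curvature of geodesic spheres to be at least $(n-1)/r_-$, while the measure-dependent correction equals $-\overleftarrow{\mathbf{S}}(\overleftarrow{\nabla}r_-)$; the hypothesis $\mathbf{S}^-_o\ge 0$ is designed so that this correction is nonnegative.

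Second, with $\gamma:=\beta+1$ I would test against the vector field $V:=r_-^{\gamma}\,\overleftarrow{\nabla}r_-$. Using $\di(fV)=f\,\di V+df(V)$, the identity $dr_-(\overleftarrow{\nabla}r_-)=\overleftarrow{F}^{*2}(dr_-)=1$, and the comparison above,
\begin{equation*}
\di V=r_-^{\gamma}\,\overleftarrow{\Delta}r_-+\gamma\,r_-^{\gamma-1}\ \ge\ (n-1+\gamma)\,r_-^{\gamma-1}=(n+\beta)\,r_-^{\beta}.
\end{equation*}
Multiplying by $|u|^p$ and integrating by parts (applying the divergence theorem on the complement of a small forward $\overleftarrow{F}$-ball around $o$ and letting its radius $\varepsilon\to0$; the boundary term is $O(\varepsilon^{\,n+\beta})\to0$ precisely because $\beta>-n$), I obtain
\begin{equation*}
(n+\beta)\int_M r_-^{\beta}|u|^p\,d\mathfrak{m}\le -p\int_M|u|^{p-2}u\,r_-^{\gamma}\,du(\overleftarrow{\nabla}r_-)\,d\mathfrak{m}.
\end{equation*}

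Third, the key pointwise bound comes from the fundamental inequality $\xi(W)\le F^*(\xi)F(W)$ applied in $\overleftarrow{F}$: since $\overleftarrow{F}(\overleftarrow{\nabla}r_-)=1$ and $\overleftarrow{F}^*(\pm du)=F^*(\mp du)$, one gets $|du(\overleftarrow{\nabla}r_-)|\le\max\{F^*(du),F^*(-du)\}$, which is exactly where the term $\max\{F^{*p}(\pm du)\}$ enters. Inserting this and applying H\"older's inequality with exponents $p/(p-1)$ and $p$ (splitting $r_-^{\gamma}=r_-^{(\gamma-1)(p-1)/p}\cdot r_-^{(\gamma+p-1)/p}$) yields
\begin{equation*}
(n+\beta)\int_M r_-^{\beta}|u|^p\,d\mathfrak{m}\le p\Big(\int_M r_-^{\beta}|u|^p\,d\mathfrak{m}\Big)^{\frac{p-1}{p}}\Big(\int_M r_-^{\beta+p}\max\{F^{*p}(\pm du)\}\,d\mathfrak{m}\Big)^{\frac1p},
\end{equation*}
and dividing and raising to the $p$-th power gives the asserted inequality with constant $\big((n+\beta)/p\big)^p$. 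Finally, for sharpness when $\lambda_F=1$ (so $F=\overleftarrow{F}$, $r_-=r_+=:r$, and the $\max$ collapses to $F^{*p}(du)$), I would exhibit radial test functions concentrating at $o$ that mimic the virtual extremal $r^{-(n+\beta)/p}$: truncating it between two scales and using that $(M,F)$ is infinitesimally a reversible Minkowski space at $o$, the Rayleigh quotient tends to $\big((n+\beta)/p\big)^p$, the exponent $(n+\beta)/p$ being critical exactly because the convergence of $\int r^{\beta-ap}\,d\mathfrak{m}$ switches there, with $p<n$ ensuring the relevant range (e.g. the classical case $\beta=-p$). I expect the \textbf{main obstacle} to be the Laplacian comparison of the first step: correctly passing to $\overleftarrow{F}$ and verifying that $\mathbf{S}^-_o\ge0$ makes the non-Riemannian S-curvature correction $-\overleftarrow{\mathbf{S}}(\overleftarrow{\nabla}r_-)$ have the right sign, since without this the inequality can fail outright, as Example \ref{Funkexam} shows.
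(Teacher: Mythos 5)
Your proposal is correct and follows essentially the same route as the paper: the Laplacian comparison $-\Delta(-r_-)\ge (n-1)/r_-$ obtained by passing to the reverse metric (Lemma \ref{flagcurLa}), the divergence/vector-field argument with $X$ proportional to $r_-^{\beta+1}\nabla(-r_-)=-r_-^{\beta+1}\overleftarrow{\nabla}r_-$ combined with the duality bound $|du(\overleftarrow{\nabla}r_-)|\le\max\{F^{*}(\pm du)\}$ and H\"older (Theorem \ref{divlemf} and Lemma \ref{mainlemmforcr}(ii)), and sharpness via test functions concentrating at $o$ built from the virtual extremal $r^{-(n+\beta)/p}$ together with the divergence of $\int r^{-n}\,d\mathfrak{m}$ near $o$ (Lemma \ref{centerinteg}). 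The only cosmetic difference is that you integrate by parts on $M\setminus \overleftarrow{B}{}^+_o(\varepsilon)$ and kill the $O(\varepsilon^{n+\beta})$ boundary term, whereas the paper regularizes with $r+\epsilon$ and works in the weak formulation of $-c\Delta_p(r_-^{-\alpha})\ge 0$.
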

If $F$ is reversible, then $F^*(\pm du)=F(\nabla u)$, i.e., the norm of the gradient of $u$.
Therefore, Theorem \ref{nonpositflag2} implies  the classical Hardy inequality  (\ref{1.1newone})  for $p\in (1,n)$, the Hardy inequality on Riemannian manifolds (cf. Yang et al. \cite[Theorem 3.1]{YSK} and  D'Ambrosio et al. \cite[Theorem 6.5]{DD}) and  the quantitative  Hardy inequality on Finsler manifolds  (cf. Krist\'aly et al. \cite{KR}). In particular, for the Funk metric measure manifold in Example \ref{Funkexam},
the inequality above yields (compare (\ref{hard1.3funk}))
\[
\inf_{u\in C^\infty_0(M)\backslash\{0\}}\frac{\ds \int_{M}\max\{F^{*2}{(\pm du)}\}d\mathfrak{m}_{BH} }{\ds \int_{M}\frac{u^2}{d_F^2(x,\mathbf{0})}d\mathfrak{m}_{BH}}\geq \left(\frac{n-2}{2}\right)^2.
\]
Furthermore,   Theorem \ref{nonpositflag2} remains valid under the weaker assumptions. See Theorem \ref{nonpositflag22} below.
The counterpart of Theorem \ref{nonpositflag2} is as follows.
\begin{theorem}\label{frirstcurHard}
Let $(M,F,d\mathfrak{m})$ be an $n$-dimensional Cartan-Hadamard measure manifold. Given $o\in M$, set $r_+(x):=d_F(o,x)$. If $\mathbf{S}^+_o\leq 0$, then for any $p,\beta\in \mathbb{R}$ with $p>n>-\beta$,
we have
\[
\int_{M}r^{\beta+p}_+\max\{ F^{*p}(\pm du) \}d\mathfrak{m}\geq \left( \frac{n+\beta}{p} \right)^p\int_{M}r_+^{\beta} {|u|^p}d\mathfrak{m},\ \forall\,u\in C^\infty_0(M).
\]

In particular, the constant $ \left( \frac{n+\beta}{p} \right)^p$  is sharp if $(M,F,d\mathfrak{m})$ satisfies
\[
\lambda_F=1, \ \mathbf{K}=0, \ \mathbf{S}^+_o=0.\tag{1.3}\label{conditionsharpcon}
\]
\end{theorem}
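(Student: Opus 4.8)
The plan is to run the vector-field (Fisher-information) method: test a well-chosen radial field against $|u|^p$, integrate by parts, and close the estimate with H\"older's inequality, the geometric engine being a Laplacian comparison for $r_+$. Since $(M,F)$ is Cartan-Hadamard, the forward distance $r_+=d_F(o,\cdot)$ is smooth on $M\setminus\{o\}$ (no cut locus) and satisfies the eikonal identity $F^*(dr_+)=1$, so its gradient obeys $F(\nabla r_+)=1$ and $dr_+(\nabla r_+)=1$. First I would establish
\[
\Delta r_+\ \ge\ \frac{n-1}{r_+}\quad\text{on } M\setminus\{o\},
\]
where $\Delta=\di_{\mathfrak m}(\nabla\,\cdot\,)$ is the measure-weighted Finsler Laplacian. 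This is where both curvature hypotheses enter: the bound $\mathbf{K}\le 0$ produces, through the Jacobi-field/Riccati comparison, the Euclidean lower bound $(n-1)/r_+$ for the unweighted mean curvature, while the hypothesis $\mathbf{S}^+_o\le 0$ forces the correction coming from the distortion of $d\mathfrak m$ along geodesics from $o$ to be nonnegative, so the two combine into the displayed estimate. (In the Riemannian model of Example~\ref{seconex} this reads $\Delta_f r=\Delta_g r-\partial_r f\ge (n-1)/r$, since $\partial_r f=\mathbf{S}(\nabla r)\le 0$.)

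Next I would compute, for the field $X:=r_+^{\beta+1}\nabla r_+$ and using $dr_+(\nabla r_+)=1$ together with the comparison and $r_+^{\beta+1}>0$,
\[
\di_{\mathfrak m}(X)=(\beta+1)\,r_+^{\beta}+r_+^{\beta+1}\,\Delta r_+\ \ge\ (\beta+1)r_+^\beta+(n-1)r_+^\beta=(n+\beta)\,r_+^\beta.
\]
Multiplying by $|u|^p\ge 0$, integrating, and integrating by parts over $M$ gives the heart of the argument; the boundary contribution over a small forward geodesic sphere around $o$ is of order $\varepsilon^{n+\beta}\to 0$ precisely because $n>-\beta$, so the singularity of $r_+$ at $o$ is harmless and
\[
(n+\beta)\int_M r_+^\beta |u|^p\,d\mathfrak m\ \le\ -p\int_M |u|^{p-2}u\,r_+^{\beta+1}\,du(\nabla r_+)\,d\mathfrak m.
\]
The decisive pointwise bound is $|du(\nabla r_+)|\le\max\{F^*(du),F^*(-du)\}=\max\{F^*(\pm du)\}$: because $\nabla r_+$ is a single forward direction with $F(\nabla r_+)=1$, one has $du(\nabla r_+)\le F^*(du)$ and $(-du)(\nabla r_+)\le F^*(-du)$, and it is exactly the irreversibility that forces the maximum rather than $F^*(du)$ alone.

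With this, splitting $r_+^{\beta+1}=r_+^{\beta(p-1)/p}\cdot r_+^{(\beta+p)/p}$ and applying H\"older's inequality with exponents $p/(p-1)$ and $p$ yields
\[
(n+\beta)\int_M r_+^\beta|u|^p\,d\mathfrak m\ \le\ p\left(\int_M r_+^\beta|u|^p\,d\mathfrak m\right)^{\frac{p-1}{p}}\left(\int_M r_+^{\beta+p}\max\{F^{*p}(\pm du)\}\,d\mathfrak m\right)^{\frac1p},
\]
and dividing out the common factor and raising to the $p$-th power delivers the inequality with constant $\left(\frac{n+\beta}{p}\right)^p$. For sharpness under $\lambda_F=1$, $\mathbf{K}=0$, $\mathbf{S}^+_o=0$ I would test with truncated radial profiles $u_\varepsilon\approx r_+^{-(n+\beta)/p}$: reversibility collapses $\max\{F^{*p}(\pm du)\}$ to $F^{*p}(du)$, while $\mathbf{K}=0$ and $\mathbf{S}^+_o=0$ turn the comparison into the equality $\Delta r_+=(n-1)/r_+$, so every inequality above becomes asymptotically sharp and the Rayleigh quotient tends to $\left(\frac{n+\beta}{p}\right)^p$. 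The hard part will be the first step: proving the weighted comparison with the correct sign of the $\mathbf{S}$-term and controlling it globally on the Cartan-Hadamard manifold, since the entire constant is generated there. The range $p>n$ (complementary to the range $1<p<n$ of Theorem~\ref{nonpositflag2}) should be dictated by the admissibility and two-ended integrability of these extremal test functions, together with the choice of forward versus reverse distance that keeps them sharp in the irreversible setting.
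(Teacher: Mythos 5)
Your proposal is correct and follows essentially the same route as the paper: the vector field $X=r_+^{\beta+1}\nabla r_+$ with $\di X\geq(n+\beta)r_+^{\beta}$ coming from the Laplacian comparison $\Delta r_+\geq\frac{n-1}{r_+}$ (valid under $\mathbf{K}\leq0$ and $\mathbf{S}^+_o\leq0$, as in Lemma~\ref{flagcurLa}), followed by the pointwise bound $|du(\nabla r_+)|\leq\max\{F^*(\pm du)\}$ and H\"older, is exactly the mechanism packaged in Theorem~\ref{divlemf} and Lemma~\ref{mainlemmforcr}(i), and your sharpness test functions are the same radial quasi-extremals used in Lemma~\ref{impsharplem} (the paper perturbs the exponent rather than truncating, but this is a technical variant). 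The only cosmetic difference is that the paper verifies $f_X\leq\di X$ weakly via regularized test functions instead of your explicit boundary-term estimate of order $\varepsilon^{n+\beta}$ on small forward spheres, which is legitimate here since the cut locus is empty.
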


We discuss the spaces with (\ref{conditionsharpcon}) briefly. Since
a flat Riemannian Cartan-Hadamard  manifold is always isometric to a Euclidean space,
we get nothing new in the Riemannain setting. However, it is another story in the Finsler setting.  There are
plenty of Finsler metric measure manifolds satisfying
(\ref{conditionsharpcon})
which are not isometric to each other (see Example \ref{ConstrucMin} below).
Hence, Theorem \ref{frirstcurHard} provides a number of new models on which the inequality above is optimal.
Moreover,  this theorem   can be extended to a more general case.
 See Theorem \ref{frirstcurHard2} below.

We also have a logarithmic Hardy inequality.
\begin{theorem}\label{logone}
Let $(M,F,d\mathfrak{m})$ be an $n$-dimensional reversible  Cartan-Hadamard measure manifold.
Given $o\in M$, set $r(x):=d_F(x,o)$ and $B_o(R):=\{x\in M: r(x)<R\}$.
If $\mathbf{S}^+_o\leq0$, for any $R,p,\beta\in \mathbb{R}$ with $R>0$, $p\in (1,n]$ and $\beta<-1$,
 we have
\begin{align*}
\int_{B_o(R)}\left[\log\left(\frac{R}{r} \right) \right]^{p+\beta}F^{*p}(du) d\mathfrak{m}\geq \left(\frac{|\beta+1|}{p}\right)^p\int_{B_o(R)}\left[\log\left(\frac{R}{r} \right) \right]^\beta \frac{|u|^p}{r^p}d\mathfrak{m},
\end{align*}
for any  $u\in C^\infty_0(B_o(R))$. In particular, the constant $\left(\frac{|\beta+1|}{p}\right)^p$ is sharp.
\end{theorem}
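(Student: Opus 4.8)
The plan is to prove the inequality by the vector-field (divergence) method adapted to the Finsler distance, reading off the sharp constant from a one-parameter optimization. Since $(M,F)$ is reversible we have $r_+=r_-=r$ and $F^*(dr)=F(\nabla r)=1$, and every $u\in C^\infty_0(B_o(R))$ vanishes in a neighbourhood of $\partial B_o(R)$, so no boundary term will appear when integrating by parts. Write $L:=\log(R/r)$, which is positive on $B_o(R)\setminus\{o\}$ and satisfies $dL=-r^{-1}\,dr$. The geometric input I would isolate first is the Laplacian comparison $\Delta_{\mathfrak m}r\ge (n-1)/r$ (in the barrier/distributional sense across the cut locus), where $\Delta_{\mathfrak m}$ is the weighted Finsler Laplacian associated with $d\mathfrak m$: the Cartan--Hadamard hypothesis $\mathbf{K}\le 0$ furnishes the Euclidean lower bound for the purely metric part, while the correction term is exactly the $S$-curvature along the radial geodesics issuing from $o$, which is nonpositive by $\mathbf{S}^+_o\le 0$ (using reversibility to identify the geodesics to and from $o$).

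With this in hand I would test against the radial vector field $V:=\phi(r)\,\nabla r$, with
\[
\phi(r):=\left(\tfrac{|\beta+1|}{p}\right)^{p-1} L^{-|\beta+1|/p}\,r^{1-p}>0,
\]
and use the identity $\int_{B_o(R)}\operatorname{div}(|u|^p V)\,d\mathfrak m=0$, so that $\int |u|^p\operatorname{div}V\,d\mathfrak m=-p\int|u|^{p-2}u\,du(V)\,d\mathfrak m$. The Finsler fundamental inequality gives $du(\nabla r)\le F^*(du)\,F(\nabla r)=F^*(du)$, whence (by reversibility) $-p\,|u|^{p-2}u\,du(V)\le p\,|u|^{p-1}\phi\,F^*(du)$; applying Young's inequality with exponents $p$ and $p/(p-1)$ and an $r$-dependent splitting parameter tuned so that the $F^{*p}(du)$ term acquires exactly the weight $L^{p+\beta}$, this splits into a multiple of $L^{p+\beta}F^{*p}(du)$ plus a multiple of $L^{\beta}r^{-p}|u|^p$. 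The scalar fact I would then verify, using $\operatorname{div}(\phi\nabla r)=\phi'+\phi\,\Delta_{\mathfrak m}r$ together with the comparison above, is that the coefficient of $|u|^p$ is bounded below by
\[
\left(\tfrac{|\beta+1|}{p}\right)^{p-1}\Big[\,|\beta+1|-(p-1)\tfrac{|\beta+1|}{p}+(n-p)L\,\Big]\,L^{\beta}r^{-p}\ \ge\ \left(\tfrac{|\beta+1|}{p}\right)^{p}L^{\beta}r^{-p},
\]
the last inequality being where $p\in(1,n]$ enters, since it makes the surplus $(n-p)L\ge 0$. Equivalently, one may phrase the whole computation as checking that $w:=L^{\,|\beta+1|/p}$ is a positive supersolution, $-\operatorname{div}\!\big(L^{p+\beta}F^{*p-2}(dw)\,\nabla w\big)\ge (|\beta+1|/p)^p\,L^\beta r^{-p}\,w^{p-1}$, and invoking a Picone-type inequality, which has the advantage of producing the constant with no bookkeeping.

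The sharp value $(|\beta+1|/p)^p$ is forced by optimizing the exponent: replacing $|\beta+1|/p$ by a free $\gamma>0$ in $w=L^{\gamma}$ turns the coefficient into $\gamma^{p-1}\big(|\beta+1|-(p-1)\gamma\big)$, whose maximum over $\gamma>0$ is attained at $\gamma=|\beta+1|/p$ and equals $(|\beta+1|/p)^p$; the constraint $\beta<-1$ guarantees $\gamma>0$ and the correct integrability of $w$ as $r\to R$. For the reverse direction (sharpness) I would insert into the quotient a family of truncations of this extremal $w$, cut off on shrinking annuli near $o$ and near $\partial B_o(R)$, and show that the Rayleigh quotient converges to $(|\beta+1|/p)^p$; the dominant contributions come from the region $r\to R$, i.e.\ $L\to 0^+$, which is precisely where the $(n-p)L$ surplus vanishes and the bound is saturated.

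The step I expect to be the main obstacle is the rigorous treatment of the two singular layers. First, the comparison $\Delta_{\mathfrak m}r\ge(n-1)/r$ must be made precise across the cut locus of $o$ and near $o$ itself, which in the Finsler setting requires a barrier argument together with the correct $S$-curvature sign convention under reversibility; getting the \emph{direction} of this comparison right, so that it yields a lower bound for the coefficient of $|u|^p$ (equivalently, in the Picone framing, so that it survives multiplication by the inward radial factor coming from $\nabla w$), is the delicate point. Second, the integrations by parts must be justified despite the singularities of the weights $L^{p+\beta}$, $L^{\beta}r^{-p}$ and of $w$ at both $o$ and $\partial B_o(R)$; I would handle this by the usual cutoff/approximation and a dominated-convergence passage, using $\beta<-1$ and $u\in C^\infty_0(B_o(R))$ to control the endpoint behaviour.
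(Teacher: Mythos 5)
Your proposal is correct and takes essentially the same route as the paper, which deduces Theorem~\ref{logone} from its irreversible version (Theorem~\ref{logone2}): there the proof is precisely your divergence/vector-field argument with the radial field $\rho^{\beta+1}F^{p-2}(\nabla\rho)\nabla\rho$ for $\rho=\log(R/r)$, the comparison $\Delta r\geq (n-1)/r$ coming from $\mathbf{K}\leq 0$ and $\mathbf{S}_o^+\leq 0$, the hypothesis $p\leq n$ entering through the nonnegative surplus $(n-p)L$, and sharpness via (truncated or exponent-perturbed) powers of $\rho$ near the critical exponent $|\beta+1|/p$. One slip to fix: the weight in your explicit $\phi$ should be $L^{-|\beta+1|}=L^{\beta+1}$ rather than $L^{-|\beta+1|/p}$ --- the displayed coefficient inequality you then verify is exactly the one produced by this corrected choice, so nothing downstream changes.
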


From Theorem \ref{logone}, one can easily derive the logarithmic Hardy inequalities on Euclidean spaces, Riemannian Cartan-Hadamard manifolds and reversible Minksowski spaces, respectively (e.g. see   \cite{D,DD,MST}). Moreover,
 Theorem \ref{logone} can be generalized to the irreversible case. See Theorem \ref{logone2} below.

Now we turn to consider the Hardy inequalities concerned with the  Ricci curvature. Although this problem is also  genuinely new in the Riemannian framework, we prefer to study it in the context of Finsler geometry.

Given an $n$-dimensional Finsler metric measure manifold $(M,F, d\mathfrak{m})$,
by means of the flag curvature, one can define  the  {\it Ricci curvature} ${\bf Ric}$ in the usual way.
The {\it weighted Ricci curvature} $\mathbf{Ric}_N$, introduced in Ohta and Sturm\cite{Ot}, is defined as follows: given $N\in [n,\infty]$, for any unit vector $y\in TM$,
\begin{align*}\mathbf{Ric}_N(y)=\left\{
\begin{array}{lll}
\mathbf{Ric}(y)+\left.\frac{d}{dt}\right|_{t=0}\mathbf{S}(\gamma_y(t))-\frac{\mathbf{S}^2(y)}{N-n}, && \text{ for }N\in (n,\infty),\\
\\
\underset{L\downarrow n}{\lim}\mathbf{Ric}_L(y), && \text{ for }N=n,\\
\\
\mathbf{Ric}(y)+\left.\frac{d}{dt}\right|_{t=0}\mathbf{S}(\gamma_y(t)),  && \text{ for }N=\infty.
\end{array}
\right.
\end{align*}
The weighted Ricci curvature has an important influence on the geometry of Finsler manifolds. See Ohta \cite{O,O1,O2}, etc. for surveys.
 Now we   state the result as follows.

\begin{theorem} \label{Rictheno}
Let $(M,F,d\mathfrak{m})$ be an $n$-dimensional forward complete Finsler metric measure manifold with $\mathbf{Ric}_N\geq0$, where $N\in [n,\infty)$.
Given $o\in M$, define $r_+(x):=d_F(o,x)$ and $r_-(x):=d_F(x,o)$.

\smallskip

\noindent(1) Given $p,\beta\in \mathbb{R}$ with $p>N$ and $\beta<-N$, for any $u\in C^\infty_0(M\setminus\{o\})$, we have
 \[
\int_{M\backslash\{o\}}r^{\beta+p}_+\max\{ F^{*p}(\pm du) \}d\mathfrak{m}\geq \left( \frac{|N+\beta|}{p} \right)^p\int_{M\backslash\{o\}}r_+^{\beta} {|u|^p}d\mathfrak{m}.\tag{1.4}\label{5.12newnew}
\]

\smallskip

\noindent(2) Given $p,\beta\in \mathbb{R}$ with $1<p<N$ and  $\beta<-N$, for any $u\in C^\infty_0(M\setminus\{o\})$, we have
 \[
\int_{M\backslash\{o\}}r^{\beta+p}_-\max\{ F^{*p}(\pm du) \}d\mathfrak{m}\geq \left( \frac{|N+\beta|}{p} \right)^p\int_{M\backslash\{o\}}r_-^{\beta} {|u|^p}d\mathfrak{m}.\tag{1.5}\label{1.6newnew}
\]

\smallskip
\noindent In particular, the constants are sharp in (\ref{5.12newnew}) and  (\ref{1.6newnew}) if $\lambda_F=1$, $N=n$ and $p+\beta>-n$.
\end{theorem}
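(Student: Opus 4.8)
The plan is to establish both inequalities by the divergence (vector-field) method, which reduces the Hardy inequality to a pointwise Laplacian comparison for the distance function together with H\"older's inequality. I treat part~(1) in detail; part~(2) follows from the identical argument with the forward data $(F,r_+)$ replaced by the backward data $(\overleftarrow{F},r_-)$, where $\overleftarrow{F}(x,y):=F(x,-y)$ is the reverse metric. Since $\overleftarrow{F}^{\,*}(\xi)=F^*(-\xi)$, the symmetrised term $\max\{F^{*p}(\pm du)\}$ is reversal invariant and hence appears unchanged in both parts.

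The geometric input I would record first is the weighted Laplacian comparison: because $\mathbf{Ric}_N\geq 0$ with $N\in[n,\infty)$, the weighted Finsler Laplacian of the forward distance obeys $\Delta r_+\leq (N-1)/r_+$ on $M\setminus(\{o\}\cup\Cut(o))$, and the contribution of the cut locus is a nonpositive singular measure, so the bound persists distributionally on $M\setminus\{o\}$. I then fix the radial vector field $V:=r_+^{\beta+1}\nabla r_+$. Using $dr_+(\nabla r_+)=F^2(\nabla r_+)=1$ and the product rule $\di(fW)=f\,\di W+df(W)$, one computes $\di V=(\beta+1)r_+^{\beta}+r_+^{\beta+1}\Delta r_+\leq (N+\beta)r_+^{\beta}$; as $\beta<-N$ the right-hand side equals $-|N+\beta|\,r_+^{\beta}\leq 0$, so $-\di V\geq |N+\beta|\,r_+^{\beta}$ pointwise.

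Next, for $u\in C_0^\infty(M\setminus\{o\})$ I multiply by $|u|^p\geq 0$ and integrate; the divergence theorem produces no boundary term because $u$ is compactly supported away from $o$, giving
\[
|N+\beta|\int_{M\setminus\{o\}} r_+^{\beta}|u|^p\,d\mathfrak{m}\leq \int_{M\setminus\{o\}}(-\di V)\,|u|^p\,d\mathfrak{m}=p\int_{M\setminus\{o\}}|u|^{p-1}\operatorname{sgn}(u)\,du(V)\,d\mathfrak{m}.
\]
The step that generates the irreversible term is the duality bound: since $F(\nabla r_+)=1$, for any covector $\xi$ one has $\xi(\nabla r_+)\leq F^*(\xi)$ and $-\xi(\nabla r_+)\leq F^*(-\xi)$, whence $|du(\nabla r_+)|\leq\max\{F^*(\pm du)\}$; together with $du(V)=r_+^{\beta+1}du(\nabla r_+)$ this bounds the last integrand by $p\,|u|^{p-1}r_+^{\beta+1}\max\{F^*(\pm du)\}$. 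Splitting $r_+^{\beta+1}=r_+^{\beta(p-1)/p}\cdot r_+^{(\beta+p)/p}$ and applying H\"older with exponents $p/(p-1)$ and $p$ yields
\[
|N+\beta|\int r_+^{\beta}|u|^p\,d\mathfrak{m}\leq p\Big(\int r_+^{\beta}|u|^p\,d\mathfrak{m}\Big)^{\frac{p-1}{p}}\Big(\int r_+^{\beta+p}\max\{F^{*p}(\pm du)\}\,d\mathfrak{m}\Big)^{\frac1p},
\]
and after dividing and raising to the $p$-th power this is exactly \eqref{5.12newnew}. The complementary ranges $p>N$ (with $r_+$) and $1<p<N$ (with $r_-$) single out the two sides of the critical exponent $p=N$ and fix which distance function carries the extremal profile; the inequality above is insensitive to the split, but the optimality of the constant is not.

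For sharpness under $\lambda_F=1$, $N=n$, and $p+\beta>-n$: reversibility gives $\max\{F^{*p}(\pm du)\}=F^{*p}(du)$, and for a radial $u=\psi(r)$ one has $F^*(du)=|\psi'(r)|$ because $F^*(dr)=1$, so the inequality collapses to a one-dimensional weighted Hardy inequality whose weight $r^{n-1}$ is precisely the equality case of the Bishop--Gromov comparison when $N=n$. Inserting power-type near-extremals $\psi(r)\sim r^{-(n+\beta)/p}$ (with logarithmic corrections in the borderline case) cut off smoothly in a shrinking annulus about $o$, one checks that the Rayleigh quotient tends to $(|n+\beta|/p)^p$; here $p+\beta>-n$ is exactly the local integrability of the weight $r^{\beta+p}$ near $o$ needed to evaluate the limit. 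The main obstacle is the geometric input: proving the weighted Laplacian comparison distributionally across $\Cut(o)$, and, for part~(2), transporting it faithfully to the backward distance through the reverse metric, since the Finsler Laplacian is nonlinear and direction dependent. Once the comparison is available in equality form the sharpness limit follows the standard truncation scheme.
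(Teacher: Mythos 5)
Your proposal is correct and follows essentially the same route as the paper: the paper's Lemma \ref{mainlemmforcr} and Theorem \ref{divlemf} use exactly your radial vector field (there written as $X=-\alpha r_{\pm}^{\beta+1}\nabla r_{\pm}$, packaged via the weak $p$-super/subharmonicity of $r_{\pm}^{\pm\alpha}$), the same Ohta--Sturm comparison $\Delta r_+\leq (N-1)/r_+$ from Lemma \ref{RicLaplace}, the same duality-plus-H\"older step, the same passage to the reverse metric for part (2), and the same truncated power-profile test functions (Lemma \ref{impsharplem}) for sharpness. The only substantive difference is presentational: the paper establishes $f_X\leq \di X$ in the weak sense through explicit test functions rather than asserting the distributional Laplacian comparison across the cut locus directly, which is precisely the technical point you flagged.
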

Clearly, the inequality (\ref{5.12newnew})  implies the  classical Hardy inequality (\ref{1.1newone}) for $p>n$.
We remark that  the manifold $M$ in Theorem \ref{Rictheno} is unnecessarily noncompact. In fact,  $M$ must be closed  if $\mathbf{Ric}_N$ is bounded below by a positive number (cf. Ohta \cite{O}).
On the other hand,
 $\mathbf{Ric}_N$ has a close relation with the {\it Bakry-\'Emery Ricci tensor}. More precisely, for a  Riemannian metric measure manifold $(M,g,e^{-f}d\vol_g)$, Example \ref{seconex} furnishes
\[
\mathbf{Ric}_{N+n}=\mathbf{Ric}_N^{\text{B\'E}}, \ \forall\,N\in (0,+\infty],
\]
where $\mathbf{Ric}_N^{\text{B\'E}}$ denotes the  $N$-Bakry-\'Emery Ricci tensor (cf. Wei and Wylie \cite{WW22}). As a consequence,
Theorem \ref{Rictheno}  inspires  the following result.

\begin{theorem}\label{cofunRic}
Let $(M,g,e^{-f}d\vol_g)$ be an $n$-dimensional closed  Riemannian metric measure manifold with $\mathbf{Ric}_{\infty}^{\text{B\'E}}\geq0$. Given $o\in M$, let $r$ denote the distance function from $o$. Suppose $\partial_r f\geq 0$ along all the minimal geodesics from $o$. Thus,
 for any $p\in (1,n)\cup(n,\infty)$ and $\beta<-n$ with $p+\beta>-n$, we have
 \[
\int_{M}r^{\beta+p}|\nabla u |^p e^{-f}d\vol_g\geq \left( \frac{|n+\beta|}{p} \right)^p\int_{M}r^{\beta} {|u|^p}e^{-f} d \vol_g, \ \forall\, u\in \mathfrak{C}^\infty(M,o),\tag{1.6}\label{1.8newnew}
\]
where $\mathfrak{C}^\infty(M,o):=\{u\in C^\infty(M): \,u(o)=0 \}$. In particular, $\left( \frac{|n+\beta|}{p} \right)^p$ is sharp with respect to $\mathfrak{C}^\infty(M,o)$.
\end{theorem}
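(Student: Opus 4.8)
The plan is to reduce the statement to a weighted Laplacian comparison and then run a radial vector-field argument of the type underlying Theorem \ref{Rictheno}. Since $(M,g)$ is Riemannian, the metric $F=\sqrt{g}$ is reversible, so $F^*(\pm du)=|\nabla u|$ and $r_+=r_-=r$; moreover, by the identity recorded before the theorem, $\mathbf{Ric}_\infty^{\text{B\'E}}=\mathbf{Ric}_\infty$. The decisive point is that Theorem \ref{Rictheno} is only available for \emph{finite} $N$, whereas here $N=\infty$. The extra hypothesis $\partial_r f\geq 0$ is exactly what allows one to recover the \emph{dimension-$n$} comparison that the finite-$N$ result would produce with $N=n$, and this is why the admissible range of $(p,\beta)$ coincides with the $N=n$ specialization of Theorem \ref{Rictheno}.

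First I would establish, along every minimal geodesic $\gamma$ issuing from $o$, the bound $\Delta_f r\leq \frac{n-1}{r}$, where $\Delta_f=\Delta-\langle\nabla f,\nabla\cdot\rangle$ is the drift Laplacian associated with $\mathfrak{m}=e^{-f}d\vol_g$. Writing $m_f:=\Delta_f r=\Delta r-\partial_r f$, the Bochner formula gives
\[
\partial_r m_f=-|\mathrm{Hess}\,r|^2-\mathbf{Ric}_\infty^{\text{B\'E}}(\gamma',\gamma')\leq -\frac{(\Delta r)^2}{n-1},
\]
using $\mathbf{Ric}_\infty^{\text{B\'E}}\geq 0$ together with $|\mathrm{Hess}\,r|^2\geq (\Delta r)^2/(n-1)$ (Cauchy--Schwarz on the $(n-1)$-dimensional restriction of $\mathrm{Hess}\,r$ to $(\nabla r)^\perp$). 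Since $\Delta r=m_f+\partial_r f$ with $\partial_r f\geq 0$, and since $m_f>0$ near $o$ (where $m_f\sim (n-1)/r$), one obtains the Riccati inequality $\partial_r m_f\leq -m_f^2/(n-1)$ as long as $m_f\geq 0$. Comparing $1/m_f$ with $r/(n-1)$ and noting that their difference tends to $0$ as $r\to 0^+$ forces $m_f\leq (n-1)/r$ while $m_f>0$, and the bound is trivial once $m_f\leq 0$. This inequality then holds on $M\setminus\{o\}$ in the distributional sense, the cut locus contributing with the favorable sign (Calabi's argument).

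With this comparison, set $V:=-r^{\beta+1}\nabla r$. Because $|\nabla r|=1$, a direct computation gives $\di_{\mathfrak{m}}V=-(\beta+1)r^\beta-r^{\beta+1}\Delta_f r\geq -(n+\beta)r^\beta=|n+\beta|\,r^\beta$, the final equality using $\beta<-n$. Fixing $u\in\mathfrak{C}^\infty(M,o)$, I would multiply by $|u|^p\geq 0$, integrate over $\Omega_\epsilon:=M\setminus\overline{B_o(\epsilon)}$, and integrate by parts; the boundary integral over $\partial B_o(\epsilon)$ is $O(\epsilon^{\,n+\beta+p})$, because $|u|^p=O(\epsilon^p)$ (as $u(o)=0$) while $\langle V,\nu\rangle$ scales like $r^{\beta+1}$ and the weighted area scales like $r^{n-1}$, so it vanishes as $\epsilon\to 0$ precisely under the hypothesis $p+\beta>-n$. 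This leaves $|n+\beta|\int_M r^\beta|u|^p\,d\mathfrak{m}\leq p\int_M r^{\beta+1}|u|^{p-1}|\nabla u|\,d\mathfrak{m}$, and writing the integrand as $\bigl(r^{(\beta+p)/p}|\nabla u|\bigr)\bigl(r^{\beta/p'}|u|^{p-1}\bigr)$ and applying H\"older with exponents $p,p'$ yields the claimed inequality. The exclusion $p\neq n$ mirrors the borderline $p=N=n$ that is genuinely excluded in both parts of Theorem \ref{Rictheno}.

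For sharpness I would test with radial functions concentrated near $o$. By the co-area formula the two integrals reduce to $\int_0^\delta r^{\beta+p}|g'(r)|^p A_f(r)\,dr$ and $\int_0^\delta r^\beta|g(r)|^p A_f(r)\,dr$, where the weighted sphere area satisfies $A_f(r)=\int_{\partial B_o(r)}e^{-f}\,dA_g=\omega_{n-1}e^{-f(o)}r^{n-1}(1+o(1))$ as $r\to 0$, the asymptotics being those of the flat model. Hence the Rayleigh quotient is governed to leading order by the one-dimensional weighted Hardy inequality with weight $t^{\beta+n-1}$, whose sharp constant is $(|n+\beta|/p)^p$; choosing $g(r)=r^{|n+\beta|/p}$ suitably truncated and sharpening the truncation drives the quotient to this value. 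The two genuine difficulties are: (i) the $N=\infty$ comparison, namely extracting the clean dimension-$n$ bound $\Delta_f r\leq (n-1)/r$ from $\mathbf{Ric}_\infty^{\text{B\'E}}\geq 0$ by absorbing $\partial_r f$ through its sign—this is the heart of the matter and is \emph{not} covered by the finite-$N$ Theorem \ref{Rictheno}; and (ii) making the integration by parts rigorous on the closed manifold across the cut locus and at the singular point $o$, which is exactly where both $u(o)=0$ and $p+\beta>-n$ are consumed.
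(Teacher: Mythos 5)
Your argument is correct, but it reaches the theorem by a partly different route than the paper, and it is worth separating what is genuinely new in your write-up from what merely re-derives the paper's lemmas. The weighted Laplacian comparison $\Delta_f r\leq \frac{n-1}{r}$ that you call ``the heart of the matter'' and claim is \emph{not} covered by the finite-$N$ result is in fact already in the paper: it is Lemma \ref{RicLaplace}(ii) (proved via Lemma \ref{compinft}, which is exactly your Bochner--Riccati argument in Finsler form with $K=0$, $a=0$), and it feeds into Theorem \ref{Rictheno2}, the $\mathbf{Ric}_\infty$ analogue of Theorem \ref{Rictheno}; via Example \ref{seconex} the hypotheses $\mathbf{Ric}_\infty^{\text{B\'E}}\geq 0$ and $\partial_r f\geq 0$ translate precisely into $\mathbf{Ric}_\infty\geq 0$ and $\mathbf{S}_o^+\geq 0$. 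Where you genuinely diverge is in handling the class $\mathfrak{C}^\infty(M,o)$ on the closed manifold. The paper proves Theorem \ref{reverRicinfty} by a density argument in weighted Sobolev spaces: Proposition \ref{thsob} shows that a continuous $W^{1,p}(M,r^{\beta+p})$ function vanishing at $o$ restricts to $W^{1,p}(M\setminus\{o\},r^{\beta+p})$, Theorem \ref{spaceequ} (itself a consequence of the Hardy inequality on $M\setminus\{o\}$) identifies that space with $D^{1,p}(M\setminus\{o\},r^{\beta+p})$, and one then approximates by $C^\infty_0(M\setminus\{o\})$ and applies Theorem \ref{Rictheno2}. You instead excise $\overline{B_o(\epsilon)}$, integrate by parts directly, and kill the boundary term using $|u|=O(r)$ (from $u(o)=0$) together with $p+\beta+n>0$. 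This is more elementary, avoids Appendix \ref{SObespa} entirely, and makes transparent exactly where the two hypotheses $u(o)=0$ and $p+\beta>-n$ are consumed; the paper's route is heavier but yields the stronger conclusion recorded in Remark \ref{remarksolesf} (validity for all $u\in W^{1,p}(M,r^{\beta+p})\cap C(M)$ with $u(o)=0$) and reuses machinery needed elsewhere. Two small caveats: your appeal to Calabi's argument across the cut locus is the right idea but should be stated as the assertion that the singular part of the distributional $\Delta r$ is non-positive, so that $\mathrm{div}\,V\geq|n+\beta|r^\beta$ persists distributionally for $V=-r^{\beta+1}\nabla r$; and your sharpness paragraph is only a sketch --- the paper's Lemma \ref{impsharplem} carries it out with the explicit near-extremals $r^{|n+\beta|/p+\delta/p}$ glued to a decaying power, verifies their membership in the relevant completion, and shows the Rayleigh quotient is strictly below $c^p(\delta)$, which is the content your ``suitably truncated and sharpened'' step would have to supply. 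Incidentally, your own excision argument never uses $p\neq n$; that restriction in the statement is inherited from the $\alpha>0$ normalization in Lemma \ref{mainlemmforcr}, not from any obstruction in your direct computation.
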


 On a closed  manifold $M$, the Hardy inequality (\ref{1.1newone}) fails for $u\in C^\infty_0(M)$  because in this case the constant functions belong to $C^\infty_0(M)$.  Theorem \ref{cofunRic} then indicates   what kind of function  (\ref{1.1newone}) remains valid for. See Theorem \ref{reverRicinfty} below for a Finsler version of
 the theorem above.


We note that Theorems \ref{nonpositflag2}-\ref{Rictheno} can be established on  Riemannian metric measure manifolds and backward complete Finsler metric measure manifolds; we leave the formulation of such statements to the interested reader.

\medskip

The paper is organized as follows. Section \ref{sect3} is devoted to preliminaries on Finsler geometry. The proofs of Theorem
\ref{nonpositflag2}-\ref{cofunRic} are given in Section \ref{distanceHardy}, while  the Hardy
inequalities for Finsler
 $p$-sub/superharmonic functions are discussed in Section \ref{Hardforpharm}. We devote Appendix \ref{Aapp} and
\ref{SObespa} to some necessary tools which are useful to prove Theorems \ref{logone} and \ref{cofunRic}.

\section{Preliminaries}\label{sect3}

\subsection{Elements from Finsler geometry} In this section, we recall some definitions and properties from Finsler geometry; for details see  Bao, Chern and Shen \cite{BCS}, Ohta and Sturm \cite{Ot} and Shen \cite{Shen2013,Sh1}.

\subsubsection{Finsler manifolds.}
%
%
%

 Let $M$ be a
$n$-dimensional connected smooth  manifold and $TM=\bigcup_{x \in M}T_{x}
M $ be its tangent bundle. The pair $(M,F)$ is a \textit{Finsler
	manifold} if the continuous function $F:TM\to [0,\infty)$ satisfies
the conditions:

\smallskip

(a) $F\in C^{\infty}(TM\setminus\{ 0 \});$

(b) $F(x,\lambda y)=\lambda F(x,y)$ for all $\lambda\geq 0$ and $(x,y)\in TM;$

(c) $g_y:=g_{ij}(x,y)=[\frac12F^{2}%
]_{y^{i}y^{j}}(x,y)$ is positive definite for all $(x,y)\in
TM\setminus\{ 0 \}$, where $F(x,y):=F(y^i\frac{\partial}{\partial x^i}|_x)$.

\smallskip

The  quantity $g_y:=(g_{ij}(x,y))$ is called the {\it fundamental tensor}. It can be defined at $y=0$ if and only if $F$ is Riemannian, in which case $g$ is independent of $y$, i.e., $g=(g_{ij}(x))$.
The Euler theorem yields $F^2(x,y)=g_y(y,y)=g_{ij}(x,y)y^iy^j$.
Moreover, we have a Cauchy-Schwartz inequality
\[
g_y(y,w)\leq F(x,y)F(x,w), \ \forall\, y,w\in T_xM,\tag{2.1}\label{pre2.1}
\]
with equality if and only if $w=ky$ for $k\geq  0$.

Set $S_xM:=\{y\in T_xM:F(x,y)=1\}$ and $SM:=\cup_{x\in M}S_xM$. The {\it reversibility} $\lambda_F$ (cf. Rademacher \cite{R}) and the {\it uniformity constant} $\Lambda_F$ (cf. Egloff \cite{E}) of $(M,F)$ are defined as follows:
\[
\lambda_F:=\underset{y\in SM}{\sup}F(-y),\ \Lambda_F:=\underset{X,Y,Z\in SM}{\sup}\frac{g_X(Y,Y)}{g_Z(Y,Y)}.
\]
Clearly, ${\Lambda_F}\geq \lambda_F^2\geq 1$. In particular, $\lambda_F=1$ if and only if $F$ is reversible (i.e., symmetric), while $\Lambda_F=1$ if and
only if $F$ is Riemannian.  For convenience, we introduce the reversibility of a subset $U\subset M$, i.e.,
\[
\lambda_F(U):=\sup_{y\in SU}F(-y), \text{ where }SU:=\cup_{x\in U}S_xM.
\]
Thus, $\lambda_F(M)=\lambda_F$, and $\lambda_F(U)$ is finite if $\overline{U}$ is compact.

The {\it dual Finsler metric} $F^*$ of $F$ on $M$ is
defined by
\begin{equation*}
F^*(x,\xi):=\underset{y\in T_xM\backslash \{0\}}{\sup}\frac{\xi(y)}{F(x,y)}, \ \
\forall \xi\in T_x^*M,\tag{2.2}\label{2.2newineq}
\end{equation*}
which is   a Finsler metric on $T^*M$. Let $g_\xi^*$ be the fundamental tensor of $F^*$.  Then Yuan et al. \cite[Theorem 3.5]{YZS} furnishes
\[
F^{*2}(\xi+\eta)\geq F^{*2}(\xi)+2g^*_\xi(\xi,\eta)+\frac{1}{\Lambda_F}F^{*2}(\eta),\ \forall\, \xi,\eta\in T^*_xM,\tag{2.3}\label{ineq}
\]
where  $g^*_\xi(\xi,\eta):=0$ if $\xi=0$.

The {\it Legendre transformation} $\mathfrak{L} : TM \rightarrow T^*M$ is defined
by
\begin{equation*}
\mathfrak{L}(X):=\left \{
\begin{array}{lll}
 g_X(X,\cdot), & \ \ \text{ if } X\neq0, \\
 \\
0, & \ \ \text{ if } X=0.%
\end{array}
\right.
\end{equation*}
In particular,  $\mathfrak{L}:TM\backslash\{0\}\rightarrow T^*M\backslash\{0\}$ is a diffeomorphism with $F^*(\mathfrak{L}(X))=F(X)$, for any $X\in TM$.
Now let $f : M \rightarrow \mathbb{R}$ be a $C^1$-function on $M$; the
{\it gradient} of $f$ is defined as $\nabla f = \mathfrak{L}^{-1}(df)$. Thus,  $df(X) = g_{\nabla f} (\nabla f,X)$. For a non-Riemannian Finsler metric, $\nabla$ is usually nonlinear, i.e., $\nabla(f+h)\neq\nabla f+\nabla h$.

Let $\zeta:[0,1]\rightarrow M$ be a Lipschitz continuous path. The length of $\zeta$ is defined by
\[
L_F(\zeta):=\int^1_0 F(\dot{\zeta}(t))dt.
\]
Define the {\it distance function} $d_F:M\times M\rightarrow [0,+\infty)$ by
$d_F(x_1,x_2):=\inf L_F(\sigma)$,
where the infimum is taken over all
Lipschitz continuous paths $\zeta:[a,b]\rightarrow M$ with
$\zeta(a)=x_1$ and $\zeta(b)=x_2$. Generally,  $d_F(x_1,x_2)\neq d_F(x_2,x_1)$ unless $F$ is reversible.
The {\it forward and backward metric balls} $B^+_o(R)$ and $B^-_o(R)$ are defined by
\[
B^+_o(R):=\{x\in M:\, d_F(o,x)<R\},\ B^-_o(R):=\{x\in M:\, d_F(x,o)<R\}.
\]
If $F$ is reversible, forward metric balls coincide with backward ones, which are denoted by $B_o(R)$.

Given $o\in M$, set $r_+(x):=d_F(o,x)$ and $r_-(x):=d_F(x,o)$.    Shen \cite[Lemma 3.2.3]{Sh1}  yields
\[
F(\nabla r_+)=F^*(dr_+)=1,\ F(\nabla (-r_-))=F^*(-dr_-)=1,
\]
 a.e. on $M$. If $F$ is reversible,  both $r_\pm(x)$ are denoted by $r(x)$.

A smooth curve $t\mapsto \gamma(t)$ in $M$ is called a (constant speed) \textit{geodesic} if it satisfies
\[
\frac{d^2\gamma^i}{dt^2}+2G^i\left(\frac{d\gamma}{dt}\right)=0,
\]
where
\begin{align*}
G^i(y):=\frac14 g^{il}(y)\left\{2\frac{\partial g_{jl}}{\partial x^k}(y)-\frac{\partial g_{jk}}{\partial x^l}(y)\right\}y^jy^k\tag{2.4}\label{geoedesiccon}
\end{align*}
is the geodesic coefficient.
And we always use $\gamma_y(t)$ to denote  the geodesic with $\dot{\gamma}_y(0)=y$.

The Finsler manifold $(M,F)$ is {\it forward complete} if  every geodesic $t\mapsto \gamma(t)$, $0\leq t<1$, can be extended to a geodesic defined on $0\leq t<\infty$; similarly,  $(M,F)$ is  {\it backward complete} if  every geodesic $t\mapsto \gamma(t)$, $0< t\leq 1$, can be extended to a geodesic defined on $-\infty< t\leq 1$. If $(M,F)$ is both forward complete and backward complete, we say $(M,F)$ is {\it complete} for short.

 The \textit{cut value} $i_y$ of $y\in S_xM$ is defined by
\[
i_y:=\sup\{t: \text{ the geodesic }\gamma_y|_{[0,t]} \text{ is globally minimizing}  \}.
\]
The \textit{injectivity radius} at $x$ is defined as $\mathfrak{i}_x:=\inf_{y\in S_xM} i_y$.  According to Bao et al. \cite{BCS} and Yuan et al. \cite[Proposition 3.2]{YZS}, if $(M,{F})$ is either forward or backward complete, then ${\mathfrak{i}_x}>0$ for any point $x\in M$.
The \textit{cut locus} of $x$ is defined as
\[
\text{Cut}_x:=\left\{\gamma_y(i_y):\,y\in S_xM \text{ with }i_y<\infty \right\}.
\]
In particular, $\text{Cut}_x$ is closed and has null measure.

\subsubsection{Measures and curvatures}A triple $(M,F,d\mathfrak{m})$ is called a {\it FMMM}, (i.e., {\it Finsler metric measure manifold})
if $(M,F)$ is a Finsler  manifold endowed with a smooth measure $d\mathfrak{m}$.  In the sequel, the function $x\mapsto \sigma(x)$ denotes the {\it density function} of  $d\mathfrak{m}$ in a local coordinate system $(x^i)$, i.e.,
\[
d\mathfrak{m}=:\sigma(x)dx^1\wedge \cdots\wedge  dx^n.\tag{2.5}\label{measure}
\]

The {\it divergence} of a vector filed $X$ is defined as
\[
\di(X)\, d\mathfrak{m}:=d\left( X\rfloor d\mathfrak{m}\right).
\]
If $M$ is compact and oriented, we have the  divergence theorem
\[
\ds\int_M\di(X)d\mathfrak{m}=\ds\int_{\partial M} g_{\mathbf{n}}(\mathbf{n},X)\,d A,\tag{2.6}\label{olddivlem}
\]
where $dA=\mathbf{n}\rfloor d\mathfrak{m}$, and $\mathbf{n}$ is the unit outward normal vector field on $\partial M$, i.e., $F(\mathbf{n})=1$ and $ g_{\mathbf{n}}(\mathbf{n},Y)=0$ for any $Y\in T(\partial M)$.

Given a $C^2$-function $f$, set $\mathcal {U}=\{x\in M:\, df|_x\neq0\}$. The \textit{Laplacian} of $f$ is defined on $\mathcal {U}$ by
\begin{align*}
\Delta f:=\text{div}(\nabla f)=\frac{1}{\sigma(x)}\frac{\partial}{\partial x^i}\left(\sigma(x)g^{*ij}(df|_x)\frac{\partial f}{\partial x^j}\right),
\end{align*}
where $\sigma(x)$ is defined by (\ref{measure}) and $(g^{*ij})$ is the fundamental tensor of $F^*$.  As in Ohta et al. \cite{Ot}, we define
the distributional Laplacian of $u\in W^{1,2}_{\text{loc}}(M)$
in the weak sense by
\[
\ds\int_M v{\Delta} u d\mathfrak{m}=-\ds\int_M\langle\nabla u, dv\rangle d\mathfrak{m} \text{ for all }v\in C^\infty_0(M),
\]
where $\langle \nabla u, dv\rangle:= dv(\nabla u)$ at $x\in M$ denotes the canonical pairing between $T_xM$ and $T^*_xM.$

By (\ref{measure}),  the {\it distortion} $\tau$ and the {\it S-curvature} $\mathbf{S}$  of $(M,F,d\mathfrak{m})$ are defined  as
\begin{equation*}
\tau(y):=\log \frac{\sqrt{\det g_{ij}(x,y)}}{\sigma(x)}, \ \mathbf{S}(y):=\left.\frac{d}{dt}\right|_{t=0}[\tau(\dot{\gamma}_y(t))],\text{ for $y\in T_xM\backslash\{0\}$},
\end{equation*}
where $\sigma(x)$ is defined by (\ref{measure}) and $\gamma_y(t)$ is a  geodesic with $\dot{\gamma}_y(0)=y$.

 Given a point $o\in M$, we say $\mathbf{S}_o^+\geq 0$ (resp., $\mathbf{S}_o^+\leq 0$) if the S-curvature is nonnegative (resp., non-positive) along every minimal geodesic {\it from} $o$. On the other hand, we say
$\mathbf{S}_o^-\geq 0$ (resp., $\mathbf{S}_o^-\leq 0$) if the S-curvature is nonnegative (resp., non-positive) along every minimal geodesic {\it to} $o$. In particular, if $F$ is reversible, then $\mathbf{S}_o^+\geq 0$ (resp., $\mathbf{S}_o^+\leq 0$) if and only if
$\mathbf{S}_o^-\leq 0$ (resp., $\mathbf{S}_o^-\geq 0$).

The {\it Riemannian curvature} $R_y$ of $F$ is a family of linear transformations on tangent spaces. More precisely, set
$R_y:=R^i_k(y)\frac{\partial}{\partial x^i}\otimes dx^k$, where
\begin{align*}
R^i_{\,k}(y)&:=2\frac{\partial G^i}{\partial x^k}-y^j\frac{\partial^2G^i}{\partial x^j\partial y^k}+2G^j\frac{\partial^2 G^i}{\partial y^j \partial y^k}-\frac{\partial G^i}{\partial y^j}\frac{\partial G^j}{\partial y^k},
\end{align*}
and $G^i$'s are the geodesic constants defined in (\ref{geoedesiccon}).

Let $P:=\text{Span}\{y,v\}\subset T_xM$ be a plane. The {flag curvature} is defined by
\[
\mathbf{K}(y,v):=\frac{g_y\left( R_y(v),v  \right)}{g_y(y,y)g_y(v,v)-g^2_y(y,v)}.
\]

The  {\it Ricci curvature} at $y\neq 0$
is defined by
$\mathbf{Ric}(y):= \frac{R^i_{i}(y)}{F^2(y)}$. According to Ohta et al. \cite{Ot}, given $y\in SM$,
 the {\it weighted Ricci curvature} is defined by
\begin{align*}\mathbf{Ric}_N(y)=\left\{
\begin{array}{lll}
\mathbf{Ric}(y)+\left.\frac{d}{dt}\right|_{t=0}\mathbf{S}(\gamma_y(t))-\frac{\mathbf{S}^2(y)}{N-n}, && \text{ for }N\in (n,\infty),\\
\\
\underset{L\downarrow n}{\lim}\mathbf{Ric}_L(y), && \text{ for }N=n,\\
\\
\mathbf{Ric}(y)+\left.\frac{d}{dt}\right|_{t=0}\mathbf{S}(\gamma_y(t)),  && \text{ for }N=\infty.
\end{array}
\right.
\end{align*}
In particular,  bounding
$\mathbf{Ric}_n$ from below makes sense only if $\mathbf{S}=0$.

If $(M,F)$ is either forward complete or backward complete, then  there exists a  polar coordinate system around every point in $M$ (cf.\,Yuan et al. \cite[Proposition 3.2]{YZS} and Zhao et al. \cite[Section 3]{ZS}). Fixing an arbitrary point $o\in M$,
let $(t,y)$ denote the {\it polar coordinate system} around $o$ and write
\[
d\mathfrak{m}=: \hat{\sigma}_o(t,y)dt\wedge d\nu_o(y),\tag{2.7}\label{formulavolume}
\]
where $t:=r_+$ and $d\nu_o(y)$ is the Riemannian volume measure on $S_oM$ induced by $F$.

Since $S_oM$ is compact,
the integral $\int_{S_oM}e^{-\tau(y)}d\nu_o(y)$ is  finite. Particularly, this integral is equal to the volume of the standard $(n-1)$-unit Euclidean sphere if $d\mathfrak{m}$ is the Busemann-Hausdorff measure (cf. Shen \cite{Shen_Adv_Math} or Zhao et al. \cite{ZS}).

For any fixed $y\in S_oM$, we have
\[
\Delta t=\frac{\partial}{\partial t}\log( \hat{\sigma}_o(t,y)), \text{ for }0<t<i_y.
\]
In particular, Zhao et al. \cite[Lemma 3.1]{ZS} yields
\[
\lim_{r\rightarrow 0^+}\frac{ \hat{\sigma}_o(t,y)}{t^{n-1}}=e^{-\tau(y)}.\tag{2.8}\label{v14-2.1}
\]
According to Zhao et al. \cite[Theorem 4.3,\,Remark 5.3,\,Theorem 3.6]{ZS}, if $\mathbf{Ric}(\nabla t)\geq -(n-1)k^2$ and $\mathbf{S}(\nabla t)\geq -h^2$, then
\[
 \hat{\sigma}_o(t,y)\leq e^{-\tau(y)+h^2t}\mathfrak{s}^{n-1}_{-k^2}(t), \text{ for any }y\in S_oM,\ 0<t<i_y.\tag{2.9}\label{Riccompar2.8}
\]
where $\mathfrak{s}_{-k^2}(t)$ is the unique solution to the equation $f''(t)-k^2f(t)=0$ with $f(0)=0$, $f'(0)=1$.

\subsubsection{Reverse Finsler metric measure manifolds}
Given a FMMM $(M,F,d\mathfrak{m})$,
according to  Ohta et al. \cite{Ot}, the {\it reverse} of $F$ is defined by $\overleftarrow{F}(x,y):=F(x,-y)$, which is also a Finsler metric.  Clearly, $(M,F)$ is forward (resp., backward) complete if and only if $(M,\overleftarrow{F})$ is backward (resp., forward)  complete.

In this paper,
$(M,\overleftarrow{F},d\mathfrak{m})$ is called the {\it RFMMM} (i.e., {\it reverse Finsler metric measure manifold}).
Let $\overleftarrow{*}$  denote the geometric quantity $*$ defined by $\overleftarrow{F}$. Then we have
\begin{align*}\left\{
\begin{array}{lll}
\overleftarrow{r}_+(x)=r_-(x),\ \overleftarrow{r}_-(x)= r_+(x),&& \text{ for any }x\in M;\\
\\
\overleftarrow{\mathbf{K}}(y,v)=\mathbf{K}(-y,v),\ \overleftarrow{\mathbf{Ric}}(y)=\mathbf{Ric}(-y),\overleftarrow{\mathbf{S}}(y)=-\mathbf{S}(-y), && \text{ for any }y,v\in TM\backslash\{0\};\tag{2.10}\label{revesequn}\\
\\
\overleftarrow{\nabla} f=-\nabla (-f),\ \overleftarrow{\Delta}f=-\Delta (-f),  && \text{ for any }f\in C^2(M).
\end{array}
\right.
\end{align*}

\begin{remark}\label{Scurvature}One can use the polar coordinates to describe  of $\mathbf{S}^\pm_o$. More precisely,
let $(t,y)$ be the polar coordinate system around $o$ in $(M,F, d\mathfrak{m})$. Thus, one has
\[
\mathbf{S}_o^+\geq 0 \text{ (resp., $\leq 0$)} \Longleftrightarrow \mathbf{S}(\nabla t)\geq 0 \text{ (resp., $\leq 0$) }\text{ for any }y\in S_oM, \ 0<t<i_y.
\]
On the other hand, let $(\mathfrak{t},\mathfrak{y})$ denote the polar coordinate system around $o$ in $(M,\overleftarrow{F}, d\mathfrak{m})$. Then
\[
\mathbf{S}_o^-\geq 0 \text{ (resp., $\leq 0$)}  \Longleftrightarrow \overleftarrow{\mathbf{S}}(\overleftarrow{\nabla} \mathfrak{t})\leq 0 \text{ (resp., $\geq 0$) }\text{ for any }\mathfrak{y}\in \overleftarrow{S_oM}, \ 0<\mathfrak{t}<\overleftarrow{i_{\mathfrak{y}}}.
\]
\end{remark}

\section{Hardy inequalities for  distance functions}\label{distanceHardy}
In this section, we study the Hardy inequalities concerned with distance functions and show Theorem \ref{frirstcurHard}-Theorem \ref{cofunRic}. Our approach  is mainly based on a generalization of the divergence
theorem in D'Ambrosio \cite{D} together with the sharp volume comparison for arbitrary measures in Zhao et al. \cite{ZS}. For simplicity of presentation, we introduce some notations, which are used throughout this paper.

\smallskip

\noindent \textbf{Notations:}
(1) Let $\Omega$ be a domain (i.e., a connected open subset) in a forward complete Finsler manifold $(M,F)$. We say that {\it $\Omega$ is a natural domain} if one of the following statements holds:

\smallskip

(i) $\Omega\subset M$ is a proper domain with smooth non-empty boundary;

\smallskip

(ii) $\Omega=M$ if $M$ is noncompact.

\smallskip

\noindent(2) Let $\Omega$ be a natural domain in a forward complete FMMM $(M,F, d\mathfrak{m})$.
We say that a vector filed $X$ belongs to $L^1_{\lo}(T\Omega)$ if  $\int_K F(X)d\mathfrak{m}$ is finite for any compact set $K\subset \Omega$.
Given a vector filed $X\in L^1_{\lo}(T\Omega)$ and a nonnegative function  $f_X\in L^1_{\lo}(\Omega)$, we say that {\it $f_X\leq \di X$ in the weak sense} if
\[
\int_\Omega u f_X d\mathfrak{m}\leq -\int_\Omega \langle  X, du \rangle d\mathfrak{m}, \ \forall \,u\in C^1_0(\Omega) \text{ with }u\geq 0,
\]
where $\langle  X,du \rangle:=du(X)$.

\smallskip

\noindent(3) A quadruple $(M,o,F,d\mathfrak{m})$ is called a {\it PFMMM} (i.e., {\it pointed Finsler metric measure manifold}) if $(M,F,d\mathfrak{m})$ is a Finsler metric measure manifold and $o$ is a point in $M$. In such a space, $r_+$ (resp., $r_-$) is always defined as the distance from $o$ (resp., to $o$), i.e., $r_+(x)=d_F(o,x)$ (resp., $r_-(x)=d_F(x,o)$). In particular, $r_\pm$ are denoted by $r$ if $F$ is reversible.

\smallskip

\subsection{Main tools}
In this subsection, we present the main tools.
Inspired by D'Ambrosio\cite{D}, we first establish a divergence theorem.

\begin{theorem}\label{divlemf}Let $\Omega$ be a natural domain in a forward complete FMMM $(M,F, d\mathfrak{m})$.
Let $X\in L^1_{\lo}(T\Omega)$ be a vector filed  and let $f_X\in L^1_{\lo}(\Omega)$ be a nonnegative function.
Given $p>1$, suppose the following conditions hold:

\smallskip

\ \ \ \ \ (i) $f_X\leq \di X$ in the weak sense;
\ \ \ \ \ (ii) $F^p(X)/f_X^{p-1}\in L^1_{\lo}(\Omega)$.

\smallskip

\noindent Then  we have
\begin{align*}
(1) &&p^p {\int_\Omega\frac{F^p(-X)}{f_X^{p-1}}\max\{F^{*p}(\pm d u)\}d\mathfrak{m}}    \geq{\int_\Omega |u|^p f_Xd\mathfrak{m}},\ \forall\,u\in C^\infty_0(\Omega),\\
(2)&& p^p {\int_\Omega\frac{F^p(X)}{f_X^{p-1}}\max\{F^{*p}(\pm d u)\}d\mathfrak{m}}    \geq{\int_\Omega |u|^p f_Xd\mathfrak{m}},\ \forall\,u\in C^\infty_0(\Omega).
\end{align*}
\end{theorem}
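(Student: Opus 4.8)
The plan is to test the weak divergence hypothesis (i) against the admissible function $\varphi:=|u|^p$ and then absorb the resulting gradient term by Hölder's inequality. First I would observe that for $p>1$ and $u\in C^\infty_0(\Omega)$ the function $\varphi=|u|^p$ lies in $C^1_0(\Omega)$ with $d\varphi=p\,|u|^{p-2}u\,du$ (the exponent $p-1>0$ guarantees continuity of the differential at the zeros of $u$), and $\varphi\geq0$, so it is a legitimate test function in (i). Substituting it gives
\[
\int_\Omega |u|^p f_X\,d\mathfrak{m}\leq -\int_\Omega \langle X,d\varphi\rangle\,d\mathfrak{m}=-p\int_\Omega |u|^{p-2}u\,du(X)\,d\mathfrak{m},
\]
where the pairing is integrable on the compact support of $u$ because $X\in L^1_{\lo}(T\Omega)$ and $du$ is bounded there.

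The second step is the pointwise estimate of the integrand. The key elementary fact, coming straight from the definition (2.2) of $F^*$, is that $du(Y)\leq F^*(du)\,F(Y)$ and $-du(Y)\leq F^*(-du)\,F(Y)$ for every $Y$, whence $|du(Y)|\leq \max\{F^*(\pm du)\}\,F(Y)$. For part (2) I would apply this with $Y=X$ to obtain $-|u|^{p-2}u\,du(X)\leq |u|^{p-1}\max\{F^*(\pm du)\}\,F(X)$; for part (1) I would instead rewrite $-du(X)=du(-X)$ and apply the estimate with $Y=-X$, which produces $F(-X)$ in place of $F(X)$. This is precisely where the irreversibility of $F$ forces the two separate statements, and keeping these signs straight is the only genuinely delicate bookkeeping in the argument, since $F(X)\neq F(-X)$ and $F^*(du)\neq F^*(-du)$ in general.

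Finally, writing the integrand $|u|^{p-1}\max\{F^*(\pm du)\}\,F(X)$ as the product $\big(|u|^{p-1}f_X^{(p-1)/p}\big)\big(\max\{F^*(\pm du)\}\,F(X)\,f_X^{-(p-1)/p}\big)$ and applying Hölder with exponents $p/(p-1)$ and $p$ yields
\[
\int_\Omega |u|^p f_X\,d\mathfrak{m}\leq p\left(\int_\Omega|u|^p f_X\,d\mathfrak{m}\right)^{\frac{p-1}{p}}\left(\int_\Omega\frac{F^p(X)}{f_X^{p-1}}\max\{F^{*p}(\pm du)\}\,d\mathfrak{m}\right)^{\frac1p}.
\]
Here hypothesis (ii) guarantees the last factor is finite (and forces $F(X)=0$ almost everywhere on $\{f_X=0\}$, so that set may be discarded in the Hölder splitting), while $\int_\Omega|u|^p f_X\,d\mathfrak{m}$ is finite because $u$ is bounded with compact support and $f_X\in L^1_{\lo}(\Omega)$. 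If this quantity vanishes the inequality is trivial; otherwise I divide by its $\frac{p-1}{p}$ power and raise to the $p$-th power to reach part (2) with $F^p(X)$, and the parallel computation taking $Y=-X$ gives part (1) with $F^p(-X)$. Beyond the sign bookkeeping and this routine finiteness check, I expect no further obstacle.
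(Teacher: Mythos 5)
Your proposal is correct and follows essentially the same route as the paper: testing the weak divergence inequality against $|u|^p$ (the paper writes it as $(u^2)^{p/2}$), bounding the pairing via the duality $|u\,du(Y)|\le |u|\max\{F^*(\pm du)\}F(Y)$ with $Y=X$ or $Y=-X$ according to which part is being proved, and closing with the same Hölder splitting by $f_X^{(p-1)/p}$. Your explicit remarks on the sign bookkeeping for the irreversible case and on the set $\{f_X=0\}$ are consistent with (and slightly more careful than) the paper's treatment.
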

\begin{proof} Since the reversibility $\lambda_F(K)$ is finite for any compact set $K$, Condition (ii) implies $F^p(-X)/f_X^{p-1}\in L^1_{\lo}(\Omega)$.  Now we show (1).
It is easy to check $F^*(udu)\leq |u|\max\{F^*(\pm du)\}$, which together with the assumption, (\ref{2.2newineq}) and
the H\"older inequality   yields
\begin{align*}
&\int_\Omega |u|^p f_Xd\mathfrak{m}{\leq}-\int_\Omega \langle X, d(u^2)^{p/2} \rangle d\mathfrak{m}=\int_\Omega p|u|^{p-2}\langle -X, u du \rangle d\mathfrak{m}\\
\leq& p\int_\Omega |u|^{p-2} F(-X)F^*(u du)d\mathfrak{m}\leq p\int_\Omega |u|^{p-1}F(-X)\max\{F^*(\pm du)\}d\mathfrak{m}\\
{\leq}& p\left( \int_\Omega|u|^p f_Xd\mathfrak{m} \right)^{\frac{p-1}p}\left(\int_\Omega \frac{F(-X)^p}{f_X^{p-1}}\max\{F^{*p}(\pm du)\}d\mathfrak{m} \right)^{\frac{1}p}.
\end{align*}
Hence, (1) follows. In order to prove (2),  note
$\langle -X, u du \rangle\leq F(X)F^*(-u d u)\leq |u|F(X)\max\{F^*(\pm du)\}$.
Then the rest of the proof is the same as above.
\end{proof}

The following result  also plays an important role in establishing the Hardy inequalities.
\begin{lemma}\label{centerinteg}
Let $(M,o,F,d\mathfrak{m})$ be an $n$-dimensional forward or backward complete PFMMM. Let $\mathfrak{r}$ denote either $r_+$ or $r_-$ and set $\mathfrak{B}_o(s):=\{x\in M:\,\rr(x)<s\}$. Then

\smallskip

(1) For any
$k\in (-\infty,n)$, we have
\[
\ds\lim_{\epsilon\rightarrow 0^+}\int_{\mathfrak{B}_o(\epsilon)\backslash\{o\}} {{\rr}^{-k} }d\mathfrak{m}=0.
\]

\smallskip

(2)
For any $R>0$, we have
\[
\ds\lim_{\epsilon\rightarrow 0^+}\int_{\mathfrak{B}_o(R)\backslash \mathfrak{B}_o(\epsilon)} {\rr}^{-n} d\mathfrak{m}=+\infty.
\]
\end{lemma}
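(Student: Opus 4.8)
The plan is to reduce both assertions to elementary one–dimensional integrals by passing to a polar coordinate system centred at $o$, controlling the density through the asymptotics (\ref{v14-2.1}). First I would dispose of the case $\mathfrak{r}=r_-$ by the reverse construction: replacing $F$ with $\overleftarrow{F}$ leaves $d\mathfrak{m}$ unchanged, turns $r_-$ into $\overleftarrow{r}_+$ by (\ref{revesequn}), and keeps the manifold forward or backward complete, so (\ref{formulavolume}) and (\ref{v14-2.1}) apply verbatim to $(M,\overleftarrow{F},d\mathfrak{m})$. Thus it suffices to treat $\mathfrak{r}=r_+$.

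For $\mathfrak{r}=r_+$ I would fix the polar coordinates $(t,y)$ with $t=r_+$ and write $d\mathfrak{m}=\hat{\sigma}_o(t,y)\,dt\wedge d\nu_o(y)$ as in (\ref{formulavolume}). Because the manifold is forward or backward complete we have $\mathfrak{i}_o>0$, so for every $\epsilon<\mathfrak{i}_o$ the punctured ball $\mathfrak{B}_o(\epsilon)\setminus\{o\}$ lies in the domain of these coordinates (it misses the null set $\mathrm{Cut}_o$) and
\[
\int_{\mathfrak{B}_o(\epsilon)\setminus\{o\}}\mathfrak{r}^{-k}\,d\mathfrak{m}=\int_{S_oM}\int_0^\epsilon t^{-k}\hat{\sigma}_o(t,y)\,dt\,d\nu_o(y).
\]
The step I expect to be the main obstacle is upgrading the pointwise limit (\ref{v14-2.1}) to a bound uniform in $y$: since $(t,y)\mapsto\hat{\sigma}_o(t,y)/t^{n-1}$ extends continuously to $t=0$ with the positive value $e^{-\tau(y)}$, compactness of $S_oM$ should furnish $\delta\in(0,\mathfrak{i}_o)$ and constants $0<c\le C$ with $c\,t^{n-1}\le\hat{\sigma}_o(t,y)\le C\,t^{n-1}$ for all $0<t<\delta$ and $y\in S_oM$. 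Establishing this joint continuity up to the centre is the delicate point; everything afterwards is a one–variable computation.

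With these bounds in hand, part (1) would follow for $k<n$ from the upper estimate
\[
\int_{\mathfrak{B}_o(\epsilon)\setminus\{o\}}\mathfrak{r}^{-k}\,d\mathfrak{m}\le C\,\nu_o(S_oM)\int_0^\epsilon t^{\,n-1-k}\,dt=\frac{C\,\nu_o(S_oM)}{n-k}\,\epsilon^{\,n-k}\xrightarrow[\epsilon\to0^+]{}0,
\]
the exponent $n-1-k>-1$ guaranteeing both integrability at $0$ and the vanishing limit. For part (2) I would fix $R>0$, choose $\rho$ with $0<\rho<\min\{R,\delta\}$ so that the smaller annulus $\mathfrak{B}_o(\rho)\setminus\mathfrak{B}_o(\epsilon)$ lies inside the polar chart (here $\rho<\mathfrak{i}_o$), restrict the integral to this annulus (legitimate since the integrand is nonnegative), and apply the lower bound to obtain
\[
\int_{\mathfrak{B}_o(R)\setminus\mathfrak{B}_o(\epsilon)}\mathfrak{r}^{-n}\,d\mathfrak{m}\ge c\,\nu_o(S_oM)\int_\epsilon^\rho t^{-1}\,dt=c\,\nu_o(S_oM)\,\log\frac{\rho}{\epsilon}\xrightarrow[\epsilon\to0^+]{}+\infty,
\]
which is the desired divergence.
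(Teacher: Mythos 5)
Your proposal is correct and follows essentially the same route as the paper: reduce $r_-$ to $r_+$ via the reverse metric $\overleftarrow{F}$, pass to polar coordinates, use (\ref{v14-2.1}) to bound $\hat{\sigma}_o(t,y)$ above and below by constant multiples of $t^{n-1}$ near $t=0$ uniformly in $y\in S_oM$, and reduce to the one-dimensional integrals of $t^{n-1-k}$ and $t^{-1}$. The only cosmetic difference is that the paper keeps the factor $e^{-\tau(y)}$ explicit (sandwiching $\hat{\sigma}_o$ between $\tfrac12 e^{-\tau(y)}t^{n-1}$ and $2e^{-\tau(y)}t^{n-1}$ and integrating $e^{-\tau(y)}$ over the compact $S_oM$), whereas you absorb it into uniform constants $c,C$.
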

\begin{proof}
(i) Suppose $\mathfrak{r}=r_+$. Let $(t,y)$ be the polar coordinate system around $o$. In view of (\ref{v14-2.1}), there exists  an $\epsilon_0\in (0, \mathfrak{i}_o)$ such that for any $t\in (0,\epsilon_0)$,
\[
\frac12 e^{-\tau(y)}t^{n-1}\leq \hat{\sigma}_o(t,y)\leq 2e^{-\tau(y)}t^{n-1},\ \forall\,y\in S_oM.
\]
Therefore, if $k\in (-\infty,n)$, (\ref{formulavolume}) together with the inequality above furnishes
\begin{align*}
\int_{B^+_o(\epsilon)\backslash\{o\}}r^{-k}_+d\mathfrak{m}= \int_{S_oM}d\nu_o(y)\int_0^\epsilon t^{-k}\hat{\sigma}_o(t,y)dt\leq \frac{2\epsilon^{n-k}}{n-k}\int_{S_oM}e^{-\tau(y)}d\nu_o(y)\rightarrow0, \text{ as }\epsilon\rightarrow0^+.
\end{align*}
Similarly, we obtain
\begin{align*}
\int_{B^+_o(R)\backslash B^+_o(\epsilon)}r^{-n}_+d\mathfrak{m}
\geq\frac12\ln\left( \frac{\min\{R,\mathfrak{i}_o\}}{\epsilon} \right)\int_{S_oM}e^{-\tau(y)}d\nu_o(y)\rightarrow+\infty, \text{ as }\epsilon\rightarrow0^+.
\end{align*}

(ii) Suppose $\mathfrak{r}=r_-$. In this case, we consider the   RFMMM $(M,\overleftarrow{F},d\mathfrak{m})$.
Thus, the above results hold for $\overleftarrow{r_+}(x):=d_{\overleftarrow{F}}(o,x)$. The assertions  then follow from  $r_-(x)=\overleftarrow{r_+}(x)$.
\end{proof}

\begin{definition}
Let $\Omega\subset M$ be a natural domain in a FMMM  $(M,F,d\mathfrak{m})$.
Given $p> 1$, for an arbitrary function $f\in C^\infty(\Omega)$,
the {\it $p$-Laplacian} of $f$ is defined  as
 \[
 \Delta_p f:=\di\left(F^{p-2}(\nabla f)\, \nabla f\right) \text{ on  }\,\mathcal {U}:=\{x\in M: df|_x\neq 0\}.
\]


\noindent Given $c\in \mathbb{R}$,
we say that a function $\rho(x)\in W^{1,p}_{\lo}(\Omega)$ satisfies {\it $-c\Delta_p\rho\geq 0$ in the weak sense} if
\[
c\int_\Omega  F^{p-2}(\nabla \rho)\langle\nabla \rho,du\rangle d\mathfrak{m}\geq 0, \ \forall \,u\in C^1_0(\Omega) \text{ with }u\geq 0.
\]
\end{definition}


\begin{lemma}\label{mainlemmforcr}
Let $(M,o,F,d\mathfrak{m})$ be a  forward complete PFMMM and let $\Omega\subset M$ be a natural domain.

\smallskip

\noindent (i) Suppose that $\alpha>0$ and $\beta\in \mathbb{R}$ satisfy the following conditions:

\smallskip

 (1) $r_+^{(\alpha-1)(p-1)},r_+^\beta,r_+^{\beta+p}\in L^1_{\lo}(\Omega)$;

 (2) $-c\Delta_p(r_+^\alpha)\geq 0$ in the weak sense, where $c=\alpha[(\alpha-1)(p-1)-\beta-1]$.

\smallskip

\noindent Then we have
\begin{align*}
\int_{\Omega} r_+^{\beta+p}\max\{F^{*p}(\pm du)\}d \mathfrak{m}\geq (\vartheta_{\alpha,\beta,p})^p\int_{\Omega} r_+^\beta |u|^p d\mathfrak{m},\ \forall\,u\in C^\infty_0(\Omega),\tag{3.1}\label{5.1.1}
\end{align*}
 where
 \[
 \vartheta_{\alpha,\beta,p}:=\frac{|(\alpha-1)(p-1)-\beta-1|}{p}.
 \]

\smallskip

\noindent (ii) Suppose that $\alpha>0$ and $\beta\in \mathbb{R}$ satisfy the following conditions:

\smallskip

 (1') $r_-^{(-\alpha-1)(p-1)},r_-^\beta,r_-^{\beta+p}\in L^1_{\lo}(\Omega)$;

 (2') $-c\Delta_p(r_-^{-\alpha})\geq 0$ in the weak sense, where $c=\alpha[(\alpha+1)(p-1)+\beta+1]$.

\smallskip

\noindent Then we have
\begin{align*}
\int_{\Omega} r_-^{\beta+p}\max\{F^{*p}(\pm du)\}d \mathfrak{m}\geq (\vartheta_{-\alpha,\beta,p})^p\int_{\Omega} r_-^\beta |u|^p d\mathfrak{m},\ \forall\,u\in C^\infty_0(\Omega),\tag{3.2}\label{5.1.2}
\end{align*}
 where
 \[
 \vartheta_{-\alpha,\beta,p}:=\frac{|(\alpha+1)(p-1)+\beta+1|}{p}.
 \]
\end{lemma}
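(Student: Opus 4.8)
The plan is to derive both inequalities from Theorem \ref{divlemf} by feeding it an explicit pair $(X,f_X)$ manufactured from the distance function; I treat part (i) in detail, part (ii) being strictly parallel.

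Write $\delta:=\beta+1-(\alpha-1)(p-1)$, so that the announced constant is $\vartheta_{\alpha,\beta,p}=|\delta|/p$ and the scalar in condition (2) is $c=-\alpha\delta$. Choose $f_X:=|\delta|\,r_+^{\beta}$ and $X:=(\operatorname{sign}\delta)\,r_+^{\beta+1}\nabla r_+$. Because $F(\nabla r_+)=1$ almost everywhere, one has $F(X)=r_+^{\beta+1}$ when $\delta>0$ and $F(-X)=r_+^{\beta+1}$ when $\delta<0$; in either case $F^p(\pm X)/f_X^{p-1}=|\delta|^{1-p}\,r_+^{\beta+p}$. Thus hypothesis (ii) of Theorem \ref{divlemf} reduces to $r_+^{\beta+p}\in L^1_{\lo}(\Omega)$, while $X\in L^1_{\lo}(T\Omega)$ follows from $r_+^{\beta}\in L^1_{\lo}(\Omega)$ (hence $r_+^{\beta+1}\in L^1_{\lo}$); these are two of the three hypotheses in (1).

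The decisive point is the weak divergence bound $f_X\le\di X$. I rewrite $X=\alpha^{1-p}(\operatorname{sign}\delta)\,r_+^{\delta}\,W$ with $W:=F^{p-2}(\nabla r_+^{\alpha})\nabla r_+^{\alpha}=\alpha^{p-1}r_+^{(\alpha-1)(p-1)}\nabla r_+$, so that $\di W=\Delta_p(r_+^{\alpha})$ and $W\in L^1_{\lo}$ by the remaining hypothesis $r_+^{(\alpha-1)(p-1)}\in L^1_{\lo}$. For a nonnegative $u\in C^1_0(\Omega)$ put $v:=r_+^{\delta}u\ge0$. The product rule $r_+^{\delta}\langle W,du\rangle=\langle W,dv\rangle-u\langle W,d(r_+^{\delta})\rangle$ together with the a.e.\ identity $\langle W,dr_+\rangle=\alpha^{p-1}r_+^{(\alpha-1)(p-1)}$ (which uses $\langle dr_+,\nabla r_+\rangle=F^2(\nabla r_+)=1$ and $\delta-1+(\alpha-1)(p-1)=\beta$) gives $-\int_\Omega\langle X,du\rangle\,d\mathfrak m=-\alpha^{1-p}(\operatorname{sign}\delta)\int_\Omega\langle W,dv\rangle\,d\mathfrak m+|\delta|\int_\Omega u\,r_+^{\beta}\,d\mathfrak m$. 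Condition (2), namely $-\alpha\delta\int_\Omega F^{p-2}(\nabla r_+^{\alpha})\langle\nabla r_+^{\alpha},dv\rangle\,d\mathfrak m\ge0$, forces the first summand to be nonnegative, whence $f_X\le\di X$ weakly. Applying Theorem \ref{divlemf}(2) if $\delta>0$ and Theorem \ref{divlemf}(1) if $\delta<0$ (the case $\delta=0$ being trivial) yields $(\ref{5.1.1})$ with constant $(|\delta|/p)^p=(\vartheta_{\alpha,\beta,p})^p$. Part (ii) is identical once one replaces $r_+^{\alpha}$ by $\rho:=r_-^{-\alpha}$, uses the forward unit field $\nabla(-r_-)$ (so $\nabla\rho=\alpha r_-^{-\alpha-1}\nabla(-r_-)$ and $\langle\nabla(-r_-),dr_-\rangle=-1$), sets $\delta_-:=(\alpha+1)(p-1)+\beta+1$ and $X:=-(\operatorname{sign}\delta_-)\,r_-^{\beta+1}\nabla(-r_-)$; the extra sign coming from $\langle\nabla(-r_-),dr_-\rangle=-1$ is exactly what turns condition (2'), with $c=\alpha\delta_-$, into the required inequality $(\ref{5.1.2})$. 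Alternatively one may pass to the reverse manifold $(M,\overleftarrow F,d\mathfrak m)$ and invoke the relations $(\ref{revesequn})$.

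I expect the genuine obstacle to be the rigorous justification of the integration by parts above, since $v=r_\pm^{\delta}u$ is not a legitimate test function for condition (2)/(2') as written: $r_\pm$ is only Lipschitz across $\operatorname{Cut}_o$, and near $o$ the factor $r_\pm^{\delta}$ blows up when $\delta<0$. The remedy is to insert a radial cutoff $\chi_\epsilon$ that vanishes on $\mathfrak B_o(\epsilon)$, carry out the computation for $\chi_\epsilon v$, and show that the resulting error terms are dominated by $\int_{\mathfrak B_o(\epsilon)}r_\pm^{\beta}\,d\mathfrak m$ and $\int_{\mathfrak B_o(\epsilon)}r_\pm^{(\alpha-1)(p-1)}\,d\mathfrak m$, which tend to $0$ as $\epsilon\to0^+$ by Lemma \ref{centerinteg} combined with the integrability hypotheses (1); the null set $\operatorname{Cut}_o$ is handled by the usual approximation of Lipschitz functions. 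This is precisely the step that consumes all three conditions in (1).
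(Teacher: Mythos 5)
Your argument is correct and follows essentially the same route as the paper's proof: the same pair $X=\pm\,r_{\pm}^{\beta+1}\nabla r_+$ (resp.\ $\mp\,r_-^{\beta+1}\nabla(-r_-)$) and $f_X=|\delta|\,r_{\pm}^{\beta}$ up to the harmless scalar factor $\alpha$, the same test function $v=r_{\pm}^{\delta}u$ fed into condition (2)/(2'), and the same case split on the sign of $\delta$ to invoke Theorem \ref{divlemf}(1) or (2). The only difference is the regularization device: where you insert a spatial cutoff near $o$ and control the errors via Lemma \ref{centerinteg}, the paper instead replaces the singular factor by $(\epsilon+r_+)^{\delta}$ and passes to the limit by dominated convergence using the majorants $r_+^{(\alpha-1)(p-1)}(\epsilon+r_+)^{\delta-1}\leq r_+^{\beta}$ and $r_+^{(\alpha-1)(p-1)}(\epsilon+r_+)^{\delta}\leq r_+^{\beta+1}$ --- which is exactly where the three integrability hypotheses in (1) are consumed, as you anticipated.
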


\begin{proof}
 (i) Provided that $-\Delta_p (r_+^\alpha)\geq 0$ and $c>0$, we set
\[
X:=-\alpha r_+^{\beta+1}\nabla r_+,\ f_X:=c r_+^\beta.
\]
Clearly, $f_X\in L^1_{\lo}(\Omega)$. The H\"older inequality together with Condition (1) implies $r^{\beta+1}_+\in L^1_{\lo}(\Omega)$ and hence,
$X\in L^1_{\lo}(T\Omega)$.
Moreover,  $F^{p}(X)/f^{p-1}_X\in L^1_{\lo}(\Omega)$ because $r_+^{\beta+p}\in L^1_{\lo}(\Omega)$.

 Given $\epsilon>0$ and $u\in C^1_0(\Omega)$ with $u\geq 0$, set $r_\epsilon:=\epsilon+r_+$ and $ v:=r_\epsilon^{-\frac{c}a}u$.
  Since $-\Delta_p(r_+^\alpha)\geq 0$ and  $r_+^{(\alpha-1)(p-1)}\in L^1_{\lo}(\Omega)$, we have $\int_{\Omega} \langle F^{p-2}(\nabla r_+^\alpha)\nabla r_+^\alpha, dv\rangle d \mathfrak{m}\geq 0$, that is,
\begin{align*}
c\alpha^{p-2}\int_{\Omega}  r_+^{(\alpha-1)(p-1)}u r_\epsilon^{-\frac{c}a-1}d\mathfrak{m}\leq \alpha^{p-1}\int_{\Omega} r_+^{(\alpha-1)(p-1)}r_\epsilon^{-\frac{c}a}\langle \nabla r_+,du\rangle d\mathfrak{m}.\tag{3.3}\label{5.3newneed}
\end{align*}
Since
$
r_+^{(\alpha-1)(p-1)}r_\epsilon^{-\frac{c}a-1}\leq r_+^\beta\in L^1_{\lo}(\Omega)$ and $r_+^{(\alpha-1)(p-1)}r_\epsilon^{-\frac{c}a}\leq r_+^{\beta+1}\in L^1_{\lo}(\Omega)$,
the  Lebesgue's dominated convergence theorem together with (\ref{5.3newneed}) yields
\[
\int_{\Omega} cr^{\beta}_+ud\mathfrak{m}\leq\int_{\Omega}\alpha r^{\beta+1}_+\langle \nabla r_+,du\rangle d\mathfrak{m},
\]
that is, $f_X\leq \di X$ in the weak sense. Now (\ref{5.1.1}) follows from Theorem \ref{divlemf} (1).

In the case when $-\Delta_p (r^\alpha_+)\leq 0$ and $c<0$, set $X:=\alpha r_+^{\beta+1}\nabla r_+,\ f_X:=-c r_+^\beta$ and $v:=r_+^{-c/\alpha}u$. Then (\ref{5.1.1}) follows from
a similar argument and Theorem \ref{divlemf} (2).

(ii) In order to show  (\ref{5.1.2}), one set
\begin{align*}\left\{
\begin{array}{lll}
X:=-\alpha r^{\beta+1}\nabla(-r_-),\ f_X:=cr^\beta_-,\ v:=r_-^{\frac{c}a}u, && \text{if }c>0\text{ and } -\Delta_p(r_-^{-\alpha})\geq0,\\
\\
X:=\alpha r^{\beta+1}\nabla(-r_-), \ f_X:=-cr^\beta_-,\ v:=(r_-+\epsilon)^{\frac{c}a}u, && \text{if }c<0\text{ and } -\Delta_p(r_-^{-\alpha})\leq0.
\end{array}
\right.
\end{align*}
Then the proof follows in a similar manner and hence, we omit it.
\end{proof}

We introduce  the following space  to investigate the sharpness of constants of Hardy inequalities.

\begin{definition}\label{DefDS}
Let $(M,o,F,d\mathfrak{m})$ be an $n$-dimensional complete reversible PFMMM and let $\Omega\subset M$ be a natural domain.  Given $p>1$ and $\beta\in \mathbb{R}$, suppose $o\notin \Omega$ if $\beta\leq -n$.
Denote by $D^{1,p}(\Omega,r^{\beta+p})$  the closure of  $C^\infty_0(\Omega)$ with respect to the norm
\[
\|u\|_D:=\left( \int_{\Omega}r^{\beta} {|u|^p}d\mathfrak{m}+\int_\Omega r^{p+\beta}F^{*p}(d{u})d \mathfrak{m}\right)^{\frac1p},\tag{3.4}\label{norm3.4}
\]
where $r(x):=d_F(o,x)$.
\end{definition}
Lemma \ref{centerinteg} implies that both $\|\cdot\|_D$ and  $D^{1,p}(\Omega,r^{\beta+p})$ are well-defined.
The following lemma provides the extremal functions for some Hardy inequalities.
\begin{lemma}\label{impsharplem}
Let $(M,o,F,d\mathfrak{m})$ be a  complete reversible PFMMM with $\mathbf{S}_o^+\geq 0$ and either $\mathbf{Ric}\geq0$ or $\mathbf{Ric}_\infty\geq 0$. Suppose that $p,\beta\in \mathbb{R}$ and $\Omega$ satisfy one of the following conditions:

\smallskip

(1) $\beta>-n$, $p>1$ and $\Omega:=M$ is noncompact;
\ \ \ (2) $\beta<-n$, $p>\max\{1,-n-\beta\}$ and $\Omega:=M\backslash\{o\}$.

\smallskip

\noindent For any  $\delta\in (0,1)$ and any $s\in (0,\min\{1, \mathfrak{i}_o/2\})$, set $c(\delta):= |n+\beta|/p+\delta/p$ and
\begin{align*}v(x):=\left\{
\begin{array}{lll}
\left(\frac{r(x)}{s}\right)^{c(\delta)},\ \text{ if  }x\in B_o(s),\\
\\
\left(\frac{r(x)}{s}\right)^{-c(\delta/2)}, \ \text{ if  }x\in M\setminus {B_o(s)}.
\end{array}
\right.
\end{align*}
Thus, $v\in D^{1,p}(\Omega,r^{p+\beta})$. In particular,
\[
c^p(\delta)\int_\Omega r^\beta |v|^p  d\mathfrak{m}>\int_\Omega r^{\beta+p} F^{*p}(dv)d\mathfrak{m}.\tag{3.5}\label{fakeinq4.4}
\]

\end{lemma}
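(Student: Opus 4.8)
The plan is to exploit that $v$ is a radial function, $v=\phi(r)$ with $\phi(t)=(t/s)^{c(\delta)}$ for $t<s$ and $\phi(t)=(t/s)^{-c(\delta/2)}$ for $t\geq s$, and to reduce every integral to a one-dimensional computation in the polar coordinate system $(t,y)$ around $o$. Since $F$ is reversible we have $F^*(dr)=F^*(-dr)=1$ a.e., so $F^*(dv)=F^*(\phi'(r)\,dr)=|\phi'(r)|$ irrespective of the sign of $\phi'$; this is the one place reversibility is essential. A direct differentiation then gives the two pointwise identities
\[
r^{\beta+p}F^{*p}(dv)=c(\delta)^p\,r^\beta|v|^p \ \text{ on } B_o(s),\qquad r^{\beta+p}F^{*p}(dv)=c(\delta/2)^p\,r^\beta|v|^p \ \text{ on } M\setminus B_o(s),
\]
because on each region both sides are a constant multiple of the same power of $t$. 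Granting that all the integrals below are finite, subtracting yields
\[
c(\delta)^p\int_\Omega r^\beta|v|^p\,d\mathfrak{m}-\int_\Omega r^{\beta+p}F^{*p}(dv)\,d\mathfrak{m}=\big(c(\delta)^p-c(\delta/2)^p\big)\int_{M\setminus B_o(s)}r^\beta|v|^p\,d\mathfrak{m},
\]
which is strictly positive since $c(\delta)>c(\delta/2)>0$ and the last integral is positive; this is exactly \eqref{fakeinq4.4}.

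Second, I would establish finiteness and positivity of the integrals. Writing $d\mathfrak{m}=\hat{\sigma}_o(t,y)\,dt\wedge d\nu_o(y)$, I use \eqref{v14-2.1}, i.e. $\hat{\sigma}_o(t,y)\sim e^{-\tau(y)}t^{n-1}$ as $t\to0^+$, to control the behaviour near $o$, and the volume comparison \eqref{Riccompar2.8} for the behaviour at large $t$. Under either $\mathbf{Ric}\geq0$ or $\mathbf{Ric}_\infty\geq0$ together with $\mathbf{S}_o^+\geq0$, the comparison gives the bound $\hat{\sigma}_o(t,y)\leq e^{-\tau(y)}t^{n-1}$ for all $0<t<i_y$ (in the $\mathbf{Ric}_\infty$ case this is the Finsler analogue of the Wei--Wylie estimate, where $\mathbf{S}_o^+\geq0$ plays the role of $\partial_r f\geq0$ and forces $\hat{\sigma}_o/t^{n-1}$ to be non-increasing). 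Substituting the explicit powers, the inner integral behaves like $\int_0 t^{\,\beta+pc(\delta)+n-1}\,dt$ and the outer one like $\int^{\infty} t^{\,\beta-pc(\delta/2)+n-1}\,dt$; since $pc(\delta)=|n+\beta|+\delta$ and $pc(\delta/2)=|n+\beta|+\delta/2$, the exponent conditions $\beta+pc(\delta)+n>0$ and $\beta-pc(\delta/2)+n<0$ hold in both cases (1) and (2) (using $\delta>0$ and the sign of $n+\beta$ appropriate to each case), so all the integrals converge, and the outer integral is plainly positive.

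Third --- and this is the main technical step --- I would verify $v\in D^{1,p}(\Omega,r^{\beta+p})$, i.e. approximate $v$ by $C^\infty_0(\Omega)$ functions in the norm $\|\cdot\|_D$. I would first truncate at infinity by $\eta_R v$, where $\eta_R=1$ on $B_o(R)$ and $\eta_R=0$ off $B_o(2R)$, so that the support becomes compact; the error $\|v-\eta_R v\|_D$ is controlled by the tails $\int_{M\setminus B_o(R)}$ of the convergent integrals together with the cutoff-gradient term $\int r^{\beta+p}|v|^p F^{*p}(d\eta_R)$, which is $O(R^{-p})$ times an annular integral and hence vanishes as $R\to\infty$. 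In case (2), where $\Omega=M\setminus\{o\}$, I would additionally excise a neighbourhood of $o$ with a cutoff $\zeta_\epsilon$ vanishing on $B_o(\epsilon)$; here the decay $v\sim(r/s)^{c(\delta)}$ makes the corresponding cutoff-gradient error of order $\epsilon^{\delta}$, so it too vanishes. The resulting compactly supported Lipschitz functions --- the only non-smoothness being the kink at $r=s$ and the null set $\mathrm{Cut}_o$ --- are then approximated by genuine $C^\infty_0(\Omega)$ functions by mollification, using that $r$ is smooth on $M\setminus(\{o\}\cup\mathrm{Cut}_o)$.

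I expect the chief difficulty to be the approximation in this last step: one must simultaneously manage the cutoffs at infinity and (in case (2)) at $o$, and show that all the cross terms arising from $F^{*p}\big(d(\eta_R\zeta_\epsilon v)\big)$ are controlled, which requires the convexity inequality \eqref{ineq} (or finiteness of $\lambda_F$ on compacta) to split $F^{*p}$ of a sum. The parameter restrictions --- $\beta>-n$ in case (1), and $p>\max\{1,-n-\beta\}$ with $\beta<-n$ in case (2) --- are precisely what keep each of these integrals finite and each cutoff error convergent to zero, while the curvature hypotheses enter only through the upper volume bound $\hat{\sigma}_o\leq e^{-\tau(y)}t^{n-1}$, which is indispensable for the integrability at large $t$.
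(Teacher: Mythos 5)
Your computation of \eqref{fakeinq4.4} and of the finiteness of the integrals is exactly the paper's argument: the pointwise identities $r^{\beta+p}F^{*p}(dv)=c(\delta)^p r^\beta|v|^p$ on $B_o(s)$ and $=c(\delta/2)^p r^\beta|v|^p$ outside (valid because reversibility gives $F^*(\pm dr)=1$), the volume bound $\hat{\sigma}_o(t,y)\leq e^{-\tau(y)}t^{n-1}$ from (\ref{Riccompar2.8}) and Lemma \ref{compinft}, and the exponent checks $\beta+pc(\delta)+n>0$, $pc(\delta/2)-(\beta+n)>0$ all coincide with (3.6)--(3.8). Where you diverge is the density step $v\in D^{1,p}(\Omega,r^{p+\beta})$, and there your argument has a gap in case (1).

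In case (1) you only truncate at infinity, and you then assert that $\eta_R v$ is a compactly supported \emph{Lipschitz} function whose only non-smoothness is the kink at $r=s$ and $\mathrm{Cut}_o$. This is false near $o$ whenever $c(\delta)=(n+\beta+\delta)/p<1$ (e.g.\ $n=3$, $\beta=-2.9$, $p=2$, $\delta=1/2$): there $v\sim r^{c(\delta)}$ is H\"older but not Lipschitz, so you cannot invoke the Lipschitz-approximation machinery (the analogue of Lemma \ref{lpschcom}), and a direct mollification is delicate because the weight $r^{\beta+p}$ may itself be singular at $o$ (one can have $\beta+p<0$ in case (1)). The paper avoids this entirely by using the \emph{vertical} truncation $v_\epsilon:=\max\{v-\epsilon,0\}$ rather than multiplicative cutoffs: since $v\to 0$ both at $o$ and at infinity, $v_\epsilon$ vanishes identically on $B_o(s\epsilon^{1/c(\delta)})$ and off $B_o(s\epsilon^{-1/c(\delta/2)})$, hence is genuinely globally Lipschitz with compact support in $\Omega$ in both cases, so Lemma \ref{lpschcom} applies; the error $\|v_\epsilon-v\|_D^p$ is then the sum of the two tails of the convergent integrals plus $\int\epsilon^p r^\beta d\mathfrak{m}$ over the intermediate annulus, each of which is shown to vanish as $\epsilon\to0^+$. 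Your case (2) argument can be made to work (the excision of $o$ restores Lipschitzness, and your cutoff-error estimates are the right ones), but to repair case (1) you should either also excise a shrinking neighbourhood of $o$ and check that $\int_{B_o(\epsilon')}\bigl(r^\beta|v|^p+r^{\beta+p}F^{*p}(dv)\bigr)d\mathfrak{m}\to0$ together with the cutoff-gradient cross term, or simply adopt the $\max\{v-\epsilon,0\}$ truncation, which handles both ends at once.
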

\begin{proof}
For any $\epsilon\in (0,1)$, set $v_\epsilon:=\max\{v-\epsilon,0 \}$. Lemma \ref{lpschcom} in Appendix \ref{Aapp} implies $v_\epsilon\in D^{1,p}(\Omega,r^{p+\beta})$.

Let $(t,y)$ be the polar coordinate system around $o$. The curvature assumption together with
 (\ref{Riccompar2.8}) and Lemma \ref{compinft} implies
\[
\hat{\sigma}_o(t,y)\leq e^{-\tau(y)}t^{n-1}, \text{ for any }y\in S_oM, \  0<t<i_y.\tag{3.6}\label{3.5volume}
\]

Firstly, we show that  $\|v\|_D$ is finite. In fact, since $pc(\delta)+\beta+n>0$ and $pc(\delta)-(\beta+n)>0$, (\ref{formulavolume}) together with (\ref{3.5volume}) furnishes
\begin{align*}
&\int_M r^\beta |v|^p d\mathfrak{m}
=\int_{B_o(s)}\left( \frac{r}{s} \right)^{pc(\delta)} r^\beta d\mathfrak{m}+\int_{M\backslash B_o(s)}\left(   \frac{r}{s}\right)^{-pc(\delta/2)}r^\beta d\mathfrak{m}\\
\leq &s^{\beta+n}\int_{S_oM}e^{-\tau(y)}d\nu_o(y)\left[ \frac{1}{pc(\delta)+\beta+n}+\frac{1}{pc(\delta/2)-(\beta+n)}       \right]<+\infty.\tag{3.7}\label{sharpness 4.4}
\end{align*}
On the other hand, a direct calculation together with (\ref{sharpness 4.4}) yields
\begin{align*}
&\int_M F^{*p}(dv)r^{p+\beta}d\mathfrak{m}=\int_{B_o(s)}F^{*p}(dv)r^{p+\beta}d\mathfrak{m}+\int_{M\backslash B_o(s)}F^{*p}(dv)r^{p+\beta}d\mathfrak{m}\\
=& c^p(\delta)\int_{B_o(s)}r^\beta |v|^p d\mathfrak{m}+c^p(\delta/2)\int_{M\backslash B_o(s)}r^\beta |v|^p d\mathfrak{m}<+\infty.\tag{3.8}\label{sharpness 4.5}
\end{align*}
Then the finiteness of $\|v\|_D$ follows. Moreover, since $c^p(\delta)>c^p(\delta/2)>0$, (\ref{fakeinq4.4}) follows  from (\ref{sharpness 4.5})  immediately.

Secondly, we prove $\|v_\epsilon-v\|_D\rightarrow$ as $\epsilon\rightarrow 0^+$. Choose a small $\epsilon\in (0,1)$ such that $\mathfrak{i}_o>s\epsilon^{1/c(\delta)}$.
Due to the positivity of  $r^{\beta+n-1}$,  (\ref{3.5volume}) yields
\begin{align*}
&\int_{B_o(s \epsilon^{-1/c(\delta/2)})\backslash B_o(s\epsilon^{1/c(\delta)})}\epsilon^pr^\beta d\mathfrak{m}
\leq\epsilon^p\int_{S_oM}e^{-\tau(y)}d\nu_o(y)\int^{\min\{i_y,s \epsilon^{-1/c(\delta/2)}\}}_{s\epsilon^{1/c(\delta)}}r^{\beta+n-1}dr\\
\leq&\frac{s^{\beta+n}}{\beta+n}\left[ \epsilon^{\frac{pc(\delta/2)-(\beta+n)}{c(\delta/2)}} -\epsilon^{\frac{pc(\delta)+\beta+n}{c(\delta)}} \right]\int_{S_oM}e^{-\tau(y)}d\nu_o(y)\rightarrow 0,\text{ as }\epsilon\rightarrow0^+,
\end{align*}
which together with $\|v\|_D<\infty$ implies
\begin{align*}
\|v_\epsilon-v\|_D^p
=&\int_{B_o(s\epsilon^{1/c(\delta)})}+\int_{  M\backslash B_o(s \epsilon^{-1/c(\delta/2)})}|v|^pr^\beta d\mathfrak{m}+\int_{B_o(s \epsilon^{-1/c(\delta/2)})\backslash B_o(s\epsilon^{1/c(\delta)})}\epsilon^pr^\beta d\mathfrak{m}\\
&+\int_{B_o(s\epsilon^{1/c(\delta)})}+\int_{  M\backslash B_o(s \epsilon^{-1/c(\delta/2)})}F^{*p}(dv)r^{p+\beta} d\mathfrak{m}\rightarrow0, \text{ as }\epsilon\rightarrow0^+.
\end{align*}
Now $v\in D^{1,p}(\Omega,r^{\beta+p})$ follows from $v_\epsilon\in D^{1,p}(\Omega,r^{p+\beta})$.
\end{proof}

\subsection{Finsler manifolds with non-positive flag curvature}
In this subsection, we study the Hardy inequalities on FMMMs with non-positive flag curvature. To begin with, we review the Laplacian comparison theorems concerned with the flag curvature.
\begin{lemma}[cf.\,\cite{Sh1,WX}]\label{flagcurLa}
Let $(M,o,F,d\mathfrak{m})$ be an $n$-dimensional forward complete PFMMM with $\mathbf{K}\leq 0$. Then the following inequalities hold a.e. on $M$:
\begin{align*}\left\{
\begin{array}{lll}
\Delta r_+\geq \frac{n-1}{r_+}, &\text{ if }\mathbf{S}_o^+\leq 0,\\
\tag{3.9}\label{6.1}\\
-\Delta(-r_-)\geq \frac{n-1}{r_-}, &\text{ if }\mathbf{S}_o^-\geq 0.
\end{array}
\right.
\end{align*}
\end{lemma}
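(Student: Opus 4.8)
The plan is to prove the forward inequality directly from the polar-coordinate expression of the Laplacian, and then to deduce the backward inequality by applying the identical argument to the reverse metric $\overleftarrow{F}$. For the forward case, I would fix $y\in S_oM$ and let $\gamma_y$ be the unit-speed minimal geodesic from $o$. Off the cut locus $\mathrm{Cut}_o$, which is closed and of null measure, $r_+$ is smooth and, for $0<t<i_y$, one has $\Delta r_+(\gamma_y(t))=\partial_t\log\hat{\sigma}_o(t,y)$. I would split the density into a measure part and a metric part by writing $\log\hat{\sigma}_o(t,y)=-\tau(\dot{\gamma}_y(t))+\log\mathbf{A}(t,y)$, where $\mathbf{A}(t,y):=\hat{\sigma}_o(t,y)\,e^{\tau(\dot{\gamma}_y(t))}$; by (\ref{v14-2.1}) one has $\mathbf{A}(t,y)\sim t^{n-1}$ as $t\to0^+$, so $\mathbf{A}$ is the genuine Riemannian volume ratio along $\gamma_y$ computed with the osculating tensor $g_{\dot{\gamma}_y}$. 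Since $\partial_t\tau(\dot{\gamma}_y(t))=\mathbf{S}(\dot{\gamma}_y(t))$ by the definition of the $S$-curvature, differentiating yields $\Delta r_+=\partial_t\log\mathbf{A}(t,y)-\mathbf{S}(\dot{\gamma}_y(t))$.

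Two estimates then close the forward case. First, $\mathbf{A}$ is built from Jacobi fields along $\gamma_y$ solving $J''+R_{\dot{\gamma}_y}J=0$, in which the flag curvature $\mathbf{K}(\dot{\gamma}_y,\cdot)$ plays the role of the sectional curvature; since $\mathbf{K}\leq0$, the Finsler Rauch/index comparison of Shen \cite{Sh1} and Wu and Xin \cite{WX} makes the shape operator of the forward geodesic spheres dominate the flat one, and taking traces gives $\partial_t\log\mathbf{A}(t,y)\geq (n-1)/t$, exactly as in nonpositively curved Riemannian geometry. Second, along a minimal geodesic from $o$ one has $\dot{\gamma}_y(t)=\nabla t$, so by Remark \ref{Scurvature} the hypothesis $\mathbf{S}^+_o\leq0$ means precisely $\mathbf{S}(\dot{\gamma}_y(t))\leq0$, whence $-\mathbf{S}(\dot{\gamma}_y(t))\geq0$. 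Combining the two estimates gives $\Delta r_+\geq (n-1)/r_+$ on $M\setminus(\mathrm{Cut}_o\cup\{o\})$, that is, a.e., which is the first line of (\ref{6.1}). I note that this argument uses only the existence of polar coordinates around $o$, and hence is valid whenever $(M,F)$ is forward \emph{or} backward complete.

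For the backward inequality, I would pass to the RFMMM $(M,\overleftarrow{F},d\mathfrak{m})$, which is backward complete because $(M,F)$ is forward complete; thus polar coordinates of $\overleftarrow{F}$ around $o$ exist and the forward argument applies verbatim to $\overleftarrow{F}$. By (\ref{revesequn}) one has $\overleftarrow{\mathbf{K}}(y,v)=\mathbf{K}(-y,v)\leq0$ and $\overleftarrow{r}_+=r_-$, while Remark \ref{Scurvature} identifies $\mathbf{S}^-_o\geq0$ with $\overleftarrow{\mathbf{S}}(\overleftarrow{\nabla}\mathfrak{t})\leq0$, i.e.\ with $\overleftarrow{\mathbf{S}}{}^+_o\leq0$ for $\overleftarrow{F}$. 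The forward inequality for $\overleftarrow{F}$ therefore reads $\overleftarrow{\Delta}\,\overleftarrow{r}_+\geq (n-1)/\overleftarrow{r}_+$; substituting $\overleftarrow{r}_+=r_-$ and $\overleftarrow{\Delta}f=-\Delta(-f)$ from (\ref{revesequn}) converts this into $-\Delta(-r_-)\geq (n-1)/r_-$ a.e., the second line of (\ref{6.1}).

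The hard part will be the metric estimate $\partial_t\log\mathbf{A}(t,y)\geq(n-1)/t$, which rests on the Finsler Rauch comparison and a matrix Riccati argument for the shape operator of forward geodesic spheres under $\mathbf{K}\leq0$; this, however, is exactly the content of \cite{Sh1,WX}. The remaining work is then routine bookkeeping: the distortion/$S$-curvature splitting of $\hat{\sigma}_o$, the sign translations furnished by Remark \ref{Scurvature}, the completeness transfer to $\overleftarrow{F}$, and the null measure of the cut locus underpinning the a.e.\ conclusion.
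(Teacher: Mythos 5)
Your proposal is correct and follows essentially the same route as the paper: the forward inequality is obtained from the standard Laplacian comparison under $\mathbf{K}\leq 0$ (the decomposition $\Delta r_+=\partial_t\log\mathbf{A}-\mathbf{S}(\dot{\gamma}_y)$ is exactly the "standard argument" the paper delegates to \cite{Sh1,WX}, and also appears as $\Delta t=H(t)-\mathbf{S}(t)$ in the proof of Lemma \ref{compinft}), combined with the sign of $\mathbf{S}^+_o$ via Remark \ref{Scurvature}. The backward inequality via the reverse metric $\overleftarrow{F}$, using (\ref{revesequn}) and the identities $\overleftarrow{r}_+=r_-$, $\overleftarrow{\Delta}\,\overleftarrow{r}_+=-\Delta(-r_-)$, is precisely the paper's argument.
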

\begin{proof}[Sketch of the proof] A standard argument (see Shen\cite{Sh1} or Wu and Xin \cite{WX}) furnishes $\Delta r_+\geq \frac{n-1}{r_+}$.
 In order to show $-\Delta(-r_-)\geq \frac{n-1}{r_-}$,   consider the   RFMMM $(M,\overleftarrow{F},d\mathfrak{m})$.  Thus, (\ref{revesequn}) together with Remark \ref{Scurvature} yields $\overleftarrow{\mathbf{K}}\leq0,\overleftarrow{\mathbf{S}}^+_o\leq 0$ and therefore, the same argument implies $\overleftarrow{\Delta}(\overleftarrow{r_+})\geq\frac{n-1}{\overleftarrow{r_+}}$. We conclude the proof by $\overleftarrow{\Delta}(\overleftarrow{r_+})=-\Delta(-r_-)$ and $\overleftarrow{r_+}=r_-$.
\end{proof}

\begin{theorem}\label{nonpositflag22}Let $(M,o,F,d\mathfrak{m})$ be an $n$-dimensional forward complete PFMMM with $\mathbf{K}\leq 0$ and $\mathbf{S}_o^-\geq 0$.  Let $\Omega$ be a natural domain with $o\in \Omega$. Given $p,\beta \in \mathbb{R}$ with $p\in (1,n)$ and $\beta>-n$,
 we have
\begin{align*}
\int_\Omega r_-^{\beta+p}\max\{F^{*p}(\pm du)\}d \mathfrak{m}\geq \left(\frac{n+\beta}{p}\right)^p\int_\Omega r_-^\beta |u|^p d\mathfrak{m},\ \forall\,u\in C^\infty_0(\Omega),\tag{3.10}\label{5.1.2againagain}
\end{align*}
 In particular, the constant $ \left( \frac{n+\beta}{p} \right)^p$  is sharp if $\lambda_F(\Omega)=1$.
\end{theorem}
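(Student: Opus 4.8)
The plan is to derive the inequality from Lemma \ref{mainlemmforcr}(ii) by choosing the exponent $\alpha$ so that the weighted $p$-Laplacian of the model radial function is governed exactly by the flag-curvature comparison of Lemma \ref{flagcurLa}, and then to establish optimality by a concentration argument at $o$, where the density is asymptotically Euclidean by (\ref{v14-2.1}).

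For the inequality I would set $\alpha:=\frac{n-p}{p-1}$, which is positive precisely because $1<p<n$ and is tailored so that $(\alpha+1)(p-1)=n-1$. Using $\nabla(r_-^{-\alpha})=\alpha r_-^{-\alpha-1}\nabla(-r_-)$ together with $F(\nabla(-r_-))=1$, a direct computation gives a.e.
\[
\Delta_p(r_-^{-\alpha})=\alpha^{p-1}r_-^{-(\alpha+1)(p-1)}\left[\Delta(-r_-)+(\alpha+1)(p-1)\,r_-^{-1}\right].
\]
Inserting the comparison $-\Delta(-r_-)\ge \frac{n-1}{r_-}$ from Lemma \ref{flagcurLa} (valid since $\mathbf{K}\le 0$ and $\mathbf{S}_o^-\ge 0$) and using $(\alpha+1)(p-1)=n-1$ makes the bracket vanish in the limit, so $\Delta_p(r_-^{-\alpha})\le 0$ a.e. Since $c=\alpha[(\alpha+1)(p-1)+\beta+1]=\alpha(n+\beta)>0$, this is $-c\,\Delta_p(r_-^{-\alpha})\ge 0$. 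The integrability hypotheses (1') reduce to $r_-^{1-n},r_-^{\beta},r_-^{\beta+p}\in L^1_{\lo}(\Omega)$, all of which hold near $o$ by Lemma \ref{centerinteg}(1) because $n-1<n$, $\beta>-n$, and $\beta+p>-n$. Then Lemma \ref{mainlemmforcr}(ii) delivers the inequality with $\vartheta_{-\alpha,\beta,p}=\frac{|(\alpha+1)(p-1)+\beta+1|}{p}=\frac{n+\beta}{p}$, which is the asserted constant.

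The delicate point here, and the main obstacle, is that Lemma \ref{mainlemmforcr}(ii) requires $-c\,\Delta_p(r_-^{-\alpha})\ge 0$ in the \emph{weak} sense, whereas Lemma \ref{flagcurLa} only supplies the pointwise a.e. comparison. Upgrading to the distributional inequality is nontrivial because $r_-$ is merely Lipschitz across the cut locus $\mathrm{Cut}_o$, so integrating against a test function $u\ge 0$ produces a singular contribution carried by $\mathrm{Cut}_o$. The resolution is a Calabi-type argument: $-r_-$ is superharmonic-like, so the singular part of its distributional Laplacian on $\mathrm{Cut}_o$ is nonpositive and therefore only reinforces $\Delta_p(r_-^{-\alpha})\le 0$; since $\mathrm{Cut}_o$ is closed of null measure, the weak inequality follows.

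For sharpness I would use the hypothesis $\lambda_F(\Omega)=1$, so that $F$ is reversible on $\Omega$, $r_-=r$, and $F^*(du)=F^*(-du)$; thus $\max\{F^{*p}(\pm du)\}=F^{*p}(du)$, and a radial function $u=\phi(r)$ satisfies $F^{*p}(du)=|\phi'(r)|^p$. The constant is a local invariant at $o$: by (\ref{v14-2.1}) one has $\hat\sigma_o(t,y)=(1+o(1))e^{-\tau(y)}t^{n-1}$ as $t\to 0^+$ uniformly in $y\in S_oM$, so for $\phi$ supported in a small ball $B_o(\epsilon)$ the Rayleigh quotient reduces, up to a factor $(1+o(1))$ and the cancelling constant $\int_{S_oM}e^{-\tau(y)}d\nu_o(y)$, to the one-dimensional quotient $\big(\int_0^\epsilon|\phi'|^p t^{\beta+p+n-1}dt\big)\big/\big(\int_0^\epsilon|\phi|^p t^{\beta+n-1}dt\big)$, whose sharp constant is $\big(\frac{n+\beta}{p}\big)^p$. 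To realize it I would start from the formal extremal $t^{(n+\beta)/p}$, which vanishes at $o$ because $n+\beta>0$, regularize it with a logarithmic cutoff $\phi_R(t)=t^{(n+\beta)/p}\big(1-\frac{\log t}{\log R}\big)_{+}$, and then set $u_{R,s}(x)=\phi_R(r(x)/s)$, letting $R\to\infty$ and $s\to 0$ with $sR\to 0$. The logarithmic cutoff is exactly what renders the boundary error negligible relative to the diverging main term, while $sR\to 0$ confines the support to the region where (\ref{v14-2.1}) forces the measure to be Euclidean; after mollifying the corners and modifying $\phi_R$ near $0$ to make $u_{R,s}$ smooth (which alters only negligible lower-order terms), the quotient tends to $\big(\frac{n+\beta}{p}\big)^p$. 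The subtle point in this step is that neither a single-scale truncation nor a global test function works — the former produces a cutoff error of the same order as the main term, and the latter is obstructed since a Cartan--Hadamard measure may grow too fast at infinity for the decaying tail to be integrable — which is precisely why the combined logarithmic-cutoff-plus-concentration scheme is required.
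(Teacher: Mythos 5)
Your derivation of the inequality itself coincides with the paper's: the same $\alpha=\frac{n-p}{p-1}$, the same computation of $\Delta_p(r_-^{-\alpha})$, the comparison $-\Delta(-r_-)\ge\frac{n-1}{r_-}$ from Lemma \ref{flagcurLa}, and the reduction to Lemma \ref{mainlemmforcr}(ii) with integrability supplied by Lemma \ref{centerinteg}. You are also right that the lemma needs $-c\,\Delta_p(r_-^{-\alpha})\ge 0$ in the \emph{weak} sense while the comparison is only pointwise off $\mathrm{Cut}_o$ (a point the paper passes over in silence), but your Calabi-type fix has the sign backwards. Near a cut point $r_-$ is a \emph{minimum} of smooth branch functions and so has concave kinks; it is $r_-$, not $-r_-$, whose distributional Laplacian carries a nonpositive singular part. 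The function $-r_-$ is a maximum of smooth branches, $\Delta(-r_-)$ has a \emph{nonnegative} singular measure on $\mathrm{Cut}_o$, and since $\Delta_p(r_-^{-\alpha})=\di\bigl(\alpha^{p-1}r_-^{1-n}\nabla(-r_-)\bigr)$ with a positive weight, this singular part pushes $\Delta_p(r_-^{-\alpha})$ \emph{up}, against the inequality you need. Nor is this a removable technicality: on the flat cylinder $S^1\times\mathbb{R}^{n-1}$, which satisfies every hypothesis of the statement with $\Omega=M$, test functions depending only on the $\mathbb{R}^{n-1}$-variable and supported far from $o$ (where $r\sim|z|$) drive the quotient for $\beta=-p$ down to $\bigl(\frac{n-1-p}{p}\bigr)^p<\bigl(\frac{n-p}{p}\bigr)^p$. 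The argument is genuinely safe only when $\mathrm{Cut}_o=\emptyset$, which is the Cartan--Hadamard situation of Theorem \ref{nonpositflag2} that the paper ultimately cares about.

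The sharpness construction also fails as written. Both powers $t^{\pm\vartheta}$, $\vartheta=\frac{n+\beta}{p}$, satisfy the pointwise identity $|\phi'|^pt^{\beta+p+n-1}=\vartheta^p|\phi|^pt^{\beta+n-1}$, but only the \emph{negative} power turns the integrand into $t^{-1}$, whose logarithmic divergence at $t=0$ (Lemma \ref{centerinteg}(2)) is what makes a single truncation work: the paper takes $v=\phi\cdot\max\{\epsilon,r\}^{-\vartheta}$ with a \emph{fixed} outer cutoff $\phi$, so the main term $\int_{B_o(R/2)\setminus B_o(\epsilon)}r^{-n}\,d\mathfrak{m}\to\infty$ while the cutoff contribution stays bounded. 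Your choice $t^{+\vartheta}$ gives the integrand $t^{2(n+\beta)-1}$, integrable at $0$ and concentrating all mass at the outer edge of the support, exactly where your cutoff acts; the logarithmic cutoff does not repair this. Concretely, substituting $s=1-\log t/\log R$ and rescaling, the quotient of $\phi_R(t)=t^{\vartheta}(1-\log t/\log R)_+$ converges as $R\to\infty$ to $\int_0^\infty|\vartheta\sigma-1|^pe^{-2(n+\beta)\sigma}d\sigma\big/\int_0^\infty\sigma^pe^{-2(n+\beta)\sigma}d\sigma$, which for $p=2$ and $n+\beta=1$ equals $\frac54$ rather than $\vartheta^p=\frac14$. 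Replacing $t^{+\vartheta}$ by $t^{-\vartheta}$ truncated at $t=\epsilon$ removes the need for the two-parameter scheme entirely and recovers the paper's argument.
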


\begin{proof}Let $\alpha=(n-p)/(p-1)$ and  $c=\alpha[(\alpha+1)(p-1)+\beta+1]>0$. A direct calculation together with (\ref{6.1}) yields
\begin{align*}
-c\Delta_p(r^{-\alpha}_-)=c\alpha^{p-1} r_-^{-(\alpha+1)(p-1)-1}\left[ -(\alpha+1)(p-1)+r_-(-\Delta(-r_-)) \right]\geq 0.
\end{align*}
And Lemma \ref{centerinteg} yields  $r_-^{(-\alpha-1)(p-1)},r_-^\beta,r_-^{\beta+p}\in L^1_{\lo}(\Omega)$. Thus, Lemma \ref{mainlemmforcr} (ii) furnishes (\ref{5.1.2againagain}) immediately.

In the sequel, we show that the constant $\vartheta^p:=\left( ({n+\beta})/{p} \right)^p$ is sharp if $F|_{\Omega}$ is reversible. Set
\[
\mathfrak{C}_{\beta,p}(\Omega):=\inf_{u\in C^\infty_0(\Omega)\backslash\{0\}}\frac{\int_{\Omega} r^{\beta+p}F^{*p}( du)d \mathfrak{m}}{\int_{\Omega} r^\beta|u|^p d\mathfrak{m}}.
\]
Thus, (\ref{5.1.2againagain}) furnishes $\mathfrak{C}_{\beta,p}(\Omega)\geq \vartheta^p$. On the other hand, choose $R\in (0,\mathfrak{i}_o)$ such that ${B_o(R)}\subset\subset \Omega$.
For any $\epsilon\in (0,R/4)$,
define $u_\epsilon(x):=\max\{\epsilon,r(x) \}^{-\vartheta}$
and choose a cut-off function $\phi\in C^\infty_0(\Omega)$ such that
\begin{align*}\phi=\left\{
\begin{array}{lll}
1, && x\in B_o(R/2),\\
\\
0, && x\notin B_o(R).
\end{array}
\right.
\end{align*}
Set $v:=\phi u_\epsilon$. Lemma \ref{lpschcom} implies $v\in D^{1,p}(\Omega,r^{\beta+p})$.
And a direct calculation yields
\begin{align*}
\int_{\Omega} r^{{\beta+p}}F^{*p}( dv) d\mathfrak{m}=\vartheta^p\int_{B_o(R/2)\setminus B_o(\epsilon)}r^{-n}d\mathfrak{m}+\int_{B_o(R)\setminus  B_o(R/2)} r^{{\beta+p}}F^{*p}( d(\phi r^{-\vartheta}) )d\mathfrak{m}.
\end{align*}
On the other hand, Lemma \ref{centerinteg} implies
\begin{align*}
\int_{\Omega} r^{\beta}|v|^p d\mathfrak{m}\geq \int_{B_o(R/2)\setminus B_o(\epsilon)}r^{\beta}|v|^p d\mathfrak{m}=\int_{B_o(R/2)\setminus B_o(\epsilon)}r^{-n}d\mathfrak{m}\rightarrow +\infty,\ \text{ as }\epsilon\rightarrow0^+.
\end{align*}
Therefore, the above inequalities furnish
\begin{align*}
\mathfrak{C}_{\beta,p}(\Omega)\leq&\frac{\int_{\Omega} r^{{\beta+p}}F^{*p}( dv)d\mathfrak{m}}{\int_{\Omega} r^{{\beta}}|v|^p d\mathfrak{m}}\rightarrow \vartheta^p, \text{ as }\epsilon\rightarrow0^+,
\end{align*}
which concludes the proof.
\end{proof}

\begin{remark}\label{strongconditH}
Let $(M,F,d\mathfrak{m})$ be an $n$-dimensional forward complete FMMM with $\mathbf{K}\leq 0$ and $\mathbf{S}\geq 0$ and let $\Omega$ be a natural domain. The same argument as above shows that (\ref{5.1.2againagain}) remains valid even if $o\notin \Omega$.
\end{remark}

\begin{proof}[Proof of Theorem \ref{nonpositflag2}] Theorem \ref{nonpositflag2} is a direct consequence of Theorem \ref{nonpositflag22}.
\end{proof}

Recently, the $L^p$-Hardy inequalities on a reversible Minkowski space endowed with the Lebesgue measure have been investigated in Mercaldo, Sano and Takahshi \cite{MST} by different methods.
The following example follows from \cite[Theorem 1.1, (2.2), Theorem 6.4]{MST}, which can also be  deduced from Theorem \ref{nonpositflag22} and Remark \ref{strongconditH}.
\begin{example}
Let $(\mathbb{R}^n, F, dx)$ be an $n$-dimensional reversible Minkowski space endowed with the Lebesgue measure. Let $\Omega$ be a  domain in $\mathbb{R}^n$ and let $r(x):=d_F(\mathbf{0},x)=F(x)$. For $1<p<n$, one has
\[
\left(\frac{n-p}{p}\right)^p\int_\Omega \frac{|u(x)|^p}{r^p(x)}dx\leq \int_\Omega {F^p(\nabla u)}dx, \ \forall\,u\in C^\infty_0(\Omega).
\]
Moreover, if $\mathbf{0}\in \Omega$, then $\left(\frac{n-p}{p}\right)^p$ is sharp (but not attained).
\end{example}

\begin{theorem}\label{frirstcurHard2}
Let $(M,o,F,d\mathfrak{m})$ be an $n$-dimensional forward complete PFMMM with $\mathbf{K}\leq 0$ and $\mathbf{S}_o^+\leq 0$.  Given any $p,\beta\in \mathbb{R}$ with $p>n>-\beta$,
we have
\[
\int_{M}r^{\beta+p}_+\max\{ F^{*p}(\pm du) \}d\mathfrak{m}\geq \left( \frac{n+\beta}{p} \right)^p\int_{M}r_+^{\beta} {|u|^p}d\mathfrak{m},\ \forall\,u\in C^\infty_0(M).\tag{3.11}\label{Hard5.3}
\]
In particular, the constant $ \left( \frac{n+\beta}{p} \right)^p$  is sharp if  $\lambda_F=1$, $\mathfrak{i}_o=+\infty$, $\mathbf{K}=0$ and  $\mathbf{S}_o^+=0$.
\end{theorem}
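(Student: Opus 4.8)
The plan is to run the counterpart of the proof of Theorem \ref{nonpositflag22}, but now with the forward distance $r_+$ and in the regime $p>n$, so that Lemma \ref{mainlemmforcr}(i) applies with a \emph{positive} exponent. First I would set $\alpha:=\frac{p-n}{p-1}$, which is positive precisely because $p>n$, and record that $(\alpha-1)(p-1)=-(n-1)$, whence $(\alpha-1)(p-1)-\beta-1=-(n+\beta)$ and
\[
c:=\alpha\big[(\alpha-1)(p-1)-\beta-1\big]=-\alpha(n+\beta)<0,
\]
the sign coming from $\alpha>0$ together with $n+\beta>0$ (this is the content of $n>-\beta$). Correspondingly $\vartheta_{\alpha,\beta,p}=|(\alpha-1)(p-1)-\beta-1|/p=(n+\beta)/p$, which is exactly the constant asserted in (\ref{Hard5.3}).

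The analytic core of the lower bound is the weak differential inequality $-c\,\Delta_p(r_+^\alpha)\geq 0$. Since $\mathbf{K}\leq 0$ and $\mathbf{S}_o^+\leq 0$, Lemma \ref{flagcurLa} supplies the comparison $\Delta r_+\geq \frac{n-1}{r_+}$ a.e. A direct computation, using $F(\nabla r_+)=1$ and $\langle\nabla r_+,dr_+\rangle=1$, gives
\[
-c\,\Delta_p(r_+^\alpha)=-c\,\alpha^{p-1}r_+^{-n}\big[\,r_+\Delta r_+-(n-1)\,\big],
\]
where the bracket is nonnegative by the comparison and $-c>0$; the favorable sign of the cut-locus contribution makes this valid in the weak sense, exactly as in Theorem \ref{nonpositflag22}. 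The integrability conditions of Lemma \ref{mainlemmforcr}(i) are then checked against Lemma \ref{centerinteg}(1): the three exponents $(\alpha-1)(p-1)=-(n-1)$, $\beta$, and $\beta+p$ all exceed $-n$ (the latter two because $\beta>-n$ and $\beta+p>\beta+n>0$), so $r_+^{-(n-1)},r_+^{\beta},r_+^{\beta+p}\in L^1_{\lo}(M)$. Taking $\Omega=M$ (necessarily noncompact, so that it is a natural domain) and invoking Lemma \ref{mainlemmforcr}(i) yields (\ref{Hard5.3}).

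For sharpness I would invoke Lemma \ref{impsharplem}, whose hypotheses are exactly what the extra conditions provide: $\lambda_F=1$ makes the space reversible, $\mathfrak{i}_o=+\infty$ forces $M$ to be noncompact, $\mathbf{S}_o^+=0$ gives $\mathbf{S}_o^+\geq 0$, and $\mathbf{K}=0$ gives $\mathbf{Ric}=0\geq 0$; under these the volume comparison (\ref{Riccompar2.8}) specializes to the sharp estimate $\hat{\sigma}_o(t,y)\leq e^{-\tau(y)}t^{n-1}$ on which Lemma \ref{impsharplem} rests. Since $\beta>-n$, $p>1$ and $\Omega=M$ is noncompact, case (1) of that lemma produces, for each $\delta\in(0,1)$, a function $v\in D^{1,p}(M,r^{p+\beta})$ with
\[
\frac{\int_M r^{\beta+p}F^{*p}(dv)\,d\mathfrak{m}}{\int_M r^{\beta}|v|^p\,d\mathfrak{m}}<c^p(\delta)=\left(\frac{n+\beta}{p}+\frac{\delta}{p}\right)^p
\]
by (\ref{fakeinq4.4}). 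As $D^{1,p}(M,r^{p+\beta})$ is the closure of $C^\infty_0(M)$, letting $\delta\to 0^+$ shows the best constant is at most $\big(\frac{n+\beta}{p}\big)^p$, which together with (\ref{Hard5.3}) forces equality. (Alternatively one could reuse the singular minimizing sequence concentrated near $o$ from Theorem \ref{nonpositflag22}; I prefer the route through Lemma \ref{impsharplem} because the flatness hypotheses make its volume estimate exact.)

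The main obstacle is the sharpness half. The lower bound is a routine specialization of the machinery already in place once the correct $\alpha=\frac{p-n}{p-1}$ is identified and the sign $c<0$ is tracked through Lemma \ref{mainlemmforcr}(i). Optimality, however, requires exhibiting an almost-extremal in $D^{1,p}(M,r^{p+\beta})$, and here the regime $p>n$ is genuinely delicate: the borderline profile $r^{-(n+\beta)/p}$ is only marginally non-integrable at both ends, so one must balance its behavior near $o$ against its behavior at infinity and verify membership in $D^{1,p}$. This balancing is precisely what the flatness conditions $\mathbf{K}=0$, $\mathbf{S}_o^+=0$ and $\mathfrak{i}_o=+\infty$ buy, through the equality case of the volume comparison underlying Lemma \ref{impsharplem}.
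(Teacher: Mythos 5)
Your proposal is correct and follows essentially the same route as the paper: the same choice $\alpha=(p-n)/(p-1)$ with $c<0$, the same Laplacian comparison $\Delta r_+\geq (n-1)/r_+$ from Lemma \ref{flagcurLa}, the same integrability check via Lemma \ref{centerinteg}, an application of Lemma \ref{mainlemmforcr}(i), and sharpness via the quasi-extremal $v$ of Lemma \ref{impsharplem} (noting $\mathbf{Ric}=0$ and noncompactness) with $\delta\to 0^+$. The explicit simplifications $(\alpha-1)(p-1)=-(n-1)$ and $c=-\alpha(n+\beta)$ match the paper's computation exactly.
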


\begin{proof}
 Let $\alpha:=(p-n)/(p-1)$ and $c:=\alpha[(\alpha-1)(p-1)-\beta-1]<0$. A direct calculation together with  (\ref{6.1}) yields
\begin{align*}
-c\Delta_p(r_+^\alpha)=-c\alpha^{p-1}r_+^{(\alpha-1)(p-1)-1}\left[  (\alpha-1)(p-1)+r_+\Delta r_+ \right]\geq 0.
\end{align*}
And Lemma \ref{centerinteg} implies $r_+^{(\alpha-1)(p-1)},r^{\beta},r^{\beta+p}\in L^1_{\lo}({M})$. Then
 (\ref{Hard5.3}) follows from  Lemma \ref{mainlemmforcr} (i) directly.

It remains to show that $\vartheta^p:=\left( ({n+\beta})/{p} \right)^p$ is sharp if
$\lambda_F=1$, $\mathfrak{i}_o=+\infty$, $\mathbf{K}=0$ and $\mathbf{S}_o^+=0$. Note that $M$ is noncompact in this case.
Now set
\[
\mathfrak{C}_{\beta,p}({M}):=\inf_{u\in C^\infty_0({M})\backslash\{0\}}\frac{\int_{M} r^{\beta+p}F^{*p}( du)d \mathfrak{m}}{\int_{M} r^\beta|u|^p d\mathfrak{m}}.
\]
Thus, (\ref{Hard5.3}) implies $\mathfrak{C}_{\beta,p}({M})\geq \vartheta^p$. On the other hand, due to $\mathbf{Ric}=0$, Lemma \ref{impsharplem} furnishes
\[
\mathfrak{C}_{\beta,p}({M})\leq \frac{\int_{M} r^{\beta+p}F^{*p}( dv)d \mathfrak{m}}{\int_{M} r^\beta|v|^p d\mathfrak{m}}<c^p(\delta)\rightarrow \vartheta^p,\text{ as }\delta\rightarrow 0^+,
\]
where $v$ is defined as in Lemma \ref{impsharplem}. This concludes the proof.
\end{proof}

\begin{proof}[Proof of Theorem \ref{frirstcurHard}] Since the injectivity radius of every point in a Cartan-Hadamard manifold is infinite (cf. Bao et al. \cite{BCS}), Theorem \ref{frirstcurHard} follows from Theorem \ref{frirstcurHard2} immediately.
\end{proof}

A reversible Minkowski space is a linear space equipped with a reversible Minkowski norm.
According to Bao et al. \cite{BCS} and Shen \cite{Shen2013,Sh1},
every reversible Minkowski space endowed with the Lebesgue measure is a Cartan-Hadamard measure manifold with $\lambda_F=1$, $\mathbf{K}=0$ and  $\mathbf{S}=0$.
Hence, the inequality (\ref{Hard5.3}) is optimal on such spaces.
There are various reversible Minkowski norms on $\mathbb{R}^n$. Now we recall two types of them.
\begin{example}\label{ConstrucMin}
1. $(\alpha,\beta)$-metrics on $\mathbb{R}^n$. See Chern and Shen \cite{CHZ}. Let $\phi(s)$ be  an arbitrary $C^\infty$ even function on some symmetric
open interval $I = (-b_0, b_0)$ with
\[
\phi(s)>0,\ (\phi(s)-s\phi'(s))+(b^2-s^2)\phi''(s)>0,
\]
where $s$ and $b$ are arbitrary numbers with $|s| < b < b_0$. Then for any constant $1$-form $\beta(y)=b_iy^i$ on $\mathbb{R}^n$ with $\|\beta\|<b_0$, the following metric
\[
F(y):=\|y\|\, \phi\left(  \frac{\beta(y)}{\|y\|} \right), \ \forall \, y\in T\mathbb{R}^n,
\]
is a reversible Minkowski norm on $\mathbb{R}^n$, where $\|\cdot\|$ denotes the  Euclidean norm.

\smallskip

2. Fourth Root metrics on $\mathbb{R}^n$. See Li and Shen \cite{LS}. Let $A(y):=a_{ijkl}y^iy^jy^ky^l$. Suppose  $(2AA_{ij}-A_iA_j)$  is positive definite, where $A_i:=\frac{\partial A}{\partial y^i}$ and $A_{ij}:=\frac{\partial^2 A}{\partial y^i\partial y^j}$. Then $F(y):=A^{\frac14}$ is a reversible Minkowski norm on $\mathbb{R}^n$.
\end{example}

Given a forward complete Finsler manifold $(M,F)$,  the topology induced by  backward balls is the same as the original one (cf. Bao et al. \cite[p.155]{BCS}).
Denote by $\overleftarrow{\mathfrak{i}_o}$ (resp., $\mathfrak{i}_o$ ) the injectivity radius of $o$ with respect to $(M,\overleftarrow{F})$ (resp., $(M,F)$).  A standard argument shows
both $\overleftarrow{\mathfrak{i}_o},\mathfrak{i}_o$ are positive (see Bao et al. \cite[Theorem 6.3.1]{BCS} and Yuan et al. \cite[Proposition 3.2]{YZS} for example). However, $\overleftarrow{\mathfrak{i}_o}$ may not coincide with $\mathfrak{i}_o$ if $\lambda_F\neq1$.
For instance, for the   Funk manifold in Example \ref{Funkexam}, $\mathfrak{i}_{\mathbf{0}}=+\infty$ while $\overleftarrow{\mathfrak{i}_{\mathbf{0}}}=\log 2$. Now we have the following result.

\begin{theorem}\label{logone2}
Let $(M,o,F,d\mathfrak{m})$ be an $n$-dimensional forward complete PFMMM with $\mathbf{K}\leq0$ and $\mathbf{S}_o^-\geq0$.
For any $R,p,\beta\in \mathbb{R}$ with $R\in (0,\overleftarrow{\mathfrak{i}_o}),\,
p\in (1,n]$ and $\beta<-1,$
 we have
\begin{align*}
\int_{B_o^-(R)}\left[\log\left(\frac{R}{r_-} \right) \right]^{p+\beta}\max\{ F^{*p}(\pm du) \}d\mathfrak{m}\geq \left(\frac{|\beta+1|}{p}\right)^p\int_{B_o^-(R)}\left[\log\left(\frac{R}{r_-} \right) \right]^\beta \frac{|u|^p}{r_-^p}d\mathfrak{m},\tag{3.12}\label{5.8lognew}
\end{align*}
for any $u\in C^\infty_0(B_o^-(R))$.
In particular, the constant $\left(\frac{|\beta+1|}{p}\right)^p$ is sharp if $\lambda_F({B_o^-(R)})=1$.
\end{theorem}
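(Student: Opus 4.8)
The plan is to derive the inequality from Theorem~\ref{divlemf}(1) by exhibiting a single vector field adapted to the logarithmic weight, and then to prove optimality by a concentration argument near the boundary sphere $\{r_-=R\}$, where the weight $[\log(R/r_-)]^{\beta}r_-^{-p}$ is singular. Throughout I would use the hypothesis $R<\overleftarrow{\mathfrak i_o}$, which guarantees that $B_o^-(R)$ meets no cut locus, so that $r_-$ is smooth on $B_o^-(R)\setminus\{o\}$ and the reverse polar coordinates $(\mathfrak t,\mathfrak y)$ with $\mathfrak t=r_-$ cover the punctured ball with a smooth positive density. The curvature inputs $\mathbf K\le0$ and $\mathbf S_o^-\ge0$ enter only through the Laplacian comparison $-\Delta(-r_-)\ge (n-1)/r_-$ of Lemma~\ref{flagcurLa}.

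For the inequality itself, I would set
\[
X:=-\left[\log\tfrac{R}{r_-}\right]^{\beta+1}r_-^{1-p}\,\nabla(-r_-),\qquad f_X:=|\beta+1|\left[\log\tfrac{R}{r_-}\right]^{\beta}r_-^{-p}.
\]
Writing $X=g(r_-)\nabla(-r_-)$ and using $dr_-(\nabla(-r_-))=-1$ together with $\di(\nabla(-r_-))=\Delta(-r_-)$, a direct differentiation gives $\di X=-g'(r_-)+g(r_-)\Delta(-r_-)$; feeding in $-\Delta(-r_-)\ge (n-1)/r_-$ and collecting terms yields
\[
\di X\ \ge\ |\beta+1|\left[\log\tfrac{R}{r_-}\right]^{\beta}r_-^{-p}+(n-p)\left[\log\tfrac{R}{r_-}\right]^{\beta+1}r_-^{-p}\ \ge\ f_X,
\]
where the last step uses $p\le n$ and $\beta<-1$ (so that $-(\beta+1)=|\beta+1|$ and the surplus term is nonnegative). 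The pointwise estimate is promoted to the weak inequality $f_X\le\di X$ by the same $r_\epsilon=\epsilon+r_-$ regularization used in Lemma~\ref{mainlemmforcr}, the only singularity to absorb being at $o$. The integrability hypotheses of Theorem~\ref{divlemf} follow from the near-$o$ behaviour $r_-^{n-p}[\log]^{\beta+1}$ for $F(X)$, $r_-^{n-1-p}[\log]^{\beta}$ for $f_X$, and $r_-^{n-1}[\log]^{p+\beta}$ for $F^p(X)/f_X^{p-1}$ (all times the density); the only borderline case is $f_X$ when $p=n$, where $n-1-p=-1$ and integrability holds precisely because $\beta<-1$ (cf.\ Lemma~\ref{centerinteg}). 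Since $F(-X)=[\log(R/r_-)]^{\beta+1}r_-^{1-p}$, one computes $F^p(-X)/f_X^{p-1}=|\beta+1|^{1-p}[\log(R/r_-)]^{p+\beta}$, and Theorem~\ref{divlemf}(1) then yields exactly \eqref{5.8lognew} with constant $(|\beta+1|/p)^p$.

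For sharpness, assume $\lambda_F(B_o^-(R))=1$, so that $\max\{F^{*p}(\pm du)\}=F^{*p}(du)$ and, by reversibility, $F^*(dr_-)=1$ a.e. The decisive observation is that for the model function $w_s:=[\log(R/r_-)]^{s}$ one has $F^{*p}(dw_s)=|s|^p[\log(R/r_-)]^{(s-1)p}r_-^{-p}$, so the two integrands of the Rayleigh quotient coincide up to the factor $|s|^p$; hence the quotient equals $|s|^p$ identically, for every admissible region of integration. Taking $s=|\beta+1|/p$ makes the exponent $sp+\beta=-1$, so the denominator integrand becomes $[\log(R/r_-)]^{-1}r_-^{-p}$, whose integral diverges logarithmically as the region approaches $\{r_-=R\}$ (there the reverse density is smooth and bounded below, using $R<\overleftarrow{\mathfrak i_o}$). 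I would therefore take compactly supported truncations $u_\epsilon\in C_0^\infty(B_o^-(R))$ equal to $w_s$ on $\{\epsilon\le r_-\le R(1-\epsilon)\}$, cut off smoothly near $o$ and near $\{r_-=R\}$: the bulk contributes the exact ratio $|s|^p$, while the fixed inner cutoff and the outer transition layer contribute only lower-order amounts against the diverging denominator, so that the quotient of $u_\epsilon$ tends to $(|\beta+1|/p)^p$.

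The main obstacle is the sharpness step: one must choose the outer cutoff near $\{r_-=R\}$ so that the extra gradient energy created in the transition layer is genuinely negligible compared with the logarithmically divergent denominator, which forces a careful, $\epsilon$-dependent choice of the transition width. By contrast, the weak divergence inequality is routine here, since the absence of a cut locus inside $B_o^-(R)$ reduces it to absorbing the single singularity at $o$ exactly as in Lemma~\ref{mainlemmforcr}.
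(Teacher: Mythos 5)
Your derivation of the inequality is essentially the paper's own: with $\rho:=\log(R/r_-)$ one has $\nabla\rho=r_-^{-1}\nabla(-r_-)$ and $F(\nabla\rho)=1/r_-$, so your pair $(X,f_X)$ coincides exactly with the paper's choice $X=-\rho^{\beta+1}F^{p-2}(\nabla\rho)\nabla\rho$, $f_X=|\beta+1|\rho^{\beta}F^{p}(\nabla\rho)$; your integrability checks reproduce Step~1 (including the borderline case $p=n$, rescued precisely by $\beta<-1$), your surplus term $(n-p)\rho^{\beta+1}r_-^{-p}\ge0$ is the paper's estimate $-\Delta_p\rho\ge(n-p)r_-^{-p}$ in disguise, and the passage from the pointwise to the weak inequality differs only in packaging: the paper tests against $v=\rho^{-c}u$ and applies the divergence theorem on $B_o^-(R)\setminus\overline{B_o^-(\delta)}$, the boundary term at $o$ dying like $\delta^{n-p}[\log(R/\delta)]^{\beta+1}$ --- the same decay your proposed regularization would rely on. The genuine divergence is in the sharpness step. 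The paper builds a single fixed test function with perturbed exponents, $(\rho/s)^{-c(\delta/2)}$ on $B_o(R/2)$ and $(\rho/s)^{c(\delta)}$ outside, verifies via the volume comparison (3.21) that it lies in $D^{1,p}(B_o(R),\rho^{\beta+p})$, and lets $\delta\to0^+$; you instead take the exact critical power $[\log(R/r_-)]^{|\beta+1|/p}$, observe that numerator and denominator integrands become pointwise proportional with common density $[\log(R/r_-)]^{-1}r_-^{-p}$, and truncate near $\{r_-=R\}$ where this diverges logarithmically. Your stated worry about choosing the transition width is actually unfounded: with an outer cutoff of proportional width $\sim\epsilon R$ the extra gradient energy is of order $w^p\,|d\chi|^p\,\rho^{p+\beta}\sim\epsilon^{p-1}\cdot\epsilon^{-p}$ over a layer of measure $\sim\epsilon$, hence $O(1)$ against a denominator of size $\log(1/\epsilon)$, and the fixed inner cutoff near $o$ likewise contributes only $O(1)$ (using $R<\overleftarrow{\mathfrak{i}_o}$ so that the reverse density is smooth and bounded on the closed ball). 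Both routes are complete; yours identifies the non-attained extremal profile more transparently, while the paper's avoids any truncation analysis by working directly in the weighted Sobolev space.
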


\begin{proof} The proof is divided into three steps.

\noindent\textbf{Step 1.} Let $\rho:=\log\left(\frac{R}{r_-} \right)$. Obviously, $\rho^\beta$ can be viewed as a  continuous function on ${B_o^-(R)}$ by setting $\rho^\beta(o)=0$. Now we claim
\[
F^{(p-1)}(\nabla\rho),\rho^\beta F^p(\nabla\rho),\rho^{\beta+p},\rho^{\beta+1}F^{p-1}(\nabla\rho)\in L^1_{\lo}({B_o^-(R)}).\tag{3.13}\label{5.9log}
\]

\noindent (1) Since $p\leq n$, Lemma \ref{centerinteg} implies
$F^{(p-1)}(\nabla\rho)=1/{r_-^{p-1}}\in L_{\lo}^1({B_o^-(R)})$.

\noindent(2)
Let $(\mathfrak{t},\mathfrak{y})$ be the polar coordinate system around $o$ in the RFMMM $(M,\overleftarrow{F},d\mathfrak{m})$. Thus, $\mathfrak{t}=r_-(x)$.
  According to (\ref{v14-2.1}), there exists an $\epsilon\in (0,\min\{1,{R}/2\})$  such that for any $\yyy\in \overleftarrow{S_oM}$,
\[
0<\overleftarrow{\hat{\sigma}_o}(\ttt,\yyy)<2 e^{-\overleftarrow{\tau}(\yyy)}{\ttt}^{n-1},\ 0<\ttt<\epsilon.\tag{3.14}\label{imp4.12}
\]
Here, we use $\overleftarrow{*}$ to denote the corresponding geometric quantity $*$ in $(M,\overleftarrow{F})$.
Since $n-1-p\geq -1$ and $\beta<-1$,  (\ref{formulavolume})  together with (\ref{revesequn}) and (\ref{imp4.12}) yields
\begin{align*}
\int_{{B^-_o(\epsilon)}}\rho^\beta F^p(\nabla\rho) d\mathfrak{m}=&\int_{\overleftarrow{B^+_o(\epsilon)}}\rho^\beta F^p(\nabla\rho) d\mathfrak{m}=\int_{\overleftarrow{S_oM}}d\overleftarrow{\nu_o}(\yyy)\int_0^\epsilon\left[\log\left( \frac{R}{\ttt}\right)  \right]^\beta\frac{1}{{\ttt}^{p}}\overleftarrow{\hat{\sigma}_o}(\ttt,\yyy)d\ttt\\
\leq &2\int_{\overleftarrow{S_oM}}e^{-\overleftarrow{\tau}(\yyy)}d\overleftarrow{\nu_o}(\yyy)\int_0^\epsilon \left[ \log\left( \frac{R}{\ttt} \right) \right]^\beta {\ttt}^{n-1-p}d\ttt\\
\leq&\frac{2}{|1+\beta|}\left[\log\left( \frac{R}{\epsilon}\right)\right]^{\beta+1}\int_{\overleftarrow{S_oM}}e^{-\overleftarrow{\tau}(\yyy)}d\overleftarrow{\nu_o}(\yyy)<+\infty,\tag{3.15}\label{newes4.13}
\end{align*}
 which implies  $\rho^\beta F^p(\nabla\rho)\in L^{1}_{\lo}({B_o^-(R)})$.

\smallskip

\noindent(3) Since $\beta+p<n-1$, we choose a small $\epsilon\in (0,\min\{1,{R}/2\})$ such that
\[
\frac{(\log s)^{\beta+p}}{s^{n-1}}\leq 1, \text{ for }s\in(R/\epsilon,+\infty).
\]
A calculation similar to (\ref{newes4.13}) then yields
\begin{align*}
\int_{{B^-_o(\epsilon)}}\rho^{\beta+p}d\mathfrak{m}
\leq & 2\int_{\overleftarrow{S_oM}}e^{-\overleftarrow{\tau}(\yyy)}d\overleftarrow{\nu_o}(\yyy)\int_0^\epsilon\left[\log\left( \frac{R}{\ttt}\right)  \right]^{\beta+p}{\ttt}^{n-1}d\ttt\\
\leq& 2{R}^{n-1}\epsilon\int_{\overleftarrow{S_oM}}e^{-\overleftarrow{\tau}(\yyy)}d\overleftarrow{\nu_o}(\yyy)<+\infty,
\end{align*}
which   implies $\rho^{\beta+p}\in L^1_{\lo}({B_o^-(R)})$.

\smallskip

\noindent(4) The H\"older inequality together with (2) and (3)  furnishes $\rho^{\beta+1}F^{p-1}(\nabla\rho)\in L^1_{\lo}({B_o^-(R)})$.

\smallskip

Therefore, (\ref{5.9log}) follows as claimed.

\smallskip

\noindent\textbf{Step 2.}
In this step, we prove (\ref{5.8lognew}). In order to do this, set
$X:=-\rho^{\beta+1}F^{p-2}(\nabla\rho)\nabla\rho$ and $f_X:=c\rho^\beta F^{p}(\nabla\rho)$,
where  $c:=-\beta-1=|\beta+1|>0$. Then (\ref{5.9log}) indicates $X\in L^1_{\lo}(TB_o^-(R))$, $f_X\in L_{\lo}^1({B_o^-(R)})$ and $ F^p(X)/f^{p-1}_X\in L^1_{\lo}({B_o^-(R)})$.

If $f_X\leq \di X$ in the weak sense, then (\ref{5.8lognew}) would follow  from Theorem \ref{divlemf} (1)  immediately. Therefore, it suffices to show that $f_X\leq \di X$ in the weak sense, i.e.,
\begin{align*}
\int_{B_o^-(R)} \rho^{\beta+1}F^{p-2}(\nabla\rho)\langle  \nabla\rho,du\rangle d\mathfrak{m}\geq c\int_{B_o^-(R)}\rho^\beta F^p(\nabla\rho)ud\mathfrak{m},\ \forall\,u\in C^\infty_0({B_o^-(R)}) \text{ with }u\geq 0.\tag{3.16}\label{5.11lognewww}
\end{align*}

We proceed as follows.
Set $v:=\rho^{-c} u$. Thus, (\ref{5.9log}) together with (\ref{2.2newineq}) yields
\begin{align*}
&\langle  F^{p-2}(\nabla\rho)\nabla\rho,u\rho^{-c-1}d\rho \rangle= F^p(\nabla\rho) \rho^\beta u\in L^1(B_o^-(R)),\\
&\left|\langle  F^{p-2}(\nabla\rho)\nabla\rho,\rho^{-c}d u \rangle\right|\leq\lambda_F\left(\text{supp}(u)\right)\, \rho^{\beta+1}F^{p-1}(\nabla\rho)F^*(du)\in L^1(B_o^-(R)),
\end{align*}
which imply
\[
\langle F^{p-2}(\nabla\rho)\nabla\rho,dv\rangle=-c\langle  F^{p-2}(\nabla\rho)\nabla\rho,u\rho^{-c-1}d\rho \rangle+\langle  F^{p-2}(\nabla\rho)\nabla\rho,\rho^{-c}d u \rangle\in L^1(B_o^-(R)).\tag{3.17}\label{inter4.18}
\]
Choose a small $\delta\in (0,\epsilon)$, where $\epsilon$ is defined as in (\ref{imp4.12}). Thus, (\ref{inter4.18}) furnishes
 \[
 \lim_{\delta\rightarrow 0^+}\int_{\overline{B^-_o(\delta)}}\langle F^{p-2}(\nabla\rho)\nabla\rho,dv\rangle d\mathfrak{m}=0.\tag{3.18}\label{lim4.19}
 \]
 On the other hand,  (\ref{6.1}) yields
$
-\Delta_p \rho=r^{-p}_-\left[(1-p)+r_-(-\Delta(-r_-))  \right]\geq \frac{(n-p)}{r^p_-}\geq0$,
which together with  (\ref{olddivlem}) yields
\begin{align*}
0\leq &-\int_{B_o^-(R)\backslash\overline{{B^-_o(\delta)}}}v\Delta_p\rho d\mathfrak{m} =-\int_{B_o^-(R)\backslash\overline{{B^-_o(\delta)}}}\left[\di\left(v F^{p-2}(\nabla\rho)\nabla\rho   \right)-\langle F^{p-2}(\nabla\rho)\nabla\rho, dv   \rangle    \right]d\mathfrak{m}\\
=&-\int_{\partial{B^-_o(\delta)}}g_{\mathbf{n}_-}(\mathbf{n}_-,F^{p-2}(\nabla\rho)\nabla\rho)v dA+\int_{B_o^-(R)\backslash\overline{{B^-_o(\delta)}}}\langle F^{p-2}(\nabla\rho)\nabla\rho, dv   \rangle d\mathfrak{m},\tag{3.19}\label{div4.14}
\end{align*}
where  $\mathbf{n}_-$ is  the unit inward normal vector field on $\partial{B^-_o(\delta)}$ and $dA$ is the induced measure on $\partial{B^-_o(\delta)}$.

Let $(\ttt,\yyy)$ be the polar coordinates as in Step 1. Thus, $dA=\overleftarrow{\hat{\sigma}_o}(\delta,\yyy)d\overleftarrow{\nu_o}(\yyy)$.
Since $\delta\in (0,\epsilon)$, (\ref{pre2.1}) together with (\ref{imp4.12}) yields
\begin{align*}
&\left| \int_{\partial{B^-_o(\delta)}}g_{\mathbf{n}_-}(\mathbf{n}_-,F^{p-2}(\nabla\rho)\nabla\rho)v dA \right|\leq \lambda_F(\partial{B^-_o(\delta)})\int_{\partial{B^-_o(\delta)}}F^{p-1}(\nabla\rho)\rho^{-c} u dA\\
\leq &2\lambda_F(\partial{B^-_o(\delta)})\delta^{n-p}\left[ \log\left( \frac{R}{\delta} \right) \right]^{-c}\max_{\partial{B^-_o(\delta)}}u\,\int_{\overleftarrow{S_oM}}e^{-\overleftarrow{\tau}(\yyy)}d\overleftarrow{\nu_o}(\yyy)\rightarrow 0, \text{ as }\delta\rightarrow 0^+.\tag{3.20}\label{4.16divlem}
\end{align*}
Now it follows from  (\ref{lim4.19})-(\ref{4.16divlem}) that
$\int_{B_o^-(R)}\langle F^{p-2}(\nabla\rho)\nabla\rho, dv   \rangle d\mathfrak{m}\geq 0$,
which together with (\ref{inter4.18}) furnishes (\ref{5.11lognewww}).

\smallskip

\noindent\textbf{Step 3.} Now we show the constant $\vartheta^p:=\left( \frac{|\beta+1|}{p} \right)^p$ is sharp if $\lambda_F({{B^-_o(R)}})=1$. In this case, $r_+(x)=r_-(x)=:r(x)$ for all $x\in B^-_o(R)$ and hence,
$B^+_o(R)=B^-_o(R)=:B_o(R)$ and $\mathfrak{i}_o\geq R$.
Set
\[
\mathfrak{C}_{\beta,p}(B_o(R)):=\inf_{u\in C^\infty_0({B_o(R)})\setminus\{0\}}\frac{\int_{B_o(R)}\rho^{p+\beta}F^{*p}( du)d\mathfrak{m}}{\int_{B_o(R)}\rho^\beta \frac{|u|^p}{r^p}d\mathfrak{m}}.
\]
Thus, (\ref{5.8lognew}) implies $\mathfrak{C}_{\beta,p}(B_o(R))\geq \vartheta^p$. For the reverse inequality,
 set $c(\delta):=\frac{|\beta+1|+\delta}{p}$ for $\delta\in (0,1)$ and
define a function $v$ on $B_o(R)$ by
\begin{align*}v(x):=\left\{
\begin{array}{lll}
\left(\frac{\rho(x)}{s}\right)^{-c(\delta/2)}, && \text{ if  }x\in B_o(R/2),\\
\\
\left(\frac{\rho(x)}{s}\right)^{c(\delta)}, && \text{ if  }x\in {B_o(R)}\backslash B_o(R/2),
\end{array}
\right.
\end{align*}
where $s:=\log 2>0$.

By (\ref{5.8lognew}), we  define $D^{1,p}(B_o(R),\rho^{\beta+p})$ as  the completion of $C^\infty_0(B_o(R))$ with respect to the norm $|u|_D:=\left(\int_{B_o(R)}\rho^{p+\beta}F^{*p}( du)d\mathfrak{m}\right)^{\frac1p}$.
If $v\in D^{1,p}(B_o(R),\rho^{\beta+p})$, then an easy calculation similar to (\ref{sharpness 4.5}) would furnish
\[
\mathfrak{C}_{\beta,p}(B_o(R))\leq \frac{\int_{B_o(R)}\rho^{p+\beta}F^{*p}( dv)d\mathfrak{m}}{\int_{B_o(R)}\rho^\beta \frac{|u|^p}{r^p}d\mathfrak{m}}<c^p(\delta)\rightarrow \vartheta^p, \text{ as }\delta\rightarrow 0^+,
\]
which would conclude the proof. Hence, it suffices to show $v\in D^{1,p}(B_o(R),\rho^{\beta+p})$.

In order to do this, we first prove $|v|_D<\infty$. Let $(t,y)$ be the polar coordinate system around $o$ in $(M,F)$.
Since $\mathbf{K}\leq 0$ and $\overline{{B_o(R)}}$ is compact (cf. Bao et al. \cite[Theorem 6.6.1]{BCS}),  Remark \ref{Scurvature} together with (\ref{revesequn}) and $\lambda_F(B_o(R))=1$ yields two constants $k,h\in \mathbb{R}$ with
\[
-k^2\leq\mathbf{Ric}(\nabla t)\leq 0,\ -h^2\leq \mathbf{S}(\nabla t)\leq 0,\ \forall \,y\in S_oM,\ 0\leq t\leq R,
\]
which together with (\ref{Riccompar2.8}) yields
\[
\hat{\sigma}_o(t,y)\leq e^{-\tau(y)+h^2t} \mathfrak{s}^{n-1}_{-k^2}(t), \ \forall \,y\in S_oM, \ 0<t<R\leq \mathfrak{i}_o.\tag{3.21}\label{volcomp4.22}
\]
Now we study $\int_{B_o(R)}\rho^{p+\beta}F^{*p}( dv)d\mathfrak{m}$.  Let
$\epsilon$ be defined as in (\ref{imp4.12}).
  Since $\beta+1<0$ and $n-1-p\geq -1$, (\ref{formulavolume}) together with (\ref{imp4.12}) then yields
\begin{align*}
\int_{B_o(\epsilon)}\rho^{p+\beta}F^{*p}( dv)d\mathfrak{m}
\leq \frac{2 c^p(\delta/2)}{s^{-c(\delta/2)p}}\int_{S_oM}e^{-\tau(y)}d\nu_o(y)\int^\epsilon_0\left[\log\left( \frac{R}{t} \right)\right]^{\beta-c(\delta/2)p}\frac1tdt<+\infty.\tag{3.22}\label{finitepart1}
\end{align*}
On the other hand, (\ref{formulavolume})  together with (\ref{volcomp4.22}) implies
\begin{align*}
\int_{{B_o(R)}\backslash B_o({R/2})} \rho^{p+\beta}F^{*p}( dv) d\mathfrak{m}
\leq&\left( \frac{2 c(\delta)}{Rs^{c(\delta)}} \right)^p\int_{S_oM}e^{-\tau(y)}d\nu_o(y)\int_{R/2}^R \left[\log\left( \frac{R}{t}\right)\right]^{-1+\delta} { \mathfrak{s}^{n-1}_{-k^2}(t)e^{h^2t}}dt\\
\leq& \frac{(2c(\delta))^p e^{h^2R}\mathfrak{s}^{n-1}_{-k^2}(R)}{R^{p-1}s^{c(\delta p)}}\int_{S_oM}e^{-\tau(y)}d\nu_o(y)\int_{R/2}^R \left[\log\left( \frac{R}{t}\right)\right]^{-1+\delta}\frac{dt}{t}<+\infty,
\end{align*}
which together with  (\ref{finitepart1}) indicates
 $\int_{B_o(R)}\rho^{p+\beta}F^{*p}( dv)d\mathfrak{m}<\infty$, i.e.,
 $|v|_D<\infty$.

 Now set $v_\epsilon:=\max\{v-\epsilon,0\}$.
By the finiteness of $|v|_D$, one can easily check $|v_\epsilon-v|_D\rightarrow 0$ as $\epsilon\rightarrow 0^+$. On the other hand, a similar argument as in the proof of Lemma \ref{lpschcom} yields $v_\epsilon\in D^{1,p}(B_o(R),\rho^{\beta+p})$.
Therefore,
 $v\in  D^{1,p}(B_o(R),\rho^{\beta+p})$.
\end{proof}

\begin{proof}[Proof of Theorem \ref{logone}] For a reversible Cartan-Hadamard manifold, $\overleftarrow{\mathfrak{i}_o}=\mathfrak{i}_o=+\infty$. Thus, Theorem \ref{logone} follows from  Theorem \ref{logone2} directly.
\end{proof}

\subsection{Finsler manifolds with nonnegative Ricci curvature}
In this subsection, we consider the Hardy inequalities on FMMMs with nonnegative (weighted) Ricci curvature. We begin by recalling the  Laplacian  comparison theorems concerned with the (weighted) Ricci curvature.

\begin{lemma}[cf.\,\cite{Ot,Sh1,WX,Y}]\label{RicLaplace}
Let $(M,o,F, d\mathfrak{m})$ be an $n$-dimensional forward complete PFMMM.

\smallskip

(i) If $\mathbf{Ric}_N\geq 0$, $N\in [n,+\infty)$, then $\Delta r_+\leq \frac{N-1}{r_+}$ and $-\Delta(-r_-)\leq \frac{N-1}{r_-}$ hold a.e. on $M$.

\smallskip

(ii) Suppose either $\mathbf{Ric}_\infty\geq 0$ or $\mathbf{Ric}\geq 0$. Then the following inequalities hold a.e. on $M$:
\begin{align*}\left\{
\begin{array}{lll}
\Delta r_+\leq \frac{n-1}{r_+},  &\text{ if }\mathbf{S}_o^+\geq 0,\\
\\
-\Delta(-r_-)\leq \frac{n-1}{r_-}, &\text{ if }\mathbf{S}_o^-\leq 0.
\end{array}
\right.
\end{align*}
\end{lemma}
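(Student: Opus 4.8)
The plan is to reduce everything to a one–dimensional Riccati comparison along unit–speed geodesics issuing from $o$, exactly as in the Riemannian Bishop--Gromov theory. Fix $y\in S_oM$ and work in the polar coordinate system $(t,y)$ of (\ref{formulavolume}), on the maximal interval $0<t<i_y$ where $r_+$ is smooth; since $\mathrm{Cut}_o\cup\{o\}$ is closed and null, a pointwise bound there yields the asserted a.e.\ bound. Writing $\hat{\sigma}_o(t,y)=e^{-\tau(\dot\gamma_y(t))}\,\mathcal{J}(t)$, where $\mathcal{J}(t)$ is the purely metric Jacobian of the geodesic spheres (the determinant built from the Jacobi fields with respect to $g_{\dot\gamma_y}$), the identity $\Delta r_+=\partial_t\log\hat{\sigma}_o$ from the preliminaries splits as $\Delta r_+=h-\mathbf{S}(\dot\gamma_y)$, where $h:=\partial_t\log\mathcal{J}$ is the metric mean curvature of the sphere. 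The standard matrix Riccati equation $A'+A^2+R_{\dot\gamma_y}=0$ for the shape operator $A$, together with the Cauchy--Schwarz bound $\operatorname{tr}(A^2)\geq (\operatorname{tr}A)^2/(n-1)$, gives the fundamental scalar inequality $h'+\tfrac{h^2}{n-1}\leq-\mathbf{Ric}(\dot\gamma_y)$.

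For part (ii) under $\mathbf{Ric}\geq0$ I would argue directly: the inequality above yields $h'+\tfrac{h^2}{n-1}\leq 0$, and the Bishop comparison (a one–dimensional Riccati comparison anchored at the asymptotics $h(t)\sim (n-1)/t$ coming from (\ref{v14-2.1})) gives $h\leq (n-1)/t$; then $\mathbf{S}_o^+\geq0$ means $\mathbf{S}(\dot\gamma_y)\geq0$, whence $\Delta r_+=h-\mathbf{S}(\dot\gamma_y)\leq (n-1)/r_+$. Under $\mathbf{Ric}_\infty\geq0$ I would instead substitute $\mathbf{Ric}=\mathbf{Ric}_\infty-\tfrac{d}{dt}\mathbf{S}(\gamma_y(t))$ into the fundamental inequality; writing $\varphi:=\Delta r_+$ and $\mathbf{s}(t):=\mathbf{S}(\dot\gamma_y(t))$, the derivative terms cancel and one is left with $\varphi'+\tfrac{(\varphi+\mathbf{s})^2}{n-1}\leq -\mathbf{Ric}_\infty\leq0$. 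Since $\mathbf{s}\geq0$, at any first crossing of the barrier $(n-1)/t$ one has $\varphi>0$ and hence $(\varphi+\mathbf{s})^2\geq\varphi^2$, so the crossing argument for $\varphi'+\tfrac{\varphi^2}{n-1}\leq0$ applies and yields $\varphi\leq(n-1)/r_+$.

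For part (i) the same substitution with $N\in(n,\infty)$ gives $\varphi'+\tfrac{(\varphi+\mathbf{s})^2}{n-1}\leq-\mathbf{Ric}_N-\tfrac{\mathbf{s}^2}{N-n}$; the algebraic inequality $\tfrac{(\varphi+\mathbf{s})^2}{n-1}+\tfrac{\mathbf{s}^2}{N-n}\geq\tfrac{\varphi^2}{N-1}$ (a case of $\tfrac{a^2}{p}+\tfrac{b^2}{q}\geq\tfrac{(a+b)^2}{p+q}$ with $a=\varphi+\mathbf{s}$, $b=-\mathbf{s}$) then reduces it to the clean scalar Riccati inequality $\varphi'+\tfrac{\varphi^2}{N-1}\leq-\mathbf{Ric}_N\leq0$, and the comparison against the model solution $(N-1)/t$ gives $\Delta r_+\leq (N-1)/r_+$; the case $N=n$ forces $\mathbf{S}\equiv0$ and is the borderline of the previous computation. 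Finally, all statements for $r_-$ follow by passing to the reverse manifold $(M,\overleftarrow{F},d\mathfrak{m})$: by (\ref{revesequn}) and the definition of $\mathbf{Ric}_N$ one has $\overleftarrow{r}_+=r_-$, $\overleftarrow{\Delta}(\overleftarrow{r}_+)=-\Delta(-r_-)$ and $\overleftarrow{\mathbf{Ric}}_N(y)=\mathbf{Ric}_N(-y)$, while Remark \ref{Scurvature} turns $\mathbf{S}_o^-\leq0$ into $\overleftarrow{\mathbf{S}}_o^+\geq0$, so the already–proven $r_+$ bounds in the reverse manifold translate into the desired ones. I expect the main obstacle to be the Riccati comparison itself --- reconciling the universal initial asymptotics $h\sim(n-1)/t$ with the $N$–dimensional barrier $(N-1)/t$ and ruling out a crossing despite the cross term $(\varphi+\mathbf{s})^2$ --- rather than the algebraic or bookkeeping steps.
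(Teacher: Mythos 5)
Your proposal is correct and follows essentially the route the paper takes (the paper only sketches this lemma, delegating part (i) to Ohta--Sturm and part (ii) to the standard comparison theorem plus Lemma \ref{compinft}, and handling $r_-$ by passing to the reverse manifold exactly as you do): the decomposition $\Delta r_+=H-\mathbf{S}$ along radial geodesics, the matrix Riccati inequality, the completion of the square absorbing $\mathbf{s}^2/(N-n)$, and the reversal argument via (\ref{revesequn}) and Remark \ref{Scurvature} all match. The only cosmetic difference is that the paper's Appendix Lemma \ref{compinft} runs the comparison by integrating $\frac{d}{dt}\left(H\mathfrak{s}_K^2\right)$ by parts rather than by your first-crossing barrier argument; both are standard and equivalent here.
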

\begin{proof}[Sketch of the proof]  First we consider the case of $r_+$. Then (i) is Ohta and Sturm\cite[Theorem 5.2]{Ot} while (ii) follows from the standard Laplacian comparison theorem (cf. Shen \cite{Sh1} or Wu and Xin \cite{WX}) and Lemma  \ref{compinft} (also see Yin \cite[Theorem A]{Y}).
And an argument based on the RFMMM furnishes the  results in the case of $r_-$.
\end{proof}

Now we show Theorem  \ref{Rictheno}.

\begin{proof}[Proof of Theorem \ref{Rictheno}]
\textbf{(1)} Clearly, $\Omega:=M\backslash\{o\}$ is an natural domain. Let $\alpha=(p-N)/(p-1)$ and $c:=\alpha[(\alpha-1)(p-1)-\beta-1]\geq 0$.
A direct calculation together with  Lemma \ref{RicLaplace} (i) furnishes
\begin{align*}
-c\Delta_p(r^\alpha_+)=-c\alpha^{p-1}r_+^{(\alpha-1)(p-1)-1}\left[  (\alpha-1)(p-1)+r_+\Delta r_+ \right]\geq 0.
\end{align*}
Thus, Lemma \ref{mainlemmforcr} (i) yields (\ref{5.12newnew}) immediately. In the sequel, we show $\vartheta^p:=\left( \frac{|n+\beta|}{p} \right)^p$ is sharp if $\lambda_F=1$, $N=n$ and $p+\beta>-n$. In this case, $\mathbf{Ric}_N\geq 0$ means $\mathbf{Ric}=\mathbf{Ric}_n\geq 0$ and $\mathbf{S}=0$. Then the sharpness follows from the same argument as in the proof of Theorem \ref{frirstcurHard2}.

\smallskip

\textbf{(2)} Let $\alpha=(N-p)/(p-1)$ and  $c=\alpha[(\alpha+1)(p-1)+\beta+1]<0$. A direct calculation together with Lemma \ref{mainlemmforcr} (ii) yields
\begin{align*}
-c\Delta_p(r^{-\alpha}_-)=c\alpha^{p-1} r_-^{-(\alpha+1)(p-1)-1}\left[ -(\alpha+1)(p-1)+r_-(-\Delta(-r_-)) \right]\geq 0.
\end{align*}
The rest proof is the same as \textbf{(1)} and hence, we omit it.
\end{proof}

A similar argument also furnishes the $\mathbf{Ric}_\infty/ \mathbf{Ric}$ version of Theorem \ref{Rictheno}. We omit the proof.
\begin{theorem} \label{Rictheno2}
Let $(M,o,F,d\mathfrak{m})$ be an $n$-dimensional forward complete PFMMM with either $\mathbf{Ric}_\infty\geq0$ or $\mathbf{Ric}\geq 0$.

\smallskip

\noindent(1) Suppose $\mathbf{S}_o^+\geq 0$. Given $p,\beta\in \mathbb{R}$ with $p>n$ and $\beta<-n$, for any $u\in C^\infty_0(M\setminus\{o\})$, we have
 \[
\int_{M\backslash\{o\}}r^{\beta+p}_+\max\{ F^{*p}(\pm du) \}d\mathfrak{m}\geq \left( \frac{|n+\beta|}{p} \right)^p\int_{M\backslash\{o\}}r_+^{\beta} {|u|^p}d\mathfrak{m}.
\]

\smallskip

\noindent(2) Suppose $\mathbf{S}_o^-\leq 0$. Given $p,\beta\in \mathbb{R}$ with $1<p<n$ and  $\beta<-n$, for any $u\in C^\infty_0(M\setminus\{o\})$, we have
 \[
\int_{M\backslash\{o\}}r^{\beta+p}_-\max\{ F^{*p}(\pm du) \}d\mathfrak{m}\geq \left( \frac{|n+\beta|}{p} \right)^p\int_{M\backslash\{o\}}r_-^{\beta} {|u|^p}d\mathfrak{m}.
\]

\smallskip

\noindent In particular, the constants in (1) and (2) are sharp  if $\lambda_F=1$ and $p+\beta>-n$.
\end{theorem}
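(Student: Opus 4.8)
The plan is to follow the proof of Theorem \ref{Rictheno} line by line, substituting Lemma \ref{RicLaplace}~(i) with Lemma \ref{RicLaplace}~(ii). Under $\mathbf{Ric}_\infty\geq0$ (or $\mathbf{Ric}\geq0$) together with the relevant sign of the $S$-curvature, part~(ii) delivers the comparisons $\Delta r_+\leq(n-1)/r_+$ and $-\Delta(-r_-)\leq(n-1)/r_-$, i.e.\ exactly the bounds one would obtain by formally setting $N=n$ in Theorem \ref{Rictheno}. The present theorem is therefore the ``$N=n$'' instance, which cannot be reached inside Theorem \ref{Rictheno} itself because bounding $\mathbf{Ric}_n$ from below already forces $\mathbf{S}=0$; here the $S$-curvature sign hypothesis is what replaces that rigidity.

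For part~(1) I would take $\Omega:=M\setminus\{o\}$, $\alpha:=(p-n)/(p-1)>0$ and $c:=\alpha[(\alpha-1)(p-1)-\beta-1]$. Since $(\alpha-1)(p-1)=1-n$, we get $c=\alpha(-n-\beta)>0$ from $\beta<-n$. Invoking $\mathbf{S}_o^+\geq0$ and Lemma \ref{RicLaplace}~(ii), the identity
\[
-c\Delta_p(r_+^\alpha)=-c\alpha^{p-1}r_+^{(\alpha-1)(p-1)-1}\left[(\alpha-1)(p-1)+r_+\Delta r_+\right]
\]
has a bracket bounded by $(1-n)+(n-1)=0$, so $-c\Delta_p(r_+^\alpha)\geq0$ in the weak sense. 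The three integrability requirements of Lemma \ref{mainlemmforcr}~(i) are automatic on $M\setminus\{o\}$, since compact subsets stay bounded away from $o$ and hence all powers of $r_+$ remain bounded there; applying Lemma \ref{mainlemmforcr}~(i) with $\vartheta_{\alpha,\beta,p}=|n+\beta|/p$ then yields the inequality.

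Part~(2) is the mirror image: I would take $\alpha:=(n-p)/(p-1)>0$ (legitimate since $1<p<n$), so that $(\alpha+1)(p-1)=n-1$ and $c:=\alpha[(\alpha+1)(p-1)+\beta+1]=\alpha(n+\beta)<0$; then $\mathbf{S}_o^-\leq0$ and Lemma \ref{RicLaplace}~(ii) give $-c\Delta_p(r_-^{-\alpha})\geq0$ weakly, and Lemma \ref{mainlemmforcr}~(ii) closes it with $\vartheta_{-\alpha,\beta,p}=|n+\beta|/p$. For sharpness, when $\lambda_F=1$ the metric is reversible, so $r_+=r_-=r$, the two inequalities coincide, and $\mathbf{S}_o^+\geq0\Leftrightarrow\mathbf{S}_o^-\leq0$, which is precisely the $S$-curvature hypothesis of Lemma \ref{impsharplem}. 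Since $\beta<-n$ and $p+\beta>-n$ together amount to $p>\max\{1,-n-\beta\}$, case~(2) of that lemma supplies test functions $v\in D^{1,p}(M\setminus\{o\},r^{\beta+p})$ obeying the strict bound (\ref{fakeinq4.4}); feeding them into the infimum ratio exactly as in Theorem \ref{frirstcurHard2} produces $\mathfrak{C}_{\beta,p}\leq c^p(\delta)\to(|n+\beta|/p)^p$ as $\delta\to0^+$, matching the lower bound already established.

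I expect no genuine obstacle, since the argument is a faithful transcription of Theorem \ref{Rictheno}. The only points demanding care are the sign bookkeeping in the $p$-Laplacian identities, which fixes the correct exponent $\alpha$ and the sign of $c$ in each part, and the observation that reversibility is exactly what makes the two distinct $S$-curvature hypotheses of parts~(1) and~(2) collapse into the single condition $\mathbf{S}_o^+\geq0$ required by Lemma \ref{impsharplem}.
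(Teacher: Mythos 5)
Your proposal is correct and is exactly the argument the paper intends: the paper omits the proof of Theorem \ref{Rictheno2}, stating only that it follows by "a similar argument" to Theorem \ref{Rictheno}, and your transcription — replacing Lemma \ref{RicLaplace}(i) by Lemma \ref{RicLaplace}(ii), with $\alpha=(p-n)/(p-1)$, $c=\alpha(-n-\beta)>0$ in part (1) and $\alpha=(n-p)/(p-1)$, $c=\alpha(n+\beta)<0$ in part (2) — reproduces it faithfully, including the correct reduction of the sharpness claim to Lemma \ref{impsharplem} via the reversible-case equivalence $\mathbf{S}_o^+\geq0\Leftrightarrow\mathbf{S}_o^-\leq0$.
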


In the sequel, we present two applications of Theorem \ref{Rictheno2}.
\begin{definition}\label{Hanshukongj}
Let $(M,F, d\mathfrak{m})$ be an $n$-dimensional closed reversible FMMM. Given $o\in M$, set $r(x):=d_F(o,x)$. Let $\Omega$ be either $M$ or $M\backslash\{o\}$.
Given $p\in (1,+\infty)$ and $\beta<-n$ with $p+\beta>-n$, define $W^{1,p}(\Omega, r^{\beta+p})$ as the completion of $C^\infty_0(\Omega)$ under the norm
\[
\|u\|_{p,\beta}:=\left( \int_{\Omega}|u|^p{r}^{p+\beta} d\mathfrak{m}+\int_{\Omega} F^{*p}(d u) {r}^{p+\beta} d\mathfrak{m}\right)^{\frac1p}.
\]
\end{definition}

By the compactness of $M$, one can easily verify
 $D^{1,p}(M\backslash\{o\},r^{p+\beta})\subset W^{1,p}(M\backslash\{o\},r^{p+\beta})$, where $D^{1,p}(M\backslash\{o\},r^{p+\beta})$ is defined in Definition \ref{DefDS}.
 Moreover, Theorem \ref{Rictheno2} yields the following result.
\begin{theorem}\label{spaceequ}
Let $(M,o,F,d\mathfrak{m})$ be an $n$-dimensional closed reversible PFMMM with $\mathbf{S}_o^+\geq 0$ and either $\mathbf{Ric}_\infty\geq0$ or $\mathbf{Ric}\geq 0$.
Then for any $p\in (1,n)\cup (n,+\infty)$ and $\beta<-n$ with $p+\beta>-n$, we have
\[
D^{1,p}(M\backslash\{o\},r^{p+\beta})= W^{1,p}(M\backslash\{o\},r^{p+\beta}).
\]
\end{theorem}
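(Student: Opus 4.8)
The plan is to deduce the equality of the two completions from the equivalence of their defining norms on the common dense subspace $C^\infty_0(M\setminus\{o\})$. Both $D^{1,p}(M\setminus\{o\},r^{p+\beta})$ and $W^{1,p}(M\setminus\{o\},r^{p+\beta})$ are completions of $C^\infty_0(M\setminus\{o\})$, and the two norms $\|\cdot\|_D$ and $\|\cdot\|_{p,\beta}$ differ \emph{only} in the zeroth-order term: the former weights $|u|^p$ by $r^\beta$ while the latter weights it by $r^{p+\beta}$, the first-order term $\int r^{p+\beta}F^{*p}(du)\,d\mathfrak{m}$ being identical. Hence it suffices to prove $C_1\|u\|_D\le\|u\|_{p,\beta}\le C_2\|u\|_D$ for all $u\in C^\infty_0(M\setminus\{o\})$; then a sequence is $\|\cdot\|_D$-Cauchy if and only if it is $\|\cdot\|_{p,\beta}$-Cauchy, equivalent Cauchy sequences for one norm are equivalent for the other, and the two completions coincide.

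First I would treat the upper estimate $\|u\|_{p,\beta}\le C_2\|u\|_D$, which recovers the inclusion $D^{1,p}\subset W^{1,p}$ already noted in the text. Since $M$ is closed, $r=d_F(o,\cdot)$ is bounded, say $r\le R_0:=\max_{x\in M}r(x)<\infty$. Then $r^{p+\beta}|u|^p=r^p\,r^\beta|u|^p\le R_0^{\,p}\,r^\beta|u|^p$ pointwise, so $\int r^{p+\beta}|u|^p\,d\mathfrak{m}\le R_0^{\,p}\int r^\beta|u|^p\,d\mathfrak{m}$; together with the identical gradient term this yields $\|u\|_{p,\beta}^p\le\max\{R_0^{\,p},1\}\,\|u\|_D^p$.

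The substantive direction is $\|u\|_D\le C_1^{-1}\|u\|_{p,\beta}$, and this is exactly where Theorem \ref{Rictheno2} is used. Reversibility gives $F^*(-du)=F^*(du)$, hence $\max\{F^{*p}(\pm du)\}=F^{*p}(du)$; moreover, in the reversible case $\mathbf{S}_o^+\ge0$ is equivalent to $\mathbf{S}_o^-\le0$, so both parts of Theorem \ref{Rictheno2} are available and together cover the whole range $p\in(1,n)\cup(n,\infty)$, $\beta<-n$, with $r_+=r_-=r$. Applying it to $u\in C^\infty_0(M\setminus\{o\})$ gives
\[
\int_{M\setminus\{o\}}r^\beta|u|^p\,d\mathfrak{m}\le\left(\frac{p}{|n+\beta|}\right)^p\int_{M\setminus\{o\}}r^{\beta+p}F^{*p}(du)\,d\mathfrak{m},
\]
where $|n+\beta|>0$ because $\beta<-n$. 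Since both $\int r^\beta|u|^p\,d\mathfrak{m}$ and $\int r^{\beta+p}F^{*p}(du)\,d\mathfrak{m}$ are then bounded by a constant multiple of $\|u\|_{p,\beta}^p$, we obtain $\|u\|_D^p\le\left[(p/|n+\beta|)^p+1\right]\|u\|_{p,\beta}^p$, which completes the norm equivalence and hence the proof.

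I expect the difficulties to be bookkeeping rather than conceptual. The two points to handle with care are: verifying that the hypotheses of Theorem \ref{Rictheno2} genuinely cover the entire admissible range of $p$ (which hinges on reversibility turning the assumption $\mathbf{S}_o^+\ge0$ into the $\mathbf{S}_o^-\le0$ needed for the subcritical case $1<p<n$, and on the exclusion of $p=n$ being built into the hypothesis); and realizing both abstract completions inside one common ambient space, say $L^p(M\setminus\{o\},r^{p+\beta}\,d\mathfrak{m})$ equipped with distributional differentials, so that the conclusion is literal set equality rather than an abstract isomorphism. Here the hypothesis $p+\beta>-n$ is what guarantees $r^{p+\beta}\in L^1_{\lo}$ near $o$ via Lemma \ref{centerinteg}, making that ambient space well-defined.
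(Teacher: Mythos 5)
Your proposal is correct and rests on exactly the same key ingredient as the paper's proof: applying Theorem \ref{Rictheno2} (with reversibility converting $\mathbf{S}_o^+\geq 0$ into the $\mathbf{S}_o^-\leq 0$ needed for $1<p<n$, so that both ranges of $p$ are covered) to smooth compactly supported functions in order to dominate $\int r^\beta|u|^p\,d\mathfrak{m}$ by the gradient term, while the reverse inequality follows from the boundedness of $r$ on the closed manifold. The paper packages this as a Cauchy-sequence argument with Fatou's lemma rather than as a norm equivalence on the dense subspace, but the two formulations are interchangeable here, so this is essentially the same proof.
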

\begin{proof}
It is enough to show  $W^{1,p}(M\backslash\{o\},r^{p+\beta})\subset D^{1,p}(M\backslash\{o\},r^{p+\beta})$. For any $u\in W^{1,p}(M\backslash\{o\},r^{p+\beta})$, there exists a sequence $u_j\in C^\infty_0(M\backslash\{o\})$ converging to $u$ under $\|\cdot\|_{p,\beta}$.
Thus, for any $\epsilon>0$, there exists $N>0$ such that for any $j>N$, $\|u_j-u\|_{p,\beta}<\left( \frac{|n+\beta|}{p} \right)\epsilon$. And Lemma \ref{Soleweakder} in Appendix \ref{SObespa} implies that $u_j$ also converges to $u$ pointwise a.e..
Now for $i>N$,
 Fatou's lemma together with Theorem \ref{Rictheno2} yields
\begin{align*}
&\left(\int_{M\backslash\{o\}} |u_i-u|^p r^{\beta}d\mathfrak{m}\right)^{\frac1p}=\left(\int_{M\backslash\{o\}} \underset{j\rightarrow +\infty}{\lim\inf}\,|u_i-u_j|^p r^{\beta}d\mathfrak{m}\right)^{\frac1p}\leq \underset{j\rightarrow +\infty}{\lim\inf}\left(\int_{M\backslash\{o\}} |u_i-u_j|^p r^{\beta}d\mathfrak{m}\right)^{\frac1p}\\
\leq&  \left( \frac{|n+\beta|}{p} \right)^{-1} \underset{j\rightarrow +\infty}{\lim\inf}\left(\int_{M}r^{\beta+p}F^{*p}(du_i-d u_j )d\mathfrak{m}\right)^{\frac1p}\leq  \left( \frac{|n+\beta|}{p} \right)^{-1}\underset{j\rightarrow +\infty}{\lim\inf}\|u_i-u_j\|_{p,\beta}\\
\leq &\left( \frac{|n+\beta|}{p} \right)^{-1}\underset{j\rightarrow +\infty}{\lim\inf}\left(\|u_i-u\|_{p,\beta}+\|u_j-u\|_{p,\beta}  \right)\leq {\epsilon},
\end{align*}
which together with $\|u_i-u\|_{p,\beta}\rightarrow 0$ implies $\|u_i-u\|_D\rightarrow 0$ and hence, $u\in D^{1,p}(M\backslash\{o\},r^{r+\beta})$.
\end{proof}

By the zero extension,  $W^{1,p}(M\backslash\{o\}, r^{\beta+p})$ can be viewed as a subset of $W^{1,p}(M, r^{\beta+p})$. On the other hand, we have the following result, whose proof is postponed until Appendix \ref{SObespa}.
\begin{proposition}\label{thsob} Let $(M,F, d\mathfrak{m})$, $r$, $p$ and $\beta$ be as in Definition \ref{Hanshukongj}.
If $u\in W^{1,p}({M}, {r}^{\beta+p})\cap C({M})$ with $u(o)=0$, then $u|_{M\backslash\{o\}}\in W^{1,p}({{M}\backslash \{o\}}, {r}^{\beta+p})$.
\end{proposition}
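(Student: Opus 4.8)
The plan is to approximate $u$ in the norm $\|\cdot\|_{p,\beta}$ by functions vanishing in a neighbourhood of $o$, and then to mollify those. First I would use the identification furnished by Lemma \ref{Soleweakder} to regard the completion element $u\in W^{1,p}(M,r^{\beta+p})$ as a genuine function admitting a weak differential $du$ with $\int_M F^{*p}(du)\,r^{\beta+p}\,d\mathfrak{m}<\infty$. Since $\beta+p\in(-n,0)$, on each ball $B_o(\delta)$ the weight satisfies $r^{\beta+p}\geq \delta^{\beta+p}>0$, so $F^{*p}(du)\in L^1_{\lo}(M)$; as $\Lambda_F<\infty$ on the closed manifold $M$, the norm $F^*$ is comparable to a fixed auxiliary Euclidean norm, whence $u\in W^{1,p}_{\lo}(M)$ in the ordinary sense. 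In particular the standard chain rule for Lipschitz truncations and the fact that $du=0$ a.e.\ on the level set $\{u=0\}$ are both available.

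The key device is truncation towards zero. For $\epsilon>0$ set
\[
u_\epsilon:=\max\{u-\epsilon,0\}-\max\{-u-\epsilon,0\},
\]
so that $|u_\epsilon|\leq|u|$, $|u-u_\epsilon|=\min\{|u|,\epsilon\}$ and $du_\epsilon=\mathbf{1}_{\{|u|>\epsilon\}}\,du$. Because $u$ is continuous with $u(o)=0$, there is a radius $\delta_\epsilon>0$ with $|u|<\epsilon$ on $B_o(\delta_\epsilon)$, whence $u_\epsilon\equiv 0$ on $B_o(\delta_\epsilon)$; thus $u_\epsilon$ vanishes near $o$. I would then verify $\|u_\epsilon-u\|_{p,\beta}\to 0$ as $\epsilon\to0^+$. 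The zeroth-order contribution $\int_M\min\{|u|,\epsilon\}^p\,r^{\beta+p}\,d\mathfrak{m}$ tends to $0$ by dominated convergence, being controlled by $|u|^p\,r^{\beta+p}\in L^1(M)$. For the gradient contribution, $d(u-u_\epsilon)=\mathbf{1}_{\{|u|\leq\epsilon\}}\,du$, and since $\{|u|\leq\epsilon\}\downarrow\{u=0\}$ as $\epsilon\downarrow0$,
\[
\int_M F^{*p}(d(u-u_\epsilon))\,r^{\beta+p}\,d\mathfrak{m}=\int_{\{|u|\leq\epsilon\}}F^{*p}(du)\,r^{\beta+p}\,d\mathfrak{m}\longrightarrow \int_{\{u=0\}}F^{*p}(du)\,r^{\beta+p}\,d\mathfrak{m}=0,
\]
again by dominated convergence together with the vanishing of $du$ a.e.\ on $\{u=0\}$.

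It remains to place each $u_\epsilon$ in $W^{1,p}(M\backslash\{o\},r^{\beta+p})$. Since $u_\epsilon$ is supported in the compact set $M\backslash B_o(\delta_\epsilon)$, on which $r^{\beta+p}$ is smooth and bounded above and below and $F^{*p}$ is comparable to a fixed Riemannian norm, the restriction of $\|\cdot\|_{p,\beta}$ there is equivalent to the ordinary $W^{1,p}$-norm. Mollifying $u_\epsilon$ in local charts with mollifiers of support smaller than $\tfrac12\delta_\epsilon$ then produces functions $\phi_j\in C^\infty_0(M\backslash\{o\})$ with $\|\phi_j-u_\epsilon\|_{p,\beta}\to 0$, so $u_\epsilon\in W^{1,p}(M\backslash\{o\},r^{\beta+p})$. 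Viewing both completions as closed subspaces of the weighted Sobolev space of Lemma \ref{Soleweakder} (they carry the same norm, and $C^\infty_0(M\backslash\{o\})\subset C^\infty(M)$), the relation $u_\epsilon\to u$ in $\|\cdot\|_{p,\beta}$ forces $u|_{M\backslash\{o\}}\in W^{1,p}(M\backslash\{o\},r^{\beta+p})$.

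The main obstacle I foresee is not the truncation estimate, which is elementary once set up, but the two identifications underpinning it. The first is that the abstract completion element $u$ genuinely is a weakly differentiable function for which the Lipschitz chain rule and the vanishing of $du$ on $\{u=0\}$ are valid, together with the requirement that $W^{1,p}(M\backslash\{o\},r^{\beta+p})$ embed as a \emph{closed} subspace on which norm-limits stay; this is exactly what the appendix lemma must deliver. The second is that mollification respects the \emph{Finslerian} weighted norm rather than a linear one, which I would control through the comparability of $F^{*p}$ with a Euclidean norm via $\Lambda_F$ on the compact pieces away from $o$.
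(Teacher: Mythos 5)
Your proof follows essentially the same route as the paper: truncate $u$ at height $\epsilon$, use continuity together with $u(o)=0$ to make the truncation vanish in a neighbourhood of $o$, and pass to the limit in $\|\cdot\|_{p,\beta}$ by dominated convergence (the paper reduces to $u\geq 0$ via Lemma \ref{maxminle} and places the truncations in $W^{1,p}(M\backslash\{o\},r^{\beta+p})$ by citing Corollary \ref{corominmax} and Lemma \ref{easylemma1}, where you truncate two-sidedly and mollify by hand, but these differences are cosmetic). One small slip: $\beta+p$ need not lie in $(-n,0)$ --- the hypotheses only give $\beta+p>-n$, and $\beta+p>0$ is possible when $p>n$ --- so $r^{\beta+p}$ may vanish at $o$ and your lower bound $r^{\beta+p}\geq\delta^{\beta+p}$ on $B_o(\delta)$ fails there; this does not damage the argument, since the weak differentiability facts you need ($du=0$ a.e.\ on $\{u=0\}$ and the Lipschitz chain rule) already follow from $u\in W^{1,1}(M)$ as supplied by Lemma \ref{Soleweakder}, and away from $o$ the weight is genuinely bounded below, which is all the mollification step requires.
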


The second application of Theorem \ref{Rictheno2} is as follows.

\begin{theorem}\label{reverRicinfty}
Let $(M,o,F,d\mathfrak{m})$ be an $n$-dimensional closed reversible PFMMM with $\mathbf{Ric}_\infty\geq0$ and $\mathbf{S}_o^+\geq 0$. Thus,
 for any $p\in (1,n)\cup(n,\infty)$ and $\beta<-n$ with $p+\beta>-n$, we have
 \[
\int_{M}r^{\beta+p}F^*(d u )^p d\mathfrak{m}\geq \left( \frac{|n+\beta|}{p} \right)^p\int_{M}r^{\beta} {|u|^p}d\mathfrak{m}, \ \forall\, u\in \mathfrak{C}^\infty(M,o),\tag{3.23}\label{new3.26re}
\]
where $\mathfrak{C}^\infty(M,o):=\{u\in C^\infty(M): \,u(o)=0 \}$. In particular, $\left( \frac{|n+\beta|}{p} \right)^p$ is sharp with respect to $\mathfrak{C}^\infty(M,o)$.
\end{theorem}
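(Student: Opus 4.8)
The plan is to deduce the inequality over $\mathfrak{C}^\infty(M,o)$ from the inequality already established for $C^\infty_0(M\setminus\{o\})$ in Theorem \ref{Rictheno2}, using Proposition \ref{thsob} and Theorem \ref{spaceequ} as a bridge between the two function classes. First I would fix $u\in\mathfrak{C}^\infty(M,o)$ and verify that $u\in W^{1,p}(M,r^{\beta+p})$. Since $M$ is closed, $C^\infty(M)=C^\infty_0(M)$, and both $|u|^p$ and $F^{*p}(du)$ are bounded on $M$; because $\beta+p>-n$, Lemma \ref{centerinteg}(1) (with exponent $-(\beta+p)<n$) shows $r^{\beta+p}$ is integrable near $o$, while away from $o$ the function $r$ is bounded below, so $\int_M r^{\beta+p}d\mathfrak{m}<\infty$. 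Hence $\|u\|_{p,\beta}<\infty$ and $u\in W^{1,p}(M,r^{\beta+p})$.

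Next, since $u\in C(M)$ with $u(o)=0$, Proposition \ref{thsob} gives $u|_{M\setminus\{o\}}\in W^{1,p}(M\setminus\{o\},r^{\beta+p})$, and Theorem \ref{spaceequ}, whose hypotheses (closed, reversible, $\mathbf{S}_o^+\geq0$, $\mathbf{Ric}_\infty\geq0$) are exactly those assumed here, identifies this with $D^{1,p}(M\setminus\{o\},r^{p+\beta})$. By Definition \ref{DefDS} there is then a sequence $u_j\in C^\infty_0(M\setminus\{o\})$ with $\|u_j-u\|_D\to0$. For each $u_j$, Theorem \ref{Rictheno2} (part (1) when $p>n$, part (2) when $1<p<n$, using $\mathbf{S}_o^+\geq0\Leftrightarrow\mathbf{S}_o^-\leq0$ in the reversible case and $\max\{F^{*p}(\pm du_j)\}=F^{*p}(du_j)$) yields
\[
\int_{M} r^{\beta+p}F^{*p}(du_j)\,d\mathfrak{m}\geq \left(\frac{|n+\beta|}{p}\right)^p\int_{M} r^{\beta}|u_j|^p\,d\mathfrak{m}.
\]
Passing to the limit is routine: the $L^p(r^\beta)$-norm and the seminorm $w\mapsto(\int_M r^{p+\beta}F^{*p}(dw)\,d\mathfrak{m})^{1/p}$ are both dominated by $\|\cdot\|_D$ (the latter satisfying the triangle inequality because $F^*$ is a genuine norm when $\lambda_F=1$), so each side converges to the corresponding integral for $u$, giving (\ref{new3.26re}); the omitted point $\{o\}$ is $\mathfrak{m}$-null, so replacing $M\setminus\{o\}$ by $M$ costs nothing.

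For sharpness I would record the inclusions $C^\infty_0(M\setminus\{o\})\subset\mathfrak{C}^\infty(M,o)\subset D^{1,p}(M\setminus\{o\},r^{p+\beta})$, the first being obvious and the second just proved. Writing $Q(w)$ for the Hardy quotient, the inequality above gives $\inf_{\mathfrak{C}^\infty(M,o)}Q\geq(|n+\beta|/p)^p$, while the first inclusion forces $\inf_{\mathfrak{C}^\infty(M,o)}Q\leq\inf_{C^\infty_0(M\setminus\{o\})}Q=(|n+\beta|/p)^p$, the final equality being the sharpness clause of Theorem \ref{Rictheno2} (applicable since $\lambda_F=1$ and $p+\beta>-n$). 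Squeezing these bounds yields equality, so the constant is sharp with respect to $\mathfrak{C}^\infty(M,o)$.

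The genuine difficulty is the passage from test functions supported \emph{away} from $o$ to functions that vanish only at the single point $o$ (yet may be arbitrarily large just off $o$, where the weight $r^\beta$ is singular). This entire step is absorbed into Proposition \ref{thsob} and the space identification of Theorem \ref{spaceequ}, so the substantive technical content lives in those two results; once they are in hand, the limiting argument and the squeeze for sharpness are both elementary.
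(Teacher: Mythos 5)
Your argument is correct and follows the paper's own proof essentially verbatim: the inequality is obtained by combining Proposition \ref{thsob} and Theorem \ref{spaceequ} to place $u|_{M\setminus\{o\}}$ in $D^{1,p}(M\setminus\{o\},r^{p+\beta})$, approximating by $C^\infty_0(M\setminus\{o\})$ functions, and passing to the limit in Theorem \ref{Rictheno2}. Your sharpness step, which cites the sharpness clause of Theorem \ref{Rictheno2} together with the inclusion $C^\infty_0(M\setminus\{o\})\subset\mathfrak{C}^\infty(M,o)$, is just a repackaging of the paper's zero-extension of the approximants of the near-extremal function from Lemma \ref{impsharplem}, so it is the same argument in substance.
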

\begin{proof}
Given $u\in \mathfrak{C}^\infty(M,o)$, Proposition \ref{thsob} together with Theorem \ref{spaceequ} implies that $u|_{M\backslash\{o\}}$ belongs to $D^{1,p}(M\backslash\{o\},r^{p+\beta})$. Hence, there is a sequence $u_j\in C^\infty_0(M\backslash\{o\})$ with $\|u_j-u|_{M\backslash\{o\}}\|_D\rightarrow 0$, which together with Theorem \ref{Rictheno2} furnishes
(\ref{new3.26re}).

For the sharpness of the constant, let $v\in D^{1,p}(M\backslash\{o\},r^{p+\beta})$ be defined as in Lemma \ref{impsharplem}. Thus, there exist  a sequence $v_j\subset C^\infty_0(M\backslash\{o\})$ with $\|v_j- v\|_D\rightarrow0$. By the zero extension, $v_j$ can be viewed as a function in $\mathfrak{C}^\infty(M,o)$. Due to this fact, the rest proof is the same as the one of Theorem \ref{frirstcurHard2}.
\end{proof}

\begin{remark}\label{remarksolesf} Theorem \ref{reverRicinfty} indicates  what kind of function the Hardy inequality remains valid for on a closed manifold. And
Theorem \ref{reverRicinfty}   factually holds for any $u\in W^{1,p}(M,r^{p+\beta})\cap C(M)$ with $u(o)=0$.
\end{remark}

\begin{proof}[Proof of Theorem \ref{cofunRic}.]According to Example \ref{seconex},   the assumption implies $\mathbf{Ric}_\infty\geq 0$ and $\mathbf{S}_o^+\geq 0$.
Thus, Theorem \ref{cofunRic} follows from Theorem \ref{reverRicinfty} directly.
\end{proof}

\section{Hardy inequality for $p$-sub/superharmonic functions}\label{Hardforpharm}

\subsection{A weighted Hardy inequality}

In the section, we study the Hardy inequalities for  $p$-sub/superharmonic functions in the Finsler setting.
Inspired by D'Ambrosio and Dipierro \cite{DD}, we have the following result.

\begin{theorem}\label{wegHardin}Let  $(M,F,d\mathfrak{m})$ be a forward complete FMMM and let $\Omega$ be a natural domain in $M$.
Given $p>1$ and $\alpha\in \mathbb{R}$, let $\rho\in W^{1,p}_{\lo}(\Omega)$ be a nonnegative function satisfying the following conditions:

\smallskip

(1) $-(p-1-\alpha)\Delta_p\rho\geq0$ in the weak sense;

\smallskip

(2) Additionally suppose $\frac{F^p(\nabla \rho)}{\rho^{p-\alpha}}$, $\rho^\alpha\in L^1_{\loc}(\Omega)$ if $\alpha>p$.

\smallskip

\noindent Then we have the following weighted Hardy inequality
\[
\int_\Omega \rho^\alpha \max\{F^{*p}(\pm d{u}) \}d\mathfrak{m}\geq \left( \frac{|p-1-\alpha|}{p} \right)^{p}\int_\Omega \frac{F^p(\nabla\rho)}{\rho^{p-\alpha}}|{u}|^pd\mathfrak{m}, \ \forall \,u\in C^\infty_0(\Omega).\tag{4.1}\label{weightedhard}
\]
\end{theorem}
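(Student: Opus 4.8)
The plan is to reduce (\ref{weightedhard}) to the divergence theorem, Theorem \ref{divlemf}, in exactly the way Lemma \ref{mainlemmforcr} was obtained. I would set
\[
X:=-\frac{1}{p-1-\alpha}\,\rho^{\alpha-p+1}F^{p-2}(\nabla\rho)\,\nabla\rho,\qquad f_X:=\rho^{\alpha-p}F^p(\nabla\rho)=\frac{F^p(\nabla\rho)}{\rho^{p-\alpha}},
\]
so that the right-hand side of (\ref{weightedhard}) is precisely $(|p-1-\alpha|/p)^p\int_\Omega|u|^pf_X\,d\mathfrak{m}$. The case $\alpha=p-1$ is trivial, since the constant then vanishes, so I assume $\alpha\neq p-1$. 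A homogeneity computation shows that the scalar coefficient of $\nabla\rho$ in $X$ is negative when $\alpha<p-1$ and positive when $\alpha>p-1$; accordingly $F(-X)$ simplifies cleanly in the first case and $F(X)$ in the second, and in both the exponents of $\rho$ and of $F(\nabla\rho)$ collapse to
\[
\frac{F^p(\mp X)}{f_X^{p-1}}=\frac{\rho^\alpha}{|p-1-\alpha|^p}.
\]
I would therefore invoke part (1) of Theorem \ref{divlemf} when $\alpha<p-1$ and part (2) when $\alpha>p-1$; once hypotheses (i)--(ii) of that theorem are checked, multiplying through by $(|p-1-\alpha|/p)^p$ delivers (\ref{weightedhard}) at once.

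The heart of the argument is verifying hypothesis (i), that $f_X\leq\di X$ in the weak sense, and this is where condition (1) is consumed. Because $\rho$ is merely $W^{1,p}_{\lo}(\Omega)$, I would regularize by $\rho_\epsilon:=\rho+\epsilon$ and test the weak inequality $-(p-1-\alpha)\Delta_p\rho\geq0$ against the nonnegative function $w:=\rho_\epsilon^{\alpha-p+1}u$ for $u\in C^1_0(\Omega)$ with $u\geq0$; admissibility of such $w$ follows by extending the class of test functions from $C^1_0$ to nonnegative $W^{1,p}_0$ functions by density, which is legitimate since $F^{p-1}(\nabla\rho)\in L^{p/(p-1)}_{\lo}(\Omega)$. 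Using $d\rho_\epsilon=d\rho$ and $\langle\nabla\rho,d\rho\rangle=F^2(\nabla\rho)$, the integrand splits as
\[
F^{p-2}(\nabla\rho)\langle\nabla\rho,dw\rangle=(\alpha-p+1)\rho_\epsilon^{\alpha-p}u\,F^p(\nabla\rho)+\rho_\epsilon^{\alpha-p+1}F^{p-2}(\nabla\rho)\langle\nabla\rho,du\rangle.
\]
Multiplying by $p-1-\alpha$, the coefficient of the first term becomes $-(p-1-\alpha)^2\leq0$, while the second integral is exactly $-(p-1-\alpha)^2\langle X_\epsilon,du\rangle$; dividing by the strictly positive number $(p-1-\alpha)^2$ then rearranges everything into $-\int_\Omega\langle X_\epsilon,du\rangle\,d\mathfrak{m}\geq\int_\Omega uf_{X,\epsilon}\,d\mathfrak{m}$. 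In this way the sign of $p-1-\alpha$ takes care of itself and the sub- and superharmonic regimes are handled simultaneously, and letting $\epsilon\to0^+$ under the dominated convergence theorem completes the verification.

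What makes both the $\epsilon\to0^+$ passage and the application of Theorem \ref{divlemf} legitimate is the local integrability of $f_X$, of $F(X)$ (equivalently $\rho^{\alpha-p+1}F^{p-1}(\nabla\rho)$), and of $F^p(X)/f_X^{p-1}=\rho^\alpha/|p-1-\alpha|^p$, since these supply the dominating functions. By H\"older's inequality all three reduce to local integrability of suitable powers of $\rho$ together with $F^p(\nabla\rho)\in L^1_{\lo}(\Omega)$, which holds because $\rho\in W^{1,p}_{\lo}(\Omega)$: for $\alpha>p$ this is precisely the extra hypothesis (2), while for $\alpha\leq p$ it is automatic. I expect the genuine obstacle to be the weak divergence inequality itself — coordinating the regularization, the density extension of admissible test functions, and the limit within the integrability constraints — whereas the concluding step is a direct substitution into Theorem \ref{divlemf} once $F^p(\mp X)/f_X^{p-1}$ has been identified with $\rho^\alpha/|p-1-\alpha|^p$.
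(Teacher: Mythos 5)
Your skeleton coincides with the paper's: the same pair $X$, $f_X$ up to a harmless positive rescaling, the same identification $F^p(\mp X)/f_X^{p-1}=\rho^\alpha/|p-1-\alpha|^p$, the same case split on the sign of $p-1-\alpha$ feeding into parts (1) and (2) of Theorem \ref{divlemf}, and the same algebraic identity obtained by testing the weak inequality against $\rho_\epsilon^{\alpha-p+1}u$. The gaps are analytic, and there are two. First, your claim that local integrability of $f_X=\rho^{\alpha-p}F^p(\nabla\rho)$ and of $\rho^\alpha$ is ``automatic for $\alpha\leq p$'' is false: for $\alpha<p$ the former involves a \emph{negative} power of $\rho$, and its local integrability is precisely what the theorem is designed to produce (this is the content of Remark \ref{th1rem}, which would be vacuous if it followed from the hypotheses); for $\alpha<0$ the latter is likewise a negative power of $\rho$ and is not controlled by $\rho\in W^{1,p}_{\lo}(\Omega)$. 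Since the definition of ``$f_X\leq \di X$ in the weak sense'' and hypothesis (ii) of Theorem \ref{divlemf} presuppose exactly these integrabilities, you cannot apply that theorem to the unregularized pair, nor can you supply dominating functions for your $\epsilon\to0^+$ passage inside the weak divergence inequality. The paper's order of operations is the reverse of yours and is essential: it applies Theorem \ref{divlemf} to the $\varepsilon$-regularized pair (all integrability being trivial because $\rho_\varepsilon\geq\varepsilon$), obtains the inequality (\ref{4.0}) with $\rho_\varepsilon$ on both sides, and only then lets $\varepsilon\to0^+$ in that \emph{integrated} inequality, via Fatou on one side and dominated ($\alpha\geq0$) or monotone ($\alpha<0$) convergence on the other.

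Second, for $\alpha>p-1$ your test function $w=\rho_\epsilon^{\alpha-p+1}u$ involves a positive power of $\rho_\epsilon$, and a function in $W^{1,p}_{\lo}(\Omega)$ need not be locally bounded; so $w$ need not belong to $W^{1,p}_0$ (its differential need not even be $p$-integrable), and the density extension you invoke cannot reach it. This is exactly why the paper truncates from above, setting $\rho_{k\varepsilon}:=\min\{\rho_\varepsilon,k\}$, smooths by taking $\psi_n=\exp(\phi_n)$ with $\phi_n\to\log\rho_{k\varepsilon}$ in $W^{1,p}$ and $\varepsilon\leq\psi_n\leq k$, tests against the genuinely admissible functions $u/\psi_n^{p-1-\alpha}\in C^\infty_0(\Omega)$, and then passes $n\to\infty$ followed by $k\to\infty$; the $k\to\infty$ limit on the right-hand side of (\ref{case1new}) in the regime $p-\alpha<0$ is precisely where hypothesis (2) is consumed, through a H\"older estimate combining $F^p(\nabla\rho)/\rho^{p-\alpha}$ and $\rho^\alpha$. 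Your argument as written is sound only for $\alpha<p-1$ (where $\rho_\epsilon^{\alpha-p+1}$ and $\rho_\epsilon^{\alpha-p}$ are bounded by powers of $\epsilon$), and even then only after reorganizing it so that Theorem \ref{divlemf} is applied at fixed $\epsilon$ and the limit is taken last.
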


\begin{proof}
If $p-\alpha-1=0$, (\ref{weightedhard}) is trivial. So we assume $p-\alpha-1\neq0$ in the sequel. Given $\varepsilon\in (0,1)$, set $\rho_\varepsilon:=\rho+\varepsilon$ and
\[
X:=-(p-1-\alpha)\frac{F^{p-2}(\nabla\rho_\varepsilon)\,\nabla \rho_\varepsilon}{\rho^{p-1-\alpha}_\varepsilon},\ f_X:=(p-1-\alpha)^2\frac{F^p(\nabla \rho_\varepsilon)}{\rho^{p-\alpha}_\varepsilon}.
\]
The proof is divided into two steps.

\smallskip

\noindent \textbf{Step 1.} In this step, we show that (\ref{weightedhard}) holds if $f_X\leq \di X$ in the weak sense.

\smallskip

In fact,
if $p-1-\alpha>0$ (resp., $p-1-\alpha<0$), Theorem \ref{divlemf} (1) (resp., (2)) yields
\[
\left( \frac{|p-1-\alpha|}{p} \right)^p\int_\Omega \frac{F^p(\nabla\rho_\varepsilon)}{\rho_\varepsilon^{p-\alpha}} |{u}|^p d\mathfrak{m}\leq \int_\Omega\rho_\varepsilon^\alpha\,\max\{F^{*p}(\pm d{u})\}d\mathfrak{m}.\tag{4.2}\label{4.0}
\]

We point out that (\ref{4.0}) implies (\ref{weightedhard}).

\smallskip

\noindent \textbf{Case 1.} Suppose $\alpha\geq 0$. Since $\frac{F^p(\nabla\rho_\varepsilon)}{\rho_\varepsilon^{p-\alpha}}|{u}|^p\in L^{1}(\Omega)$ and
\[
 \rho_\varepsilon^\alpha\,\max\{F^{*p}(\pm d{u})\}\leq (\rho+1)^\alpha\,\max\{F^{*p}(\pm d{u})\}\in L^1(\Omega),
\]
(\ref{4.0}) together with Fatou's lemma   and Lebesgue's dominated convergence theorem  yields (\ref{weightedhard}). That is,
\begin{align*}
&\left( \frac{|p-1-\alpha|}{p} \right)^p\int_{\Omega} |{u}|^p  \frac{F^p(\nabla\rho)}{\rho^{p-\alpha}}d\mathfrak{m}=
\left( \frac{|p-1-\alpha|}{p} \right)^p\int_{\Omega} \underset{\varepsilon\rightarrow0^+}{\lim\inf}\left(|{u}|^p  \frac{F^p(\nabla\rho_\varepsilon)}{\rho^{p-\alpha}_\varepsilon}\right)d\mathfrak{m}\\
\leq& \underset{\varepsilon\rightarrow0^+}{\lim\inf}\left( \frac{|p-1-\alpha|}{p} \right)^p\int_{\Omega} \left(|{u}|^p  \frac{F^p(\nabla\rho_\varepsilon)}{\rho^{p-\alpha}_\varepsilon}\right)d\mathfrak{m}\leq \underset{\varepsilon\rightarrow0^+}{\lim\inf}\int_{\Omega} \rho^\alpha_\varepsilon \max\{F^*(\pm d{u}) \}d\mathfrak{m}\\
=  & \int_{\Omega} \rho^\alpha \max\{F^{*p}(\pm d{u}) \}d\mathfrak{m}.
\end{align*}

\smallskip

\noindent \textbf{Case 2.} Suppose $\alpha< 0$.
In this case, $\rho_{\varepsilon}^\alpha\,\max\{F^{*p}(\pm d{u})\}\in L^1(\Omega)$ and
\[
 \rho_{\varepsilon_2}^\alpha\,\max\{F^{*p}(\pm d{u})\}\leq \rho_{\varepsilon_1}^\alpha\,\max\{F^{*p}(\pm d{u})\},\ \text{ if } \varepsilon_1\leq \varepsilon_2.
\]
Now (\ref{4.0}) together with Fatou's lemma  and Lebesgue's monotone convergence Theorem   yields  (\ref{weightedhard}).

\smallskip

\noindent\textbf{Step 2.} From Step 1, it remains to show that  $f_X\leq \di X$ in the weak sense, that is, for any nonnegative function ${u}\in C^\infty_0(\Omega)$, one has
\[
(p-1-\alpha)^2\int_{\Omega} \frac{F^p(\nabla \rho_\varepsilon)}{\rho^{p-\alpha}_\varepsilon} {u} d\mathfrak{m}\leq (p-1-\alpha)\int_{\Omega} \frac{F^{p-2}(\nabla\rho_\varepsilon)\langle \nabla\rho_\varepsilon, d{u} \rangle}{\rho^{p-1-\alpha}_\varepsilon}d\mathfrak{m}.\tag{4.3}\label{2.18}
\]

\smallskip

In order to prove this,  choose an open set $U$ with ${\text{supp}({u})}\subset U\subset \subset \Omega$.   Let $W^{1,p}(U)$ be the completion of $C^\infty_0(U)$ with respect to the norm $\|u\|_{W^{1,p}(U)}:=\left(\int_U |u|^p d\mathfrak{m} +\int_U F^{*p}(du)d\mathfrak{m}\right)^{1/p}$. Since $\overline{U}$ is compact, $W^{1,p}(U)$ is  a  Sobolev space in the sense of Hebey \cite[Definition 2.1]{H}.

For any $k>\varepsilon$, define $\rho_{k\varepsilon}:=\inf\{\rho_\varepsilon|_{U},k\}\in L^1(U)$.
It is easy to check   $\ln\rho_{k\varepsilon}\in W^{1,p}(U)$ and hence, there is a sequence $\phi_n\in C^\infty(U)$ such that
\[
\|\phi_n-\ln\rho_{k\varepsilon}\|_{W^{1,p}(U)}\rightarrow0,\ \phi_n\rightarrow \ln \rho_{k\varepsilon} \text{ pointwise a.e.,}\ \ln\varepsilon\leq  \phi_n \leq \ln k.
\]
Let $\psi_n:=\exp(\phi_n)$. Then
$\psi_n\in C^\infty(U)$ with $\varepsilon\leq \psi_n\leq k$,
\[
\int_U |\ln \psi_n-\ln \rho_{k\varepsilon}|^p d\mathfrak{m}\rightarrow 0,\ \int_U F^{*p}\left( \frac{d \psi_n}{\psi_n}-\frac{d \rho_{k\varepsilon}}{\rho_{k\varepsilon}}  \right)d\mathfrak{m}\rightarrow 0.\tag{4.4}\label{**3.4}
\]

Now we choose $v_n:={u}/\psi_n^{p-1-\alpha}\in C^\infty_0(\Omega)$ as test functions. Since $-(p-1-\alpha)\Delta_p\rho\geq 0$ in the weak sense, one has
\[
(p-1-\alpha)^2\int_{\Omega} \frac{F^{p-2}(\nabla\rho)\,\langle \nabla\rho,d\psi_n  \rangle}{\psi^{p-\alpha}_n}{u}\, d\mathfrak{m}\leq (p-1-\alpha)\int_{\Omega}  \frac{F^{p-2}(\nabla\rho)\,\langle \nabla\rho,d{u}  \rangle}{\psi^{p-1-\alpha}_n}\, d\mathfrak{m}. \tag{4.5}\label{2.19}
\]
In the following, we derive (\ref{2.18}) from (\ref{2.19}).

\smallskip

\noindent\textbf{Case 1.} Suppose $p-1-\alpha>0$.
We study the right hand side of (\ref{2.19}) first. By
(\ref{2.2newineq}), one has
\begin{align*}
\left|\frac{F^{p-2}(\nabla\rho)}{\psi_n^{p-1-\alpha}}\langle \nabla\rho,d{u} \rangle\right|
\leq \lambda_F(U)\frac{F^{p-1}(\nabla\rho)}{\varepsilon^{p-1-\alpha}}F^*(d{u})\in L^{1}(\Omega).\tag{4.6}\label{2.7}
\end{align*}
Since $\psi_n\rightarrow \rho_{k\varepsilon}$  pointwise a.e.,  (\ref{2.7})  together with Lebesgue's dominated convergence theorem yields
\[
\lim_{n\rightarrow \infty}\int_{\Omega} \frac{F^{p-2}(\nabla\rho)}{\psi_n^{p-1-\alpha}}\langle \nabla\rho,d{u} \rangle d\mathfrak{m}=\int_{\Omega} \frac{F^{p-2}(\nabla\rho)}{\rho_{k\varepsilon}^{p-1-\alpha}}\langle \nabla\rho,d{u} \rangle d\mathfrak{m}.\tag{4.7}\label{2.9}
\]

Now we consider the left hand side of (\ref{2.19}). Firstly, an argument similar to the one above furnishes
\[
\lim_{n\rightarrow\infty}\left\|  F^{p-2}(\nabla\rho) \left\langle \nabla\rho,\frac{d\rho_{k\varepsilon}}{\rho_{k\varepsilon}}  \right\rangle  \left(\frac1{\psi_n^{p-\alpha-1}}- \frac1{\rho^{p-\alpha-1}_{k\varepsilon}} \right)                 u\right\|_{L^1(\Omega)}=0.\tag{4.8}\label{3.14new}
\]
Secondly,  (\ref{2.2newineq}) together with the H\"older inequality  and (\ref{**3.4}) implies
\begin{align*}
&\left\|\frac{F^{p-2}(\nabla\rho)}{\psi_n^{p-1-\alpha}} \left \langle \nabla\rho,\frac{d\psi_n}{\psi_n}- \frac{d\rho_{k\varepsilon}}{\rho_{k\varepsilon}}\right\rangle{u}\right\|_{L^1(\Omega)}\\
\leq &\lambda_F(U)\frac{\max_U|{u}|}{\varepsilon^{p-1-\alpha}}\|F^{p-1}(\nabla\rho)\|_{L^{p/(p-1)}(U)} \,\left\| F^*\left( \frac{d\psi_n}{\psi_n}- \frac{d\rho_{k\varepsilon}}{\rho_{k\varepsilon}} \right)\right\|_{L^p(U)}\rightarrow 0,\ n\rightarrow+\infty,
\end{align*}
which together with (\ref{3.14new}) yields
\[
\lim_{n\rightarrow \infty}\int_{\Omega} \frac{F^{p-2}(\nabla\rho)}{\psi_n^{p-\alpha}}  \langle \nabla\rho,d\psi_n\rangle   {u}          d\mathfrak{m}=\int_{\Omega} \frac{F^{p-2}(\nabla\rho)}{\rho_{k\varepsilon}^{p-\alpha}}  \langle \nabla\rho,d\rho_{k\varepsilon}\rangle {u} d\mathfrak{m}.\tag{4.9}\label{2.10}
\]

Now (\ref{2.19}) together with (\ref{2.9}) and (\ref{2.10}) yields
\[
(p-1-\alpha)^2\int_{\Omega} \frac{F^{p-2}(\nabla\rho)\,\langle \nabla\rho,d\rho_{k\varepsilon}  \rangle}{\rho_{k\varepsilon}^{p-\alpha}}{u}\, d\mathfrak{m}\leq (p-1-\alpha)\int_{\Omega}  \frac{F^{p-2}(\nabla\rho)\,\langle \nabla\rho,d{u}  \rangle}{\rho_{k\varepsilon}^{p-1-\alpha}}\, d\mathfrak{m}. \tag{4.10}\label{case1new}
\]
Due to $p-\alpha>1$, (\ref{2.2newineq}) implies
\begin{align*}
&\left|\frac{F^{p-2}(\nabla\rho)\,\langle \nabla\rho,d\rho_{k\varepsilon}  \rangle}{\rho_{k\varepsilon}^{p-\alpha}}{u}\right|\leq \frac{F^p(\nabla\rho){u}}{\varepsilon^{p-\alpha}}\in L^1(\Omega),\\
 &\left|\frac{F^{p-2}(\nabla\rho)\,\langle \nabla\rho,d{u}  \rangle}{\rho_{k\varepsilon}^{p-1-\alpha}}\right|\leq \lambda_F(U)\frac{F^{p-1}(\nabla\rho)F^*(d{u})}{\varepsilon^{p-1-\alpha}}\in L^1(\Omega),
\end{align*}
which together with (\ref{case1new}) and  Lebesgue's dominated convergence theorem yield
(\ref{2.18}).

\smallskip

\noindent \textbf{Case 2.}  Suppose $p-1-\alpha<0$. Since $1/\psi^{p-1-\alpha}\leq k^{\alpha-p+1}$, by a suitable modification to the
argument  in
Case 1, one gets  (\ref{case1new}) again. Then (\ref{2.18}) follows from a similar argument if
$p-\alpha\geq 0$. Now assume $p-\alpha<0$, in which case
we study the left hand side of (\ref{case1new}) first.
Note that
\begin{align*}
\frac{F^{p-2}(\nabla\rho)\,\langle \nabla\rho,d\rho_{k\varepsilon}  \rangle}{\rho_{k\varepsilon}^{p-\alpha}}{u}=\frac{F^{p-2}(\nabla\rho)\,\langle \nabla\rho,d\rho_{\varepsilon}  \rangle}{\rho_{k\varepsilon}^{p-\alpha}}\chi_{\{\rho_\varepsilon\leq k\}}{u}
=\frac{F^p(\nabla\rho)}{\rho_{k\varepsilon}^{p-\alpha}}\chi_{\{\rho_\varepsilon\leq k\}}{u},
\end{align*}
which implies that $ \frac{F^{p-2}(\nabla\rho)\,\langle \nabla\rho,d\rho_{k\varepsilon}  \rangle}{\rho_{k\varepsilon}^{p-\alpha}}{u} $ is an increasing sequence of nonnegative functions  converging pointwise to $\frac{F^{p-2}(\nabla\rho)\,\langle \nabla\rho,d\rho_{\varepsilon}  \rangle}{\rho_{\varepsilon}^{p-\alpha}}{u}$ as $k\rightarrow +\infty$. Hence, Lebesgue's monotone convergence theorem  furnishes
\[
\lim_{k\rightarrow+\infty}\int_{\Omega} \frac{F^{p-2}(\nabla\rho)\,\langle \nabla\rho,d\rho_{k\varepsilon}  \rangle}{\rho_{k\varepsilon}^{p-\alpha}}{u}\, d\mathfrak{m}=\int_{\Omega} \frac{F^{p-2}(\nabla\rho)\,\langle \nabla\rho,d\rho_{\varepsilon}  \rangle}{\rho_{\varepsilon}^{p-\alpha}}{u}\, d\mathfrak{m}.\tag{4.11}\label{3.******new}
\]

Now we study the right hand side of (\ref{case1new}).   The H\"older inequality together with Condition (2) (i.e., $F^p(\nabla\rho)/\rho^{p-\alpha}$, $\rho^\alpha\in L^1_{\lo}(\Omega)$) yields
\begin{align*}
\left\|\left(\frac{F^{p}(\nabla\rho)}{\rho^{{p-\alpha}}_{k\varepsilon}}\right)^{\frac1{p'}}  \left({\rho^{{\alpha}}_{k\varepsilon}}\right)^{\frac1p}\right\|_{L^1(U)}
\leq \left\|\frac{F^{p}(\nabla\rho)}{\rho^{{p-\alpha}}_{k\varepsilon}} \right\|^{\frac1{p'}}_{L^{1}(U)}\left\|  {\rho^{{\alpha}}_{k\varepsilon}} \right\|^{\frac1p}_{L^1(U)}
<+\infty,
\end{align*}
where $p'=p/(p-1)$.
Therefore, we have
\begin{align*}
\left|  \frac{F^{p-2}(\nabla\rho)\,\langle \nabla\rho,d{u}  \rangle}{\rho_{k\varepsilon}^{p-1-\alpha}}\right|&\leq \lambda_F(U)\, F^*(d{u})\,\frac{F^{p-1}(\nabla\rho)}{\rho^{\frac{p-\alpha}{p'}}_{k\varepsilon}}\frac{1}{\rho^{-\frac{\alpha}{p}}_{k\varepsilon}}\leq C\left(\frac{F^{p}(\nabla\rho)}{\rho^{{p-\alpha}}_{k\varepsilon}}\right)^{\frac1{p'}}  \left({\rho^{{\alpha}}_{k\varepsilon}}\right)^{\frac1p}\in L^1(U),
\end{align*}
which together with Lebesgue's dominated convergence theorem   furnishes
\[
\lim_{k\rightarrow+\infty}\int_{\Omega}  \frac{F^{p-2}(\nabla\rho)\,\langle \nabla\rho,d{u}  \rangle}{\rho_{k\varepsilon}^{p-1-\alpha}}\, d\mathfrak{m}=\int_{\Omega}  \frac{F^{p-2}(\nabla\rho)\,\langle \nabla\rho,d{u}  \rangle}{\rho_{\varepsilon}^{p-1-\alpha}}\, d\mathfrak{m}.\tag{4.12}\label{2.24}
\]
Now (\ref{2.18}) follows from  (\ref{case1new}),  (\ref{3.******new}) and (\ref{2.24}).
\end{proof}

\begin{remark}\label{th1rem}
For $p>\alpha$, the above theorem implies $\frac{F^p(\nabla \rho)}{\rho^{p-\alpha}}\in L^1_{\loc}(\Omega)$ naturally.
\end{remark}

\subsection{Best constant and Brezis-V\'azquez improvement}
Suppose the assumption of Theorem \ref{wegHardin} holds and additionally assume that $p-1-\alpha\neq0$, $\rho>0$ a.e.  and $d\rho\neq0$ a.e..
Then one can define a norm on $C^\infty_0(\Omega)$ by
\[
|u|_D:=\left( \int_\Omega \rho^\alpha\max\{F^{*p}(\pm d{u})\}d \mathfrak{m}\right)^{\frac1p}.
\]
Denote by $D^{1,p}(\Omega,\rho^\alpha)$  the closure of  $C^\infty_0(\Omega)$ with respect to the norm $|\cdot|_D$.
\begin{remark}
In fact, $D^{1,p}(\Omega,\rho^\alpha)$ is  the completion of  $C^\infty_0(\Omega)$ with respect to the norm
\[
\|u\|_D:=\left(\int_{\Omega} \frac{F^p(\nabla\rho)}{\rho^{p-\alpha}}|u|^pd\mathfrak{m}+ \int_\Omega \rho^\alpha\max\{F^{*p}(\pm d{u})\}d \mathfrak{m}\right)^{\frac1p}.
\]
In view of Theorem \ref{wegHardin}, $|\cdot|_D$ is equivalent to $\|\cdot\|_D$.
\end{remark}

\begin{proposition}\label{truebestconstant}Let $(M,F,d\mathfrak{m})$ be a forward complete FMMM and let $\Omega\subset M$ be a natural domain.
Given $p>1$ with $p-1-\alpha\neq0$, suppose $\rho\in W^{1,p}_{\lo}(\Omega)$ satisfies the following properties:

\smallskip

(i) $-(p-1-\alpha)\Delta_p\rho\geq0$ in the weak sense;

(ii) $\frac{F^p(\nabla \rho)}{\rho^{p-\alpha}}$, $\rho^\alpha\in L^1_{\loc}(\Omega)$ if $\alpha>p$.

(iii) $\rho>0$ a.e. and $d\rho\neq0$ a.e. with $F^*(d\rho)\geq F^*(-d\rho)$;

\smallskip

\noindent Set
\[
\mathfrak{C}_{p,\alpha}(\Omega):=\inf_{u\in D^{1,p}(\Omega,\rho^\alpha)\backslash\{0\}}\frac{\int_{\Omega} \rho^\alpha\max\{F^{*p}(\pm du)\}d\mathfrak{m}}{\int_{\Omega} \frac{F^p(\nabla\rho)}{\rho^{p-\alpha}}|u|^pd\mathfrak{m}}.
\]
Thus,  if $\rho^\frac{p-1-\alpha}p\in D^{1,p}(\Omega,\rho^\alpha)$, then
\[
\mathfrak{C}_{p,\alpha}(\Omega)=\left(\frac{|p-1-\alpha|}{p}\right)^p
\] and $C\rho^\frac{p-1-\alpha}p$ is an extremal, where $C\in \mathbb{R}\backslash\{0\}$.
\end{proposition}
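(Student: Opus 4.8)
The plan is to prove the two inequalities $\mathfrak{C}_{p,\alpha}(\Omega)\geq\left(\frac{|p-1-\alpha|}{p}\right)^p$ and $\mathfrak{C}_{p,\alpha}(\Omega)\leq\left(\frac{|p-1-\alpha|}{p}\right)^p$ separately, the first being essentially free and the second supplied by the candidate extremal. For the lower bound, Theorem \ref{wegHardin} already gives the weighted Hardy inequality, i.e. a Rayleigh quotient bounded below by $\left(\frac{|p-1-\alpha|}{p}\right)^p$, for every $u\in C^\infty_0(\Omega)$. Since the numerator $\int_\Omega\rho^\alpha\max\{F^{*p}(\pm du)\}\,d\mathfrak{m}=|u|_D^p$ and the denominator $\int_\Omega\frac{F^p(\nabla\rho)}{\rho^{p-\alpha}}|u|^p\,d\mathfrak{m}$ are both continuous with respect to $\|\cdot\|_D$ (equivalently $|\cdot|_D$, by the Remark preceding the proposition), I would extend the inequality to all of $D^{1,p}(\Omega,\rho^\alpha)$ by approximating $u$ in $\|\cdot\|_D$-norm by functions in $C^\infty_0(\Omega)$ and passing to the limit, which yields $\mathfrak{C}_{p,\alpha}(\Omega)\geq\left(\frac{|p-1-\alpha|}{p}\right)^p$.

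For the reverse inequality I would feed the candidate $w:=\rho^{\frac{p-1-\alpha}{p}}$ directly into the quotient. By hypothesis $w\in D^{1,p}(\Omega,\rho^\alpha)$, and $w\neq 0$ since $\rho>0$ a.e. Writing $\gamma:=\frac{p-1-\alpha}{p}\neq 0$, the chain rule gives $dw=\gamma\rho^{\gamma-1}\,d\rho$. The essential computation is $\max\{F^{*p}(\pm dw)\}$, and this is exactly where the asymmetry of $F^*$ together with the sign of $\gamma$ must be controlled: since $\rho^{\gamma-1}>0$, the covector $dw$ is a positive multiple of $d\rho$ when $\gamma>0$ and a negative multiple when $\gamma<0$, so in either case hypothesis (iii), $F^*(d\rho)\geq F^*(-d\rho)$, forces $\max\{F^{*p}(\pm dw)\}=|\gamma|^p\rho^{(\gamma-1)p}F^{*p}(d\rho)$. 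Using $F^*(d\rho)=F(\nabla\rho)$ (from $\nabla\rho=\mathfrak{L}^{-1}(d\rho)$ and $F^*\circ\mathfrak{L}=F$) turns this into $|\gamma|^p\rho^{(\gamma-1)p}F^p(\nabla\rho)$, which is precisely aligned with the weight $F^p(\nabla\rho)$ appearing in the denominator.

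Substituting, the numerator becomes $|\gamma|^p\int_\Omega\rho^{\alpha+(\gamma-1)p}F^p(\nabla\rho)\,d\mathfrak{m}$ and the denominator becomes $\int_\Omega\rho^{\gamma p-(p-\alpha)}F^p(\nabla\rho)\,d\mathfrak{m}$; since $\gamma p=p-1-\alpha$, both exponents collapse to $-1$, so numerator and denominator share the common factor $\int_\Omega\rho^{-1}F^p(\nabla\rho)\,d\mathfrak{m}$. This integral is finite because $w\in D^{1,p}(\Omega,\rho^\alpha)$ gives $|w|_D^p=|\gamma|^p\int_\Omega\rho^{-1}F^p(\nabla\rho)\,d\mathfrak{m}<\infty$, and it is strictly positive because $\rho>0$ and $d\rho\neq 0$ a.e. make the integrand positive a.e. Hence the quotient at $w$ equals exactly $|\gamma|^p=\left(\frac{|p-1-\alpha|}{p}\right)^p$, giving $\mathfrak{C}_{p,\alpha}(\Omega)\leq\left(\frac{|p-1-\alpha|}{p}\right)^p$. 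Combined with the lower bound this yields equality, and since the quotient is invariant under $u\mapsto Cu$ for $C\in\mathbb{R}\setminus\{0\}$ (both integrals scale by $|C|^p$), every $C\rho^{\frac{p-1-\alpha}{p}}$ attains the infimum and is an extremal.

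The only genuinely delicate step is the evaluation of $\max\{F^{*p}(\pm dw)\}$: in the irreversible setting $F^*(d\rho)$ and $F^*(-d\rho)$ generally differ, and without control the $\max$ would not match the denominator weight, destroying the sharp cancellation. Condition (iii) is tailored precisely to force the $\max$ onto $F^{*p}(d\rho)=F^p(\nabla\rho)$. The remaining technicalities—the weak chain rule $dw=\gamma\rho^{\gamma-1}\,d\rho$ and the integrability/positivity claims—are routine once one invokes $w\in D^{1,p}(\Omega,\rho^\alpha)$ together with the local integrability hypotheses (i)--(ii).
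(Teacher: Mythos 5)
Your proposal is correct and follows essentially the same route as the paper: the lower bound comes from Theorem \ref{wegHardin} (extended to $D^{1,p}(\Omega,\rho^\alpha)$ by density), and the upper bound comes from evaluating the Rayleigh quotient at $\rho^{\frac{p-1-\alpha}{p}}$, where condition (iii) forces $\max\{F^{*p}(\pm dw)\}=|\gamma|^p\rho^{(\gamma-1)p}F^{*p}(d\rho)$ and both exponents collapse to $-1$. The paper compresses this into ``a direct calculation together with the assumption''; you have simply made explicit the sign analysis and the density step it leaves implicit.
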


\begin{proof} Theorem \ref{wegHardin} implies $\mathfrak{C}_{p,\alpha}(\Omega)\geq \left(\frac{|p-1-\alpha|}{p}\right)^p$.
Now set $\varphi=C\rho^{\frac{p-\alpha-1}p}$. A direct calculation together with the assumption yields
\[
\rho^\alpha\max\{F^{*p}(\pm d\varphi)\}=|C|^p\left(\frac{|p-\alpha-1|}{p}\right)^p\frac{F^{*p}(d\rho)}{\rho}\in L^1(\Omega),
\]
which furnishes
\begin{align*}
\int_{\Omega}\rho^\alpha\max\{F^{*p}(\pm d\varphi)\}d\mathfrak{m}=\left(\frac{|p-\alpha-1|}{p}\right)^p\int_{\Omega} \frac{F^p(\nabla\rho)}{\rho^{p-\alpha}}|\varphi|^p d\mathfrak{m}.
\end{align*}
Hence, it follows that $\mathfrak{C}_{p,\alpha}(\Omega)\leq \left(\frac{|p-1-\alpha|}{p}\right)^p$.
\end{proof}

We also have the following Brezis-V\'azquez improvement for $p=2$ and $\alpha=0$.
\begin{proposition}Let $(M,F,d\mathfrak{m})$ be a forward complete FMMM with finite uniformity constant $\Lambda_F$ and let $\Omega\subset M$ be a natural domain. Suppose that $\rho\in W^{1,2}_{\lo}(\Omega)$ satisfies
$\rho>0$ a.e.  and $-\Delta \rho\geq 0$ in the weak sense.
Set   $\Theta(\Omega)$ as
\[
\Theta(\Omega):=\inf_{u\in C^1_0(\Omega)\backslash\{0\}}\frac{\int_{\Omega} \rho  \min\{F^{*2}(\pm du)\}d\mathfrak{m}}{\int_{\Omega}\rho\, u^2 d\mathfrak{m}}.
\]
Then we have
\begin{align*}
\int_{\Omega}\max\{F^{*2}(\pm du)\} d\mathfrak{m}\geq&  \frac{1}4  \int_{\Omega}  \frac{u^2}{\rho^2}F^{2}(\nabla\rho) d\mathfrak{m}+\frac{\Theta(\Omega)}{\Lambda_F}\int_{\Omega} {u^2}d\mathfrak{m}, \ \forall\,u\in C^\infty_0(\Omega).\tag{4.13}\label{new3.16p2}
\end{align*}
In particular, if $\rho^{1/2}\notin D^{1,2}(\Omega)$ but $\mathfrak{C}_{2,0}(\Omega)=1/4$, then the best constant $\mathfrak{C}_{2,0}(\Omega)$ is not achieved.
\end{proposition}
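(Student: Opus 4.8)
The plan is to run a ground-state substitution adapted to the non-symmetry of $F^*$, turning that non-symmetry into a genuine remainder via the refined Cauchy-type inequality (\ref{ineq}), whose coefficient $\frac1{\Lambda_F}$ explains the hypothesis $\Lambda_F<\infty$. For $u\in C^\infty_0(\Omega)$ write $v:=u\rho^{-1/2}$, so that $u=\rho^{1/2}v$ and $du=\xi+\zeta$ with $\xi:=\tfrac{v}{2}\rho^{-1/2}\,d\rho$ and $\zeta:=\rho^{1/2}\,dv$. The reason for this splitting is that $F^{*2}(\xi)$ reproduces the first term on the right of (\ref{new3.16p2}): since $u^2=\rho v^2$, on $\{v>0\}$ one has $F^{*2}(\xi)=\tfrac{v^2}{4}\rho^{-1}F^{*2}(d\rho)=\tfrac{u^2}{4\rho^2}F^{2}(\nabla\rho)$ (using $F^2(\nabla\rho)=F^{*2}(d\rho)$), while on $\{v<0\}$ the same value is produced by $F^{*2}(-\xi)$, because $-\xi=\tfrac{|v|}2\rho^{-1/2}\,d\rho$ is a positive multiple of $d\rho$.

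The heart of the argument is a pointwise estimate obtained by applying (\ref{ineq}) with the pair $(\xi,\zeta)$ on $\{v>0\}$ and with $(-\xi,-\zeta)$ on $\{v<0\}$, and then bounding $\max\{F^{*2}(\pm du)\}$ below by whichever of $F^{*2}(du),F^{*2}(-du)$ matches the chosen branch. On each branch the leading term is exactly $\tfrac{u^2}{4\rho^2}F^2(\nabla\rho)$, and the last term of (\ref{ineq}) is $\tfrac{1}{\Lambda_F}F^{*2}(\pm\zeta)=\tfrac{\rho}{\Lambda_F}F^{*2}(\pm dv)\ge\tfrac{\rho}{\Lambda_F}\min\{F^{*2}(\pm dv)\}$. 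For the cross term I would use the identity $g^*_\xi(\xi,\zeta)=\zeta(\mathfrak{L}^{-1}\xi)$ together with the positive homogeneity of $\mathfrak{L}^{-1}$; a short check shows it collapses on both branches to the same quantity $v\,\langle\nabla\rho,dv\rangle=\tfrac12\langle\nabla\rho,d(v^2)\rangle$ (the sign flip on $\{v<0\}$ cancels between $-\xi$ and $-\zeta$). Thus a.e. on $\Omega$,
\[
\max\{F^{*2}(\pm du)\}\ \ge\ \frac{u^2}{4\rho^2}F^2(\nabla\rho)+\frac12\langle\nabla\rho,d(v^2)\rangle+\frac{\rho}{\Lambda_F}\min\{F^{*2}(\pm dv)\}.
\]

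Integrating, the cross term drops out because $-\Delta\rho\ge0$ in the weak sense and $v^2\ge0$ is admissible, so $\tfrac12\int_\Omega\langle\nabla\rho,d(v^2)\rangle\,d\mathfrak{m}\ge0$; and the definition of $\Theta(\Omega)$ applied to $v$ gives $\int_\Omega\rho\min\{F^{*2}(\pm dv)\}\,d\mathfrak{m}\ge\Theta(\Omega)\int_\Omega\rho v^2\,d\mathfrak{m}=\Theta(\Omega)\int_\Omega u^2\,d\mathfrak{m}$, since $\rho v^2=u^2$. Combining the two yields (\ref{new3.16p2}). The main obstacle is purely a regularity matter: as $\rho\in W^{1,2}_{\lo}(\Omega)$ is positive only a.e., $v=u\rho^{-1/2}$ need not be $C^1$, so neither the chain rule above nor the test-function property for $\Theta$ and for weak superharmonicity is automatic. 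I would resolve this as in the proof of Theorem \ref{wegHardin}: replace $\rho$ by $\rho_\varepsilon:=\rho+\varepsilon$, which leaves $d\rho$, hence $\nabla\rho$, $\Delta\rho$ and all identities above unchanged, carry out the computation with $v_\varepsilon:=u\rho_\varepsilon^{-1/2}\in W^{1,2}$, and let $\varepsilon\to0^+$ using Fatou and monotone/dominated convergence; the infimum defining $\Theta(\Omega)$ passes from $C^1_0$ to such Sobolev competitors by density.

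Finally, for non-attainment, suppose $\mathfrak{C}_{2,0}(\Omega)=1/4$ is achieved by some $u_0\in D^{1,2}(\Omega)\setminus\{0\}$. Then equality holds in the basic Hardy inequality $\int_\Omega\max\{F^{*2}(\pm du_0)\}\,d\mathfrak{m}=\tfrac14\int_\Omega\tfrac{u_0^2}{\rho^2}F^2(\nabla\rho)\,d\mathfrak{m}$, so in the integrated form of the pointwise estimate above (before invoking $\Theta$) the two non-negative surplus terms must both vanish; in particular $\int_\Omega\rho\min\{F^{*2}(\pm dv_0)\}\,d\mathfrak{m}=0$. Since $\rho>0$ a.e. and $\min\{F^{*2}(\pm\xi)\}=0$ forces $\xi=0$, this gives $dv_0=0$ a.e., so $v_0=u_0\rho^{-1/2}$ is constant on the connected set $\Omega$ and $u_0=c\,\rho^{1/2}$ for some $c\neq0$. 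But then $\rho^{1/2}=c^{-1}u_0\in D^{1,2}(\Omega)$, contradicting the hypothesis $\rho^{1/2}\notin D^{1,2}(\Omega)$. Hence $\mathfrak{C}_{2,0}(\Omega)$ is not achieved.
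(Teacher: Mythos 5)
Your proposal is correct and follows essentially the same route as the paper: the same ground-state decomposition $du=\xi+\zeta$ with $v=u\rho_\varepsilon^{-1/2}$, the same branchwise application of (\ref{ineq}) on $\{v>0\}$ and $\{v<0\}$ (which is exactly where the $\Lambda_F^{-1}$ factor enters), the same disposal of the cross term $\tfrac12\langle\nabla\rho,d(v^2)\rangle$ via weak superharmonicity, and the same $\varepsilon\to0^+$ limit. Your non-attainment argument is a slightly more explicit version of the paper's (which simply notes that the remainder $\Lambda_F^{-1}\int_\Omega\rho F^{*2}(d|v|)\,d\mathfrak{m}$ is strictly positive since $v$ cannot be constant when $\rho^{1/2}\notin D^{1,2}(\Omega)$), but the substance is identical.
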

\begin{proof}
Given $u\in C^1_0(\Omega)$ and $\varepsilon\in (0,1)$, set $\rho_\varepsilon:=\rho+\varepsilon$ and $v:=u/\rho^{\frac12}_\varepsilon$. Let $\Omega_0:=\{x\in \Omega:\, v(x)=0\}$, $\Omega_+:=\{x\in \Omega:\, v(x)>0\}$, $\Omega_-:=\{x\in \Omega:\, v(x)<0\}$.
Set $\xi:=\frac12 v \rho_\varepsilon^{-\frac12}d\rho_\varepsilon$ and $\eta:=\rho_\varepsilon^{\frac12} dv$. Thus, $du=\xi+\eta$.
Using (\ref{ineq}), on ${\Omega_+}\cup {\Omega_0}$ we have
\begin{align*}
\max\{F^{*2}(\pm du)\}\geq F^{*2}(du)\geq \frac14 v^2 {\rho_\varepsilon}^{-1}F^{*2}(d{\rho_\varepsilon})+  vg^*_{d{\rho_\varepsilon}}(d{\rho_\varepsilon},dv)+\frac{{\rho_\varepsilon}}{\Lambda_F}F^{*2}(dv),
\end{align*}
which yields
\begin{align*}
&\int_{{\Omega_+}\cup {\Omega_0}}\max\{F^{*2}(\pm du)\} d\mathfrak{m}-\frac14\int_{{\Omega_+}\cup {\Omega_0}}  \frac{u^2}{{\rho_\varepsilon}^2}F^{*2}(d{\rho_\varepsilon}) d\mathfrak{m}\\
\geq &\frac12\int_{{\Omega_+}\cup {\Omega_0}}\langle\nabla{\rho}, dv^2\rangle d\mathfrak{m}+\frac{1}{\Lambda_F}\int_{{\Omega_+}\cup {\Omega_0}}{\rho_\varepsilon}F^{*2}\left( dv  \right)d\mathfrak{m}.\tag{4.14}\label{2.15}
\end{align*}
A similar argument on ${\Omega_-}$ furnishes
\begin{align*}
&\int_{{\Omega_-}}\max\{F^{*2}(\pm du)\} d\mathfrak{m}-\frac14\int_{{\Omega_-}}  \frac{u^2}{{\rho_\varepsilon}^2}F^{*2}(d{\rho_\varepsilon}) d\mathfrak{m}\\
\geq&\frac12\int_{{\Omega_-}}\langle\nabla{\rho}, dv^2\rangle d\mathfrak{m}+\frac{1}{\Lambda_F}\int_{{\Omega_-}}{\rho_\varepsilon} F^{*2}\left( -dv  \right)d\mathfrak{m},
\end{align*}
which together with
 (\ref{2.15}) and $-\Delta\rho\geq 0$ (in the weak sense) yields
\begin{align*}
&\int_{\Omega}\max\{F^{*2}(\pm du)\} d\mathfrak{m}-\frac14\int_{\Omega}  \frac{u^2}{{\rho_\varepsilon}^2}F^{*2}(d{\rho}) d\mathfrak{m}\\
\geq&\frac{1}{\Lambda_F}\int_{\Omega}{\rho_\varepsilon} F^{*2}\left( d|v|  \right)d\mathfrak{m}\geq
\frac{1}{\Lambda_F}\int_{\Omega}{\rho} \min\{F^{*2}\left(\pm dv  \right)\}d\mathfrak{m}\geq \frac{\Theta(\Omega)}{\Lambda_F}\int_{\Omega} \frac{\rho}{\rho_\varepsilon}u^2d\mathfrak{m} .\tag{4.15}\label{3.22new}
\end{align*}
Note that Remark \ref{th1rem} implies $F^2(\nabla\rho)/\rho^2\in L^1_{\lo}(\Omega)$ and hence,
\begin{align*}
\left|  \frac{u^2}{{\rho_\varepsilon}^2}F^{*2}(d{\rho})  \right|&\leq \frac{u^2}{{\rho}^2}F^{*2}(d{\rho})\in L^1(\Omega),
\end{align*}
which together with Lebesgue's dominated convergence theorem  and (\ref{3.22new}) yields (\ref{new3.16p2}).

Now suppose $\rho^{1/2}\notin D^{1,2}(\Omega)$ but $\mathfrak{C}_{2,0}(\Omega)=1/4$. Thus, from (\ref{3.22new}) and Lebesgue's dominated convergence theorem, we have
\begin{align*}
\int_{\Omega}\max\{F^{*2}(\pm du)\} d\mathfrak{m}-\frac14\int_{\Omega}  \frac{u^2}{{\rho}^2}F^{*2}(d{\rho}) d\mathfrak{m}\geq \frac{1}{\Lambda_F}\int_{\Omega}{\rho} F^{*2}(d|v|)d\mathfrak{m}>0,
\end{align*}
which implies the nonexistence of minimizers in $D^{1,2}(\Omega)$.
\end{proof}


\appendix

\section{Two lemmas}\label{Aapp}

\begin{lemma}\label{lpschcom}
Let  $(M,o,F,d\mathfrak{m})$, $\Omega$, $p,\beta$   be as in Definition \ref{DefDS}. If $u$ is a globally Lipschitz function on $M$  with compact support in $\Omega$, then $u\in D^{1,p}(\Omega,r^{p+\beta})$.
\end{lemma}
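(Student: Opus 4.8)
The plan is to realize $u$ as a $\|\cdot\|_D$-limit of mollifications lying in $C^\infty_0(\Omega)$. Write $K:=\mathrm{supp}(u)\subset\Omega$, which is compact, and fix an open set $U$ with $K\subset U\subset\subset\Omega$. First I would cover $K$ by finitely many coordinate charts contained in $U$ and, via a subordinate partition of unity, reduce to mollifying compactly supported Lipschitz functions in Euclidean charts with the standard mollifier $\phi_\varepsilon$; pulling back and summing produces, for all small $\varepsilon>0$, functions $u_\varepsilon\in C^\infty_0(U)\subset C^\infty_0(\Omega)$ enjoying three properties: (a) $u_\varepsilon\to u$ uniformly; (b) a uniform Lipschitz bound, so that $F^{*}(du_\varepsilon)\le L$ a.e. on $U$ with $L$ independent of $\varepsilon$ (comparing $F^{*}$ with a Euclidean norm on the compact set $\overline U$); and (c) $du_\varepsilon\to du$ pointwise a.e., since by Rademacher's theorem $u$ is differentiable a.e. and the convolved gradients converge at every Lebesgue point of $du$.

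Next I would record that both weights are integrable over $U$. If $o\notin\Omega$ (which, by the standing hypothesis in Definition \ref{DefDS}, covers all cases with $\beta\le -n$), then $r$ is bounded below by a positive constant on $\overline U$, so $r^\beta$ and $r^{\beta+p}$ are bounded there. If instead $o\in\Omega$, then necessarily $\beta>-n$, hence $-\beta<n$ and also $-(\beta+p)<n$ because $p>1$; Lemma \ref{centerinteg}(1) then gives $r^\beta,\,r^{\beta+p}\in L^1_{\lo}(\Omega)$, and in particular both are integrable over $U$. This is exactly where the assumption ``$o\notin\Omega$ if $\beta\le -n$'' and the comparison Lemma \ref{centerinteg} enter.

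With the approximants and the integrability in hand, I would estimate the two parts of $\|u_\varepsilon-u\|_D^p$ separately. For the zeroth-order term, since $u_\varepsilon-u$ is supported in $U$ and $u_\varepsilon\to u$ uniformly,
\[
\int_\Omega r^\beta|u_\varepsilon-u|^p\,d\mathfrak{m}\le \|u_\varepsilon-u\|_{\infty}^p\int_U r^\beta\,d\mathfrak{m}\longrightarrow 0.
\]
For the first-order term, reversibility of $F$ gives $F^{*}(-\xi)=F^{*}(\xi)$, so the triangle inequality for the Minkowski norm $F^{*}$ yields $F^{*}(du_\varepsilon-du)\le F^{*}(du_\varepsilon)+F^{*}(du)\le 2L$ a.e. on $U$. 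Hence $r^{\beta+p}F^{*p}(du_\varepsilon-du)\le (2L)^p\,r^{\beta+p}$, an integrable dominating function on $U$ by the previous paragraph, while $F^{*p}(du_\varepsilon-du)\to0$ a.e. by (c) and continuity of $F^{*}$. Lebesgue's dominated convergence theorem then gives
\[
\int_\Omega r^{\beta+p}F^{*p}(du_\varepsilon-du)\,d\mathfrak{m}\longrightarrow 0.
\]
Combining the two limits yields $\|u_\varepsilon-u\|_D\to0$, so $u$ lies in the $\|\cdot\|_D$-closure of $C^\infty_0(\Omega)$, that is, $u\in D^{1,p}(\Omega,r^{p+\beta})$.

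The main obstacle I anticipate is not the convergence estimates, which are routine once the approximants exist, but the construction of $u_\varepsilon$ carrying simultaneously a uniform Lipschitz bound and a.e.-convergent differentials, together with the correct handling of the possibly singular weight $r^{\beta+p}$ near $o$. The uniform Lipschitz bound is precisely what upgrades the mere $L^1_{\lo}$-convergence of gradients into a dominated-convergence argument valid against an unbounded weight: without it one could not control $\int_U r^{\beta+p}F^{*p}(du_\varepsilon-du)\,d\mathfrak{m}$ in the regime $-n<\beta+p<0$, where the weight blows up at $o$.
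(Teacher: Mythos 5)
Your argument is correct, and it reaches the conclusion by a genuinely different route than the paper. The paper also localizes to charts via a partition of unity, but it then treats $(\eta_k u)\circ\phi_k^{-1}$ merely as an element of the unweighted Sobolev space $W^{1,pq}(\mathbb{B}_{\mathbf{0}}(1))$ for an auxiliary exponent $q>1$ chosen so that $\beta q/(q-1)>-n$, invokes Meyers--Serrin to get smooth approximants in $W^{1,pq}$, and finally uses H\"older's inequality with exponents $q$ and $q/(q-1)$ to decouple the (possibly singular) weight from the convergence: the factor $\bigl(\int_{U_k}r^{\beta q/(q-1)}d\mathfrak{m}\bigr)^{(q-1)/q}$ is finite by Lemma \ref{centerinteg}, while the other factor tends to zero. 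You instead exploit the specific structure of mollification --- it preserves the Lipschitz constant and its gradients converge a.e. --- to get a uniform $L^\infty$ bound on $F^*(du_\varepsilon)$, and then run dominated convergence directly against the integrable weight $r^{\beta+p}$. Your route avoids both the auxiliary exponent and the H\"older step and is arguably more elementary, at the price of needing approximants with a uniform Lipschitz bound rather than arbitrary Sobolev approximants; the paper's H\"older trick is more robust in that it would work for any $W^{1,pq}$-convergent sequence. Two small points worth making explicit if you write this up: (i) the uniform bound $F^*(du_\varepsilon)\le L$ on $U$ is obtained chartwise from the Euclidean Lipschitz bounds via the two-sided comparison of $F^*$ with the Euclidean norms on the compact set $\overline{U}$, exactly as in the paper's estimate (\ref{newA.1constant}); and (ii) reversibility of $F$ (built into Definition \ref{DefDS}) is what lets you write $F^*(du_\varepsilon-du)\le F^*(du_\varepsilon)+F^*(du)$, as you note. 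Your case analysis on the position of $o$ relative to $\Omega$ and the use of Lemma \ref{centerinteg}(1) to integrate $r^\beta$ and $r^{\beta+p}$ near $o$ when $\beta>-n$ matches the role these hypotheses play in the paper's proof.
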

\begin{proof}
Since $\text{supp}(u)$ is compact, there exist  a coordinate covering   $\{(U_k,\phi_k)\}_{k=1}^{N<\infty}$ of $\text{supp}(u)$ and a constant $C\geq 1$ such that for each $k$,  $U_k\subset\subset \Omega$, $\phi_k(U_k)=\mathbb{B}_{\mathbf{0}}(1)$ and
\begin{align*}
C^{-1} d\vol\leq d\mathfrak{m}|_{U_k}\leq C d\vol,\ \ \  C^{-1}\leq \frac{F^*(\omega)}{\|\phi^{-1}_{k*}\omega\|}\leq C, \ \text{ for any }\omega\in T^*{U_k}\backslash\{0\},\tag{A.1}\label{newA.1constant}
\end{align*}
where $d\vol$ and $\|\cdot\|$ are the Lebesgue measure and the Euclidean norm on the unit  ball $\mathbb{B}_{\mathbf{0}}(1)$, respectively.

Choose a number $q>1$ such that $\beta q/({q-1})>-n$ if  $\beta>-n$.  By Lemma \ref{centerinteg} and the construction above, one can easily verify $\int_{U_k}r^{{\beta q}/({q-1})} d \mathfrak{m}<\infty$ for each $k$.

On the other hand,
let $\{\eta_k\}$ be a smooth partition of unity subordinate to $\{U_k\}$. Thus,  $(\eta_ku)\circ\phi_k^{-1}$ is a globally Lipschitz function on $\mathbb{B}_{\mathbf{0}}(1)$ with respect to the Euclidean distance and hence, $(\eta_ku)\circ\phi_k^{-1}$ belongs to the  Sobolev space $ W^{1,pq}(\mathbb{B}_{\mathbf{0}}(1))$.
Meyers-Serrin's theorem then yields a sequence $v_{k_j}\in C_0^\infty(\mathbb{B}_{\mathbf{0}}(1))$ with $\lim_{j\rightarrow+\infty}\|v_{k_j}-(\eta_ku)\circ\phi_k^{-1}\|_{W^{1,pq}(\mathbb{B}_{\mathbf{0}}(1))}=0$. Therefore, we have $v_{k_j}\circ\phi_k\in C^\infty_0(\Omega)$ with $\text{supp}(v_{k_j}\circ\phi_k)\subset U_k$, which together with the H\"older inequality and (\ref{newA.1constant}) implies
\begin{align*}
\int_\Omega|v_{k_j}\circ\phi_k-(\eta_ku)|^p r^{\beta} d\mathfrak{m}&\leq \left(C\int_{\mathbb{B}_{\mathbf{0}}(1)} |v_{k_j}-(\eta_ku)\circ \phi^{-1}_k |^{pq} d \vol\right)^{\frac1q} \left(  \int_{U_k}r^{\frac{\beta q}{q-1}} d \mathfrak{m}\right)^{\frac{q-1}q}\\
&\leq C^{\frac1q}\|v_{k_j}-(\eta_ku)\circ\phi_k^{-1}\|^p_{W^{1,pq}(\mathbb{B}_{\mathbf{0}}(1))}\left(  \int_{U_k}r^{\frac{\beta q}{q-1}} d \mathfrak{m}\right)^{\frac{q-1}q}\rightarrow 0, \text{ as }j\rightarrow+\infty.
\end{align*}
Similarly, one can prove $\lim_{j\rightarrow+\infty}\int_\Omega F^{*p}(d(v_{k_j}\circ\phi_k)-d(\eta_ku)) r^{\beta+p} d\mathfrak{m}=0$. Therefore, $\|v_{k_j}\circ\phi_k-(\eta_ku)\|_D\rightarrow 0$ and $(\eta_ku)\in D^{1,p}(\Omega,r^{p+\beta})$. We conclude the proof by $u=\sum_{k=1}^N(\eta_ku)$.
\end{proof}

\begin{lemma}\label{compinft}

Let $(M,o,F,d\mathfrak{m})$ be an $n$-dimensional forward complete PFMMM with
 $\mathbf{Ric}_\infty\geq (n-1)K$ and $\mathbf{S}_o^+\geq -a$, where $a\geq 0$.  Set $\frac{\pi}{\sqrt{K}}:=+\infty$ if $K\leq 0$. Let $(t,y)$ denote the polar coordinate system around $o$.
Then we have
\[
\Delta t\leq (n-1)\frac{\mathfrak{s}'_K(t)}{\mathfrak{s}_K(t)}+a,\ \text{ for any }y\in S_oM,\  0<t<\min\left\{i_y, \frac{\pi}{2\sqrt{K}} \right\},
\]
which implies
\[
\hat{\sigma}_o(t,y)\leq e^{-\tau(y)+at}\mathfrak{s}^{n-1}_{K}(t),\ \text{ for any }y\in S_oM,\  0<t<\min\left\{i_y, \frac{\pi}{2\sqrt{K}} \right\}.
\]
where $\hat{\sigma}_o(t,y)$ is defined as in (\ref{formulavolume}) and $\mathfrak{s}_K(t)$ is the solution of $f''+K f=0$ with $f(0)=0$ and $f'(0)=1$. Hence,
\[
\mathfrak{m}(B^+_o(r))\leq \int_{S_oM}e^{-\tau(y)}d\nu_o(y)\int_0^{r} e^{at}\mathfrak{s}^{n-1}_{K}(t)dt, \text{ for } 0<r<\min\left\{i_y, \frac{\pi}{2\sqrt{K}} \right\}.
\]
\end{lemma}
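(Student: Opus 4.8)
The plan is to reduce the statement to a one-dimensional Riccati comparison along a fixed unit-speed minimal geodesic $\gamma_y$ from $o$ and then integrate. Fix $y\in S_oM$ and work on $(0,i_y)$, where $t\mapsto\gamma_y(t)$ is minimizing and the polar density $\hat\sigma_o(t,y)$ from \eqref{formulavolume} is smooth and positive. By the identity $\Delta t=\partial_t\log\hat\sigma_o(t,y)$ recorded just before \eqref{v14-2.1}, I set $\phi(t):=\Delta t$ (the \emph{weighted} mean curvature of the forward geodesic sphere). The first step is to isolate the measure-dependent part: writing $\mathbf{m}(t)$ for the \emph{un-weighted} mean curvature associated with $g_{\dot\gamma_y}$, one has along $\gamma_y$ the splitting $\phi=\mathbf{m}-\mathbf{S}(\dot\gamma_y(t))$, the Finsler analogue of $\Delta_f r=\Delta r-\partial_r f$.

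Next I would invoke the standard Finsler Riccati inequality for $\mathbf{m}$ (Shen \cite{Sh1}, Wu--Xin \cite{WX}, in the framework of Ohta--Sturm \cite{Ot}), namely $\mathbf{m}'(t)\leq-\tfrac{\mathbf{m}^2(t)}{n-1}-\mathbf{Ric}(\dot\gamma_y(t))$, which follows from the matrix Riccati equation for the shape operator together with $\mathrm{tr}(H^2)\geq(\mathrm{tr}\,H)^2/(n-1)$. Since $\mathbf{Ric}_\infty(\dot\gamma_y)=\mathbf{Ric}(\dot\gamma_y)+\tfrac{d}{dt}\mathbf{S}(\dot\gamma_y)$ while $\phi'=\mathbf{m}'-\tfrac{d}{dt}\mathbf{S}(\dot\gamma_y)$, the derivative of the $S$-curvature cancels and, using $\mathbf{m}=\phi+\mathbf{S}$, one obtains the weighted Riccati inequality
\[
\phi'(t)\leq-\frac{(\phi+\mathbf{S})^2}{n-1}-\mathbf{Ric}_\infty(\dot\gamma_y(t))\leq-\frac{(\phi+\mathbf{S})^2}{n-1}-(n-1)K,
\]
where $\mathbf{S}=\mathbf{S}(\dot\gamma_y(t))$.

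Now I would absorb the $S$-curvature. Because $\gamma_y$ is minimal from $o$ and $\mathbf{S}_o^+\geq-a$, we have $\mathbf{S}(\dot\gamma_y(t))\geq-a$, so setting $w:=\phi-a$ gives $\phi+\mathbf{S}\geq w$; hence on the set $\{w\geq0\}$ the scalar Riccati inequality $w'\leq-\tfrac{w^2}{n-1}-(n-1)K$ holds, with equality for the model $m_K(t):=(n-1)\mathfrak{s}'_K(t)/\mathfrak{s}_K(t)$. A comparison argument then yields $w(t)\leq m_K(t)$ on $(0,\min\{i_y,\pi/(2\sqrt K)\})$, precisely the range where $m_K>0$: from the Jacobi-field expansion of the exponential map, $\phi-m_K\to0$ as $t\to0^+$, so $w-m_K\to-a\leq0$; and if $w-m_K$ ever became positive, then at a first crossing point (where $w=m_K>0$, so $\phi+\mathbf{S}\geq w>0$ and the Riccati inequality is legitimate) the differential inequality $(w-m_K)'\leq-\tfrac{w+m_K}{n-1}(w-m_K)$ together with an integrating-factor barrier forces a contradiction. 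This gives the pointwise bound $\Delta t=\phi\leq m_K+a=(n-1)\mathfrak{s}'_K/\mathfrak{s}_K+a$ (see also Yin \cite{Y} for a related statement).

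Finally, rewriting the pointwise estimate as
\[
\partial_t\log\frac{\hat\sigma_o(t,y)}{e^{at}\,\mathfrak{s}_K^{n-1}(t)}=\Delta t-a-(n-1)\frac{\mathfrak{s}'_K(t)}{\mathfrak{s}_K(t)}\leq0
\]
shows that $\hat\sigma_o(t,y)/(e^{at}\mathfrak{s}_K^{n-1}(t))$ is non-increasing in $t$; its limit as $t\to0^+$ equals $e^{-\tau(y)}$ by \eqref{v14-2.1} and $\mathfrak{s}_K(t)/t\to1$, so $\hat\sigma_o(t,y)\leq e^{-\tau(y)+at}\mathfrak{s}_K^{n-1}(t)$. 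Integrating this over $t\in(0,r)$ and $y\in S_oM$ against $d\nu_o$ via \eqref{formulavolume} produces the stated volume bound. The hard part will be the ODE comparison of the third paragraph: one must confine the scalar Riccati inequality to $\{w\geq0\}$ (equivalently to the vicinity of a first crossing, where $m_K>0$ guarantees it) and correctly pin down the singular behaviour at $t\to0^+$; the remaining steps are the routine reduction to a Riccati inequality and a monotonicity integration.
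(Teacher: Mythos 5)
Your proposal is correct in substance and rests on the same two ingredients as the paper's proof in Appendix \ref{Aapp} --- the splitting $\Delta t=H(t)-\mathbf{S}(t)$ of the weighted mean curvature (your $\phi=\mathbf{m}-\mathbf{S}$) and the unweighted Riccati inequality $H'\leq -H^2/(n-1)-\mathbf{Ric}$ --- but you run the comparison differently. You absorb the $S$-curvature pointwise, passing to $w:=\Delta t-a$, showing $w'\leq -w^2/(n-1)-(n-1)K$ on $\{w\geq 0\}$, and comparing $w$ with the model $m_K=(n-1)\mathfrak{s}'_K/\mathfrak{s}_K$ by a first-crossing barrier argument. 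The paper instead follows Wei--Wylie: it multiplies the Riccati inequality by $\mathfrak{s}_K^2$, integrates from $0$ to $t$ (the factor $\mathfrak{s}_K^2\to 0$ kills the singular boundary term automatically), and then integrates $\int_0^t\mathfrak{s}_K^2\,\frac{d}{ds}\mathbf{S}\,ds$ by parts, using $\mathbf{S}\geq -a$ together with $(\mathfrak{s}_K^2)'\geq 0$ --- which is exactly where the restriction $t<\pi/(2\sqrt{K})$ enters. The integrated form buys a cleaner treatment of precisely the two points you flag as ``the hard part'': no case analysis on the sign of $w$, and no delicate matching at $t\to 0^+$.

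Two corrections to your version. First, the claim that $\phi-m_K\to 0$ as $t\to 0^+$ is not right: only the unweighted part satisfies $\mathbf{m}-m_K\to 0$, so in fact $\phi-m_K\to -\mathbf{S}(y)$. Fortunately the hypothesis $\mathbf{S}(y)\geq -a$ still gives $w-m_K\to -\mathbf{S}(y)-a\leq 0$, which is all the barrier argument needs, so the slip is harmless here. Second, in the borderline case $\mathbf{S}(y)=-a$ this limit is exactly $0$ and there need not be a first crossing at a positive time; one must then run the integrating factor all the way down to $t=0$ and exploit its blow-up (both $w$ and $m_K$ behave like $(n-1)/t$), or equivalently multiply by $\mathfrak{s}_K^2$ --- at which point you have essentially rediscovered the paper's computation. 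The final monotonicity step, $\partial_t\log\bigl(\hat{\sigma}_o(t,y)e^{-at}\mathfrak{s}_K^{1-n}(t)\bigr)\leq 0$ together with the limit $e^{-\tau(y)}$ from (\ref{v14-2.1}), matches the paper exactly.
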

\begin{proof}
The proof is similar to that of Wei and Wylie \cite[Theorem 1.1]{WW22}. First, fix $y\in S_oM$ and set
\[
H(t):=\frac{\partial}{\partial t}\log\sqrt{\det g_{\nabla t}}, \ \tau(t):=\tau(\nabla t),\  \mathbf{S}(t):=\mathbf{S}(\nabla t)=\frac{d}{dt}\tau(t).
\]
A standard argument (cf. Wu \cite[(4.5)]{Wu2}) yields $\Delta t=H(t)-\mathbf{S}(t)$ and
\[
\frac{d}{d t}H\leq -\mathbf{Ric}(\nabla t)-\frac{H^2}{n-1}.\tag{A.2}\label{A.1}
\]
Also set $H_K(t):=(n-1)\frac{\mathfrak{s}'_K(t)}{\mathfrak{s}_K(t)}$.
By (\ref{A.1}), one has
\begin{align*}
&\frac{d}{d t}\left(H(t)\mathfrak{s}^2_K(t)\right)\leq 2 \mathfrak{s}'_K(t)\mathfrak{s}_K(t)H(t)-\mathfrak{s}^2_K(t)\left( \mathbf{Ric}(\nabla t)+\frac{H^2}{n-1} \right)\\
=&-\left( \frac{\mathfrak{s}_K(t) H(t)}{\sqrt{n-1}}-\sqrt{n-1}\,\mathfrak{s}'_K(t) \right)^2+(n-1)(\mathfrak{s}'_K(t))^2-\mathfrak{s}^2_K(t)\mathbf{Ric}(\nabla t)\\
\leq& (n-1)(\mathfrak{s}'_K(t))^2-\mathfrak{s}^2_K(t)\left( \mathbf{Ric}_\infty(\nabla t)-\frac{d}{dt}\mathbf{S}(t) \right)\\
\leq&(n-1)(\mathfrak{s}'_K(t))^2-(n-1)K\mathfrak{s}^2_K(t)+\mathfrak{s}^2_K(t)\frac{d}{dt}\mathbf{S}(t)\\
=&\frac{d}{dt}\left(H_K(t)\mathfrak{s}^2_K(t)\right)+\mathfrak{s}^2_K(t)\frac{d}{dt}\mathbf{S}(t),
\end{align*}
Since $\mathbf{S}_O\geq -a$,   integrating by parts on the above inequality, we get
\begin{align*}
&\mathfrak{s}^2_K(t)\Delta t=\mathfrak{s}^2_K(t)(H(t)-\mathbf{S}(t))\leq\mathfrak{s}^2_K(t)H_K(t)-\int^t_0\frac{d}{ds}(\mathfrak{s}^2_K(s)) \,\mathbf{S}(s) ds\tag{A.3}\label{A.2}\\
\leq & \mathfrak{s}^2_K(t)H_K(t)+a\int^t_0\frac{d}{ds}(\mathfrak{s}^2_K(s))ds=\mathfrak{s}^2_K(t)H_K(t)+a\mathfrak{s}^2_K(t).
\end{align*}
Hence, $\Delta t\leq H_K(t)+a$, which implies
\begin{align*}
\frac{\partial}{\partial t}\log \hat{\sigma}_o(t,y)=\Delta t\leq (n-1)\frac{\mathfrak{s}'_K(t)}{\mathfrak{s}_K(t)}+a=\frac{\partial}{\partial t}\log \left[  e^{at}\mathfrak{s}^{n-1}_K(t) \right].
\end{align*}
Then the estimates of $\hat{\sigma}_o(t,y)$ and $\mathfrak{m}(B^+_o(r))$  follow from a standard argument (cf. Zhao et al. \cite{ZS}).
\end{proof}

\begin{remark}By a different method, Yin \cite{Y} obtained the theorem above   in the case when the PFMMM is equipped by the Busemann-Hausdroff measure and satisfies $\mathbf{Ric}_\infty\geq (n-1)K$ and  $\mathbf{S}_o^+\geq -a$ ($a>0$).
\end{remark}

\section{Weighted Sobolev space}\label{SObespa}
Let $(M,o,F,d\mathfrak{m})$ be an $n$-dimensional {\it closed} reversible RFMMM and set $r(x):=d_F(o,x)$.
Given $p\in (1,+\infty)$ and $\beta<-n$ with $p+\beta>-n$, by Lemma \ref{centerinteg},  we define a norm on $C^\infty_0(M)=C^\infty(M)$ as
\[
\|u\|_{p,\beta}:=\left( \int_{{M}}|u|^p{r}^{p+\beta} d\mathfrak{m}+\int_{{M}} F^{*p}(d u) {r}^{p+\beta} d\mathfrak{m}\right)^{\frac1p}.
\]
The {\it weighted Sobolev space} $W^{1,p}({M}, {r}^{p+\beta})$  is defined as
\[
W^{1,p}({M}, {r}^{p+\beta}):=\overline{{C}^\infty_0({M})}^{\|\cdot\|_{p,\beta}}.
\]
In particular, $W^{1,p}(M,r^0)=:W^{1,p}(M)$, i.e.,  the standard Sobolev space
in the sense of Hebey \cite[Definition 2.1]{H}.

We also define the weighted $L^p$-space $L^p(M,r^{p+\beta})$ (resp., $L^p(TM,r^{p+\beta})$) as the completion of $C^\infty(M)$ (resp., $\Gamma^\infty(T^*M)$, i.e., the space of the smooth sections of the cotangent bundle) under the norm
\[
[u]_{p,\beta}:=\left( \int_{{M}}|u|^p{r}^{p+\beta} d\mathfrak{m}\right)^{\frac1p} \ \ \left(\text{resp., }  [\omega]_{p,\beta}:=\left( \int_{{M}}F^{*p}(\omega){r}^{p+\beta} d\mathfrak{m}\right)^{\frac1p} \right).
\]
And set $L^p(M):=L^p(M,r^0)$ and $L^p(TM):=L^p(TM,r^0)$.

\begin{lemma}\label{Soleweakder}
If $u\in W^{1,p}({M},{r}^{p+\beta})$, then $u\in W^{1,1}(M)$. Moreover, the differential $\varpi$ of $u$ in $W^{1,p}({M},{r}^{p+\beta})$ is the distributional derivative of $u$, i.e.,  $\varpi\in L^1(TM)$ and
\[
\int_{{M}} \langle X,\varpi\rangle d\mathfrak{m}=-\int_{{M}}u \di X d\mathfrak{m},\ \text{ for any smooth vector field }X.
\]
\end{lemma}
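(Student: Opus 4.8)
The plan is to show that the weight $r^{p+\beta}$ is mild enough that the weighted $L^p$-control built into $\|\cdot\|_{p,\beta}$ forces genuine $L^1$-control, and then to identify the abstract completion limit with a distributional derivative. First I would record the two H\"older estimates that drive everything: for any $f\in C^\infty(M)$ and any $\omega\in\Gamma^\infty(T^*M)$,
\[
\int_M |f|\, d\mathfrak{m}\le C\,[f]_{p,\beta},\qquad \int_M F^*(\omega)\, d\mathfrak{m}\le C\,[\omega]_{p,\beta},
\]
with $C:=\left(\int_M r^{-\frac{p+\beta}{p-1}}\,d\mathfrak{m}\right)^{\frac{p-1}{p}}$. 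Both follow by writing $|f|=\big(|f|\,r^{\frac{p+\beta}{p}}\big)\,r^{-\frac{p+\beta}{p}}$ and applying H\"older's inequality with exponents $p$ and $p/(p-1)$ (and likewise with $F^*(\omega)$ in place of $|f|$). The point is that $C$ is finite: since $\beta<-n$ one checks $\frac{p+\beta}{p-1}<n$, so Lemma \ref{centerinteg}(1) together with the compactness of $M$ gives $\int_M r^{-\frac{p+\beta}{p-1}}\,d\mathfrak{m}<\infty$. This finiteness, and the arithmetic step $\beta<-n\Rightarrow \frac{p+\beta}{p-1}<n$, is the one place where the hypothesis on $\beta$ is really used, and I regard it as the technical heart of the lemma.

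Next I would exploit these estimates at the level of the defining Cauchy sequence. By construction $u$ is the limit of a sequence $u_j\in C^\infty(M)$ that is Cauchy for $\|\cdot\|_{p,\beta}$, with $u_j\to u$ in $L^p(M,r^{p+\beta})$ and $du_j\to\varpi$ in $L^p(TM,r^{p+\beta})$. Applying the two estimates to $f=u_j-u_k$ and $\omega=du_j-du_k$ shows that $(u_j)$ is Cauchy in $W^{1,1}(M)$; since the latter is complete, $u_j$ converges there to some $\bar u\in W^{1,1}(M)$ whose distributional derivative is the $L^1(TM)$-limit $\bar\varpi$ of $du_j$. To see $\bar u=u$ and $\bar\varpi=\varpi$ almost everywhere, I would note that convergence in $L^p(M,r^{p+\beta})$ and convergence in $L^1(M)$ both imply convergence in measure on $M$ (the weight $r^{p+\beta}$ is positive and finite off the single point $o$, a null set, so on each compact subset of $M\setminus\{o\}$ the weight is bounded below by a positive constant), and limits in measure are unique; the same argument identifies the differentials. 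Hence $u=\bar u\in W^{1,1}(M)$ and $\varpi=\bar\varpi\in L^1(TM)$.

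Finally I would verify the integration-by-parts identity by passing to the limit in the smooth case. For each $u_j$ and each smooth vector field $X$ the product rule $\di(u_jX)=\langle X,du_j\rangle+u_j\,\di X$, combined with $\int_M\di(u_jX)\,d\mathfrak{m}=0$ on the closed manifold $M$ (this is (\ref{olddivlem}) with empty boundary), yields $\int_M\langle X,du_j\rangle\,d\mathfrak{m}=-\int_M u_j\,\di X\,d\mathfrak{m}$. Using $|\langle X,du_j-\varpi\rangle|\le F^*(du_j-\varpi)\,F(X)$ from (\ref{2.2newineq}) together with reversibility, and the boundedness of $F(X)$ and $\di X$ on the compact $M$, the left-hand side converges to $\int_M\langle X,\varpi\rangle\,d\mathfrak{m}$ while the right-hand side converges to $-\int_M u\,\di X\,d\mathfrak{m}$, which is the asserted identity. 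Beyond the weight estimate, the only conceptual subtlety I would flag is the well-definedness of $\varpi$: a priori an element of the completion is merely a pair $(u,\varpi)$, and the argument above is exactly what shows $\varpi$ is intrinsically determined by $u$ as its distributional derivative.
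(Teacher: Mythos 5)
Your proposal is correct and follows essentially the same route as the paper: the identical H\"older estimate against $\bigl(\int_M r^{\frac{p+\beta}{1-p}}\,d\mathfrak{m}\bigr)^{\frac{p-1}{p}}$ (finite by Lemma \ref{centerinteg} since $\frac{p+\beta}{p-1}<n$), followed by passing to the limit along the defining smooth approximating sequence in the integration-by-parts identity on the closed manifold. The only difference is presentational: you identify the completion limit with the function via convergence in measure before verifying the identity, whereas the paper verifies the identity directly and notes the $W^{1,1}$-convergence at the end; this is a harmless reordering.
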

\begin{proof}Since $({\beta+p})/({1-p})>-n$, Lemma \ref{centerinteg} implies that  ${r}^{\frac{\beta+p}{1-p}}$ is integrable.
Given $f\in L^p({M},{r}^{\beta+p})$, the H\"older inequality yields
\begin{align*}
\int_M|f| d\mathfrak{m}&=\int_M |f|{r}^{\frac{\beta+p}{p}}{r}^{-\frac{\beta+p}{p}}d\mathfrak{m}\leq \left( \int_M |f|^p {r}^{p+\beta}d\mathfrak{m} \right)^{\frac1p}\left( \int_M {r}^{\frac{p+\beta}{1-p}} d\mathfrak{m} \right)^{\frac{p-1}p}. \tag{B.1}\label{newappedix}
\end{align*}
Consequently, if $u\in W^{1,p}({M},{r}^{p+\beta})$,   (\ref{newappedix}) implies $u\in L^{1}({M})$ and its differential $\varpi\in L^{1}(T{M})$.
On the other hand,
there exist a sequence $u_j\in C^\infty_0({M})$ such that $[u_j-u]_{p,\beta}\rightarrow0$ and $[du_j-\varpi]_{p,\beta}\rightarrow0$. Thus, for any smooth vector field $X$,  (\ref{newappedix}) together with the compactness of $M$ yields
\begin{align*}
&\left| \int_{{M}} \langle  X,\varpi\rangle-(-u\di X) d\mathfrak{m} \right|=\left| \int_{{M}}  \langle  X,\varpi\rangle- \langle X,du_j\rangle+ \langle X,du_j\rangle -(-u\di X) d\mathfrak{m}  \right|\\
\leq &\int_M \left|\langle  X, \varpi-du_j\rangle \right|d\mathfrak{m}+ \int_M \left|(u_j-u)\di X \right|d\mathfrak{m}\\
\leq &\max_M F(X) \int_{M}F^*(\varpi-du_j)d\mathfrak{m} +\max_M|\di X| \int_{M}|u_j- u|d\mathfrak{m} \\
\leq &\left(\max_M F(X)+\max_M|\di X|\right)\left( \int_M {r}^{\frac{p+\beta}{1-p}} d\mathfrak{m} \right)^{\frac{p-1}p}\left([\varpi -d u_j]_{p,\beta}^p+[u_j -u]_{p,\beta}^p \right)\rightarrow 0.
\end{align*}
Furthermore, (\ref{newappedix}) also implies that $u_j\rightarrow u$ in $W^{1,1}({M})$ and hence, the lemma follows.
\end{proof}


\begin{lemma}\label{maxminle}
If $u\in W^{1,p}({M},{r}^{\beta+p})$, then $u_+:=\max\{u,0\}$, $u_-:=-\min\{u,0\}$ and $|u|=u_+-u_-$ are all in $W^{1,p}({M},{r}^{\beta+p})$.
\end{lemma}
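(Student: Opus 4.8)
The plan is to reduce the whole statement to the single claim that $u_+\in W^{1,p}(M,r^{\beta+p})$. Since the manifold is reversible we have $F^*(-\xi)=F^*(\xi)$, so the norm $\|\cdot\|_{p,\beta}$ is invariant under $u\mapsto -u$; hence $-u\in W^{1,p}(M,r^{\beta+p})$ and $u_-=(-u)_+$ follows from the claim applied to $-u$. As $W^{1,p}(M,r^{\beta+p})$ is a complete vector space and $|u|=u_++u_-$ (the $u_+-u_-$ in the statement is a misprint, since that equals $u$), the assertion for $|u|$ is then immediate by linearity. Throughout I will use, via Lemma \ref{Soleweakder}, that $W^{1,p}(M,r^{\beta+p})$ embeds isometrically as a complete, hence closed, subspace of $L^p(M,r^{\beta+p})\times L^p(TM,r^{\beta+p})$ through $v\mapsto(v,dv)$, the second component being the distributional derivative. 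I will also fix once and for all a family $G_\epsilon\in C^\infty(\mathbb R)$ with $G_\epsilon\equiv0$ on $(-\infty,0]$, $0\le G_\epsilon'\le1$, $|G_\epsilon(t)-t_+|\le\epsilon$, and $G_\epsilon'\to\chi_{(0,\infty)}$ pointwise; such $G_\epsilon$ exist by integrating a smooth cut-off of $\chi_{(0,\infty)}$.

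\textbf{Step 1 (smooth $u$).} For $w\in C^\infty(M)$ the functions $G_\epsilon(w)$ are again smooth, $G_\epsilon(w)\to w_+$ uniformly and hence in $L^p(M,r^{\beta+p})$ (the weight is integrable by Lemma \ref{centerinteg}, as $p+\beta>-n$), and the chain rule gives $d(G_\epsilon(w))=G_\epsilon'(w)\,dw$. By reversibility $F^{*p}\!\left(G_\epsilon'(w)\,dw-\chi_{\{w>0\}}\,dw\right)=|G_\epsilon'(w)-\chi_{\{w>0\}}|^p\,F^{*p}(dw)$. The classical fact that $dw=0$ a.e. on the level set $\{w=0\}$ makes this integrand vanish a.e. there, while off $\{w=0\}$ one has $G_\epsilon'(w)\to\chi_{\{w>0\}}$ pointwise; dominated convergence (with dominant $2^p\,F^{*p}(dw)\,r^{\beta+p}\in L^1$) then yields $d(G_\epsilon(w))\to\chi_{\{w>0\}}\,dw$ in $L^p(TM,r^{\beta+p})$. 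Thus $(G_\epsilon(w),d(G_\epsilon(w)))\to(w_+,\chi_{\{w>0\}}\,dw)$, so $w_+\in W^{1,p}(M,r^{\beta+p})$ with differential $\chi_{\{w>0\}}\,dw$.

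\textbf{Step 2 (general $u$).} Choose $u_j\in C^\infty(M)$ with $\|u_j-u\|_{p,\beta}\to0$. By Step 1 each $(u_j)_+\in W^{1,p}(M,r^{\beta+p})$, and since $t\mapsto t_+$ is $1$-Lipschitz, $[(u_j)_+-u_+]_{p,\beta}\le[u_j-u]_{p,\beta}\to0$. For the differentials I would write $\chi_{\{u_j>0\}}du_j-\chi_{\{u>0\}}du=\chi_{\{u_j>0\}}(du_j-du)+(\chi_{\{u_j>0\}}-\chi_{\{u>0\}})du$. The first term tends to $0$ in $L^p(TM,r^{\beta+p})$ because $|\chi_{\{u_j>0\}}|\le1$ and $du_j\to du$. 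For the second term I pass to a subsequence along which $u_j\to u$ a.e.; then $\chi_{\{u_j>0\}}\to\chi_{\{u>0\}}$ pointwise on $\{u\neq0\}$, while on $\{u=0\}$ we have $du=0$ a.e. (the level-set fact applied to $u\in W^{1,1}(M)$, available from Lemma \ref{Soleweakder}), so dominated convergence forces the second term to $0$ along this subsequence. Since the limit $(u_+,\chi_{\{u>0\}}du)$ does not depend on the subsequence, the standard subsequence argument upgrades this to convergence of the whole sequence $(u_j)_+\to(u_+,\chi_{\{u>0\}}du)$ in $L^p(M,r^{\beta+p})\times L^p(TM,r^{\beta+p})$. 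As $W^{1,p}(M,r^{\beta+p})$ is closed in this product and contains every $(u_j)_+$, the limit $u_+$ lies in $W^{1,p}(M,r^{\beta+p})$, completing the proof.

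The hard part will be the convergence of the gradient terms in Step 2: the indicators $\chi_{\{u_j>0\}}$ need not converge on the zero level set $\{u=0\}$, and this is handled precisely by the fact that the differential of a Sobolev function vanishes almost everywhere on each of its level sets, combined with the a.e.-convergent subsequence extraction. Reversibility is what allows the clean identity $F^{*p}(c\,\xi)=|c|^p\,F^{*p}(\xi)$ for every real scalar $c$, which is used implicitly in both steps.
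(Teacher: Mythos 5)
Your proof is correct, but it takes a genuinely different route from the paper's. For the smooth case, the paper chooses $q>1$ with $q(\beta+p)/(q-1)>-n$, observes via H\"older and Lemma \ref{centerinteg} that $W^{1,pq}(M)$-convergence implies convergence in the weighted norm, and then simply cites the classical approximation of $u_+$ by smooth functions in the unweighted space $W^{1,pq}(M)$ (Hebey); you instead smooth $t\mapsto t_+$ by hand with the functions $G_\epsilon$ and identify the distributional differential of $w_+$ as $\chi_{\{w>0\}}\,dw$ explicitly. For the passage to general $u$, the paper disposes of the matter in one line via $\max\{s,t\}=\frac12(s+t-|s-t|)$ and the claim $\|u_{j+}-u_+\|_{p,\beta}\le\|u_j-u\|_{p,\beta}$, which controls the zero-order term but silently glosses over the gradient term; your decomposition $\chi_{\{u_j>0\}}(du_j-du)+(\chi_{\{u_j>0\}}-\chi_{\{u>0\}})du$, combined with a.e.\ subsequence extraction and the vanishing of the weak differential on the level set $\{u=0\}$, supplies exactly the missing justification, so your argument is the more careful of the two at this step. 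The trade-off is that you import the (standard but unproved-in-the-paper) Stampacchia-type fact that the weak derivative of a $W^{1,1}$ function vanishes a.e.\ on its level sets, while the paper imports Hebey's lemma instead; both are legitimate external ingredients. Your remark that the statement's $|u|=u_+-u_-$ should read $|u|=u_++u_-$ is also correct.
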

\begin{proof}Since $u_-=(-u)_+$,
it suffices to prove $u_+\in W^{1,p}({M},{r}^{\beta+p})$.
Choose a sufficiently large constant $q>1$ such that $\frac{q(\beta+p)}{q-1}>-n$. For any   $f\in L^{pq}(M)$, the H\"older inequality together with Lemma \ref{centerinteg} yields
\begin{align*}
\int_M |f|^p{r}^{\beta+p}d\mathfrak{m}\leq \left( \int_M  |f|^{pq} d\mathfrak{m}\right)^{\frac1q} \left(  \int_M {r}^{\frac{q(\beta+p)}{q-1}}d\mathfrak{m}\right)^{\frac{q-1}{q}}.\tag{B.2}\label{newholderB2}
\end{align*}

First we consider the case when $u\in C^\infty_0({M})$.
The standard theory yields a subsequence $u_j\in C^\infty_0({M})$ such that
$u_j\rightarrow u_+$ in $W^{1,pq}({M})$ (cf. Hebey \cite[Lemma 2.5]{H}),
which together with (\ref{newholderB2}) implies  $u_j\rightarrow u_+$ in $W^{1,p}({M}, {r}^{p+\beta})$. Hence, $u_+\in W^{1,p}({M}, {r}^{p+\beta})$.

For the general case (i.e., $u\in W^{1,p}({M},{r}^{\beta+p})$), choose a sequence $u_j\in C^\infty_0({M})$ such that $\|u_j- u\|_{p,\beta}\rightarrow 0$. From above, we have $u_{j+}=\max\{u_j,0\}\in W^{1,p}({M}, {r}^{p+\beta})$. Since $\max\{s,t\}=\frac12(s+t-|s-t|)$,  the triangle inequality yields $\|u_{j+}-u_+\|_{p,\beta}\leq \|u_j-u\|_{p,\beta}\rightarrow 0$. Hence, $u_+\in W^{1,p}({M},{r}^{\beta+p})$.
\end{proof}

Since $M$ is closed, the following result follows from Lemma \ref{maxminle} directly.
\begin{corollary}\label{corominmax}
Given $u\in W^{1,p}({M},{r}^{\beta+p})$, then $u_\epsilon:=\max\{u-\epsilon,0\}\in W^{1,p}({M},{r}^{\beta+p})$, for any $\epsilon>0$.
\end{corollary}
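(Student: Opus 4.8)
The plan is to reduce the statement directly to Lemma \ref{maxminle} by exploiting the vector space structure of $W^{1,p}(M,r^{\beta+p})$ together with the compactness of $M$. The key observation is the identity $u_\epsilon = (u-\epsilon)_+$, so that once we know $u-\epsilon$ lies in the weighted Sobolev space, Lemma \ref{maxminle} applied to $u-\epsilon$ immediately produces $u_\epsilon = \max\{u-\epsilon,0\}\in W^{1,p}(M,r^{\beta+p})$.

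First I would verify that the constant function $\epsilon$ belongs to $W^{1,p}(M,r^{\beta+p})$. Since $M$ is closed, the constant $\epsilon$ is smooth, hence lies in $C^\infty(M)=C^\infty_0(M)$, and it remains to check that its norm is finite. The gradient term vanishes because $d\epsilon=0$, so $F^{*p}(d\epsilon)=0$, while the zeroth-order term equals $\epsilon^p\int_M r^{\beta+p}\,d\mathfrak{m}$. This last integral is finite: the hypothesis $p+\beta>-n$ guarantees integrability of $r^{\beta+p}$ near $o$ by Lemma \ref{centerinteg}(1) (applied with exponent $k=-(p+\beta)<n$), and the compactness of $M$ bounds $r$ away from $o$, so the contribution on $M\setminus B_o(\epsilon_0)$ is trivially finite. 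Hence $\epsilon\in W^{1,p}(M,r^{\beta+p})$.

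Then, since $W^{1,p}(M,r^{\beta+p})$ is by definition the completion of $C^\infty_0(M)$ under $\|\cdot\|_{p,\beta}$, it is a Banach space and in particular a vector space. Consequently $u-\epsilon\in W^{1,p}(M,r^{\beta+p})$ whenever $u$ does. Applying Lemma \ref{maxminle} to $u-\epsilon$ in place of $u$ gives $(u-\epsilon)_+\in W^{1,p}(M,r^{\beta+p})$, and since $(u-\epsilon)_+=u_\epsilon$ this is exactly the desired conclusion.

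I do not expect a serious obstacle: the argument is a short bookkeeping of which functions are admissible. The only point that genuinely requires care is the membership of the constant $\epsilon$ in the weighted space, and this is precisely where the closedness of $M$ enters — compactness ensures both that constants are admissible test functions (so the translate $u-\epsilon$ stays in the space) and that $r$ is bounded above (so the weight $r^{\beta+p}$ is globally integrable). This explains why the corollary is stated for closed manifolds rather than in the noncompact setting.
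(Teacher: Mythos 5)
Your proof is correct and follows essentially the same route as the paper, which simply states that the corollary ``follows from Lemma \ref{maxminle} directly'' since $M$ is closed; your write-up just makes explicit the two facts that the paper leaves implicit, namely that the constant $\epsilon$ lies in $W^{1,p}(M,r^{\beta+p})$ (via compactness of $M$ and Lemma \ref{centerinteg} with $-(p+\beta)<n$) and that the completion is a vector space, so that Lemma \ref{maxminle} applies to $u-\epsilon$.
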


Now set ${M_o}:=M\backslash\{o\}$.
Define the {\it weighted Sobolev space} $W^{1,p}({M_o},r^{\beta+p})$ as the completion of $C^\infty_0({M_o})$ with respect to the norm
\[
\|u\|_{{M_o},p,\beta}:=\left( \int_{{M_o}}|u|^p{r}^{p+\beta} d\mathfrak{m}+\int_{{M_o}} F^{*p}(d u)^p {r}^{p+\beta} d\mathfrak{m}\right)^{\frac1p}.
\]

\begin{lemma}\label{easylemma1}
If $u\in W^{1,p}({M},{r}^{\beta+p})$ with compact support in ${M_o}$, then $u|_{{M_o}}\in W^{1,p}({M_o},{r}^{\beta+p})$.
\end{lemma}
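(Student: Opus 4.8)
The plan is to realise $u$ as an honest function via Lemma \ref{Soleweakder} and then push an approximating sequence for $u$ into $C^\infty_0(M_o)$ by multiplying with a cutoff that vanishes near $o$; because $u$ already vanishes near $o$, the cutoff only needs to be supported away from the singularity of the weight, so no delicate capacity estimate is required. Concretely, by Lemma \ref{Soleweakder} we have $u\in W^{1,1}(M)$ with distributional derivative $\varpi\in L^1(TM)$, and the hypothesis that $u$ has compact support in $M_o$ yields a radius $\rho_0>0$ with $u\equiv 0$ a.e. on $B_o(\rho_0)$; hence $\varpi\equiv 0$ a.e. on $B_o(\rho_0)$ as well. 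Fix a sequence $u_j\in C^\infty(M)=C^\infty_0(M)$ with $\|u_j-u\|_{p,\beta}\to 0$, and choose a smooth cutoff $\chi:M\to[0,1]$ with $\chi\equiv 0$ on $B_o(\rho_0/2)$ and $\chi\equiv 1$ on $M\backslash B_o(\rho_0)$. Then $\chi u_j\in C^\infty_0(M_o)$, since $\operatorname{supp}(\chi u_j)\subset M\backslash B_o(\rho_0/2)$, a compact subset of $M_o$.

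It remains to show $\|\chi u_j-u\|_{M_o,p,\beta}\to 0$. Split $M_o$ into the exterior $M\backslash B_o(\rho_0)$ and the annulus $A:=B_o(\rho_0)\backslash B_o(\rho_0/2)$ (on $B_o(\rho_0/2)$ both $\chi u_j$ and $u$ vanish). On the exterior $\chi\equiv 1$ and $d\chi\equiv 0$, so there $\chi u_j-u=u_j-u$ and $d(\chi u_j)-\varpi=du_j-\varpi$, and these contributions are dominated by $\|u_j-u\|_{p,\beta}^p\to 0$. On $A$ the point $o$ is excluded and $r$ ranges in $[\rho_0/2,\rho_0]$, so the weight $r^{p+\beta}$ is bounded between two positive constants; moreover $u\equiv\varpi\equiv 0$ on $A$, so $\int_A(|u_j|^p+F^{*p}(du_j))\,r^{p+\beta}\,d\mathfrak{m}=\int_A(|u_j-u|^p+F^{*p}(du_j-\varpi))\,r^{p+\beta}\,d\mathfrak{m}\to 0$. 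Since $0\le\chi\le 1$, the zeroth-order part satisfies $\int_A|\chi u_j|^p r^{p+\beta}\,d\mathfrak{m}\to 0$. For the gradient, the Leibniz rule $d(\chi u_j)=u_j\,d\chi+\chi\,du_j$, the triangle inequality and homogeneity of the reversible norm $F^*$, and the bound $F^*(d\chi)\le C$ on compact $M$ give, on $A$, $F^{*p}(d(\chi u_j))\le 2^{p-1}\big(C^p|u_j|^p+F^{*p}(du_j)\big)$, whence $\int_A F^{*p}(d(\chi u_j))\,r^{p+\beta}\,d\mathfrak{m}\to 0$ as well. Combining the two regions yields $\|\chi u_j-u\|_{M_o,p,\beta}\to 0$, which exhibits $u|_{M_o}$ as a limit of elements of $C^\infty_0(M_o)$ and therefore proves $u|_{M_o}\in W^{1,p}(M_o,r^{\beta+p})$.

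The argument is essentially routine; the one point to watch is that the transition annulus $A$ of the cutoff be placed strictly away from $o$, so that the otherwise singular weight $r^{p+\beta}$ is harmless there (bounded above and below). No capacity-type control near $o$ is needed precisely because $u$ already vanishes in a neighbourhood of $o$ — this is exactly what distinguishes this ``easy'' lemma from the genuinely delicate Proposition \ref{thsob}, where $u$ need not vanish at $o$ and a logarithmic cutoff together with the condition $p+\beta>-n$ becomes essential.
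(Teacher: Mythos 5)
Your proposal is correct and follows essentially the same route as the paper: both proofs multiply an approximating sequence $u_j\in C^\infty_0(M)$ by a smooth cutoff that is identically $1$ on $\operatorname{supp}(u)$ and supported away from $o$, and then control the weighted norm of the difference via the Leibniz rule, the triangle inequality for $F^*$, and the boundedness of $F^*$ of the cutoff's differential on the compact manifold. The paper's version is marginally more streamlined -- it bounds $\|\eta u_j-\eta u\|_{M_o,p,\beta}^p$ globally by a constant times $\|u_j-u\|_{p,\beta}^p$ without splitting into regions -- but this is a cosmetic difference, not a different argument.
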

\begin{proof}
Since $\text{supp}(u)\subset {M_o}$ is compact, one can choose a cut-off function $\eta\in C^\infty_0({M})$ such that $\text{supp}(u)\subsetneqq \text{supp}(\eta)\subset {M_o}$ and $\eta|_{\text{supp}(u)}=1$.

On the other hand, since $u\in W^{1,p}({M},{r}^{\beta+p})$, there exist a sequence $u_i\in C^\infty_0({M})$ with $\|u_i- u\|_{p,\beta}\rightarrow 0$. Note that if $\|\eta u_i- \eta u\|_{{M_o},p,\beta}\rightarrow0$, then
$u|_{{M_o}}=\eta u\in W^{1,p}({M_o},{r}^{\beta+p})$ and the lemma follows. Hence, it suffices to show $\|\eta u_i- \eta u\|_{{M_o},p,\beta}\rightarrow0$.

A direct calculation together with the triangle inequality (i.e., $F^*(\omega_1+\omega_2)\leq F^*(\omega_1)+F^*(\omega_2)$) furnishes
\begin{align*}
&\|\eta u_i-\eta u\|_{{M_o},p,\beta}^p
= \int_{\text{supp}\eta}|\eta u_i-\eta u|^p {r}^{\beta+p} d\mathfrak{m}+\int_{\text{supp}\eta}F^{*p}\left(d(\eta u_i-\eta u)\right) {r}^{\beta+p}d\mathfrak{m}\\
\leq &\int_{{M}}| u_i- u|^p {r}^{\beta+p} d\mathfrak{m}+2^p\left[ \int_{\text{supp}\eta} F^{*p}\left((u_i- u)d\eta\right) {r}^{\beta+p}d\mathfrak{m}+\int_{\text{supp}\eta}F^{*p}\left(\eta d(u_i-u)\right){r}^{\beta+p}d\mathfrak{m}  \right]\\
\leq &\int_{{M}}| u_i- u|^p {r}^{\beta+p} d\mathfrak{m}+2^p\left[ \|F^{*p}(d\eta)\|_{\infty}\int_{M} |u_i- u|^p {r}^{\beta+p}d\mathfrak{m}+\int_{M}F^{*p}(du_i-du){r}^{\beta+p}d\mathfrak{m}  \right]\\
\leq&\left[2^p (\|F^{*p}(d\eta)\|_{\infty}+1)+1  \right]\|u_i-u\|_{p,\beta}^p\rightarrow 0.
\end{align*}
\end{proof}

\begin{proof}[Proof of Proposition \ref{thsob}]Without loss of generality, we may prove the proposition in the case when $u\geq 0$.
Thus, Lemma \ref{maxminle} implies $u=u_+\in  W^{1,p}({M}, {r}^{\beta+p})\cap C({M})$.

For each $\epsilon\in (0,1)$, set $u_\epsilon(x):=\max\{u-\epsilon,0\}$. Since  $u$ is continuous with $u(o)=0$, there exists a small $\delta>0$ such that $u_\epsilon=0$ in $B_o(\delta)$, which implies that $\text{supp}(u_\epsilon)$ is a compact subset of ${M_o}$. Corollary \ref{corominmax} then yields
 $u_\epsilon|_{{M_o}} \in W^{1,p}({M_o},{r}^{\beta+p})$.
By a direct calculation, we have
\begin{align*}
&\left\|u|_{{M_o}}-u_\epsilon|_{{M_o}}\right\|_{{M_o},p,\beta}^p=\left\|  u-  u_\epsilon   \right\|^p_{p,\beta}=\int_{{M}}|u-u_\epsilon|^p{r}^{p+\beta} d\mathfrak{m} + \int_{{M}} F^{*p}(d (u-u_\epsilon)) {r}^{p+\beta} d\mathfrak{m}\\
\leq & \epsilon^p \int_M {r}^{\beta+p} d\mathfrak{m}+\int_{{M}}\chi_{\{0\leq u\leq \epsilon\}}|u|^p{r}^{\beta+p} d\mathfrak{m} + \int_{{M}} \chi_{\{0\leq u\leq \epsilon\}} F^{*p}(du) {r}^{p+\beta} d\mathfrak{m}.
\end{align*}
Now the assumption together with
the dominated convergence theorem yields
\begin{align*}
&\lim_{\epsilon\rightarrow 0^+}\int_{{M}}\chi_{\{0\leq u\leq \epsilon\}}|u|^p{r}^{\beta+p} d\mathfrak{m}=\int_{{M}}\lim_{\epsilon\rightarrow 0^+}\chi_{\{0\leq u\leq \epsilon\}}|u|^p{r}^{\beta+p} d\mathfrak{m}=0,\\
&\lim_{\epsilon\rightarrow 0^+}\int_{{M}} \chi_{\{0\leq u\leq \epsilon\}} F^{*p}(du) {r}^{p+\beta} d\mathfrak{m}=\int_{{M}} \lim_{\epsilon\rightarrow 0^+}\chi_{\{0\leq u\leq \epsilon\}} F^{*p}(du) {r}^{p+\beta} d\mathfrak{m}=0,
\end{align*}
which imply $\left\|u|_{{M_o}}-u_\epsilon|_{{M_o}}\right\|_{{M_o},p,\beta}^p\rightarrow 0$ as $\epsilon\rightarrow 0^+$ and hence, $u|_{{M_o}}\in  W^{1,p}({M_o},{r}^{\beta+p})$.
\end{proof}

\noindent{\textbf{Acknowledgements}}
 This work was supported by NNSFC (No. 11761058) and NSFS (No. 19ZR1411700).  The author is greatly indebted to Pro. A. Krist\'aly for many
useful discussions and  helpful comments.

\end{document}